\DeclareMathOperator{\R}{\mathbf{R}}
\def\D{\mathcal{D}}
\def\End{\operatorname{End}}
\def\Im{\operatorname{Im}}
\def\Hom{\operatorname{Hom}}
\def\Ext{\operatorname{Ext}}
\def\dualita#1#2{\mathrel{
                 \mathop{\vcenter{
                 \offinterlineskip
                 \hbox to 0.6truecm{\rightarrowfill}
                 \hbox to 0.6truecm{\leftarrowfill}}}%
                 \limits_{#2}^{#1}}}
\DeclareMathOperator{\Add}{Add}
\DeclareMathOperator{\Prod}{Prod}
\DeclareMathOperator{\injdim}{idim}
\DeclareMathOperator{\gen}{gen}
\DeclareMathOperator{\pres}{pres}
\DeclareMathOperator{\Cogen}{Cogen}
\DeclareMathOperator{\Ker}{Ker}
\DeclareMathOperator{\Rej}{Rej}
\DeclareMathOperator{\Coker}{Coker}
\newtheorem{theorem}{Theorem}[section]
\newtheorem{corollary}[theorem]{Corollary}
\newtheorem{definition}[theorem]{Definition}
\newtheorem{example}[theorem]{Example}
\newtheorem{lemma}[theorem]{Lemma}
\newtheorem{proposition}[theorem]{Proposition}
\theoremstyle{remark}
\newtheorem{remark}[theorem]{Remark}
\newcommand*{\rMod}{\textrm{\textup{Mod-}}}
\newcommand*{\lMod}{\textrm{\textup{-Mod}}}
\begin{document}

\title{Derived dualities induced by a 1-cotilting bimodule}
\author{Francesca Mantese, Alberto Tonolo}
\address[F. Mantese]{Dipartimento di Informatica, Universit\`a degli Studi di Verona, strada Le Grazie  15, I-37134 Verona - Italy}
\email{francesca.mantese@univr.it}
\address[A. Tonolo]{ Dipartimento di Matematica, Universit\`a degli studi di Padova, via Trieste 63, I-35121 Padova Italy}
\email{tonolo@math.unipd.it}
\thanks{Research supported by:
Progetto di Eccellenza Fondazione Cariparo "Algebraic structures and their applications: Abelian and derived categories, algebraic entropy
and representation of algebras" and 
Progetto di Ateneo ``Categorie Differenziali Graduate'' CPDA105885}
\subjclass[2010]{18E30, 16D90}
\date{\today}

\begin{abstract}
In this paper we characterize the modules and the complexes involved in  the dualities induced by a 1-cotilting bimodule in terms of a linear compactness condition. Our result generalizes the classical characterization of reflexive modules with respect to Morita dualities. The linear compactness notion considered, permits us to obtain finiteness properties of the rings and modules involved.
\end{abstract}
\maketitle

\section*{Introduction}
Let $R$ and $S$ be two arbitrary associative rings with $1\not=0$. We denote by $R\lMod$ and $\rMod S$ the categories of left $R$-modules and of right $S$-modules.
Morita and Azumaya in \cite{Mor58} and \cite{Azu59}  studied the additive category dualities between two categories of left $R$- and right $S$-modules closed under sub and factor modules, and containing all the finitely generated modules. They proved that these are equivalent to the category dualities given by the contravariant $\Hom$ functors induced by \emph{Morita bimodules}, i.e., bimodules $_RW_S$ such that $_RW$ and $W_S$ are injective cogenerators and $R=\End W_S$, $S=\End{}_RW$. The modules in the domain and in the range of these dualities are called \emph{Morita reflexive}. 

In \cite{Mul70} M\"uller proved that the Morita reflexive
modules  coincide with the \emph{linearly compact} modules.

The 1-cotilting modules generalize injective cogenerators: indeed, they are modules which are injective exactly on the subcategory they cogenerate.  
The cotilting theory studies the dualities induced by the contravariant functors $\Hom_R(-,U)$, $\Hom_S(-,U)$ and $\Ext^1_R(-,U)$, $\Ext^1_S(-,U)$ associated to a (faithfully balanced) 1-cotilting bimodule. 
In 2006 Robert Colby and Kent Fuller in their monograph \cite{ColFul0610} provided a unified approach to the various theories of equivalence and duality between categories of modules developed in the last 50 years, concentrating on those induced by tilting and cotilting modules. The rich bibliography in \cite{ColFul0610} takes into account the contribution in this field of many mathematicians.

Given a 1-cotilting bimodule $_RU_S$, the two pair of contravariant functors
\[\Hom_R(-,U),\ \Hom_S(-,U)\text{ and } \Ext^1_R(-,U),\ \Ext^1_S(-,U)\] play the same role, but their behavior  is not symmetric. This is essentially motivated by the fact that $\Ext^1(\Hom(-,U),U)$ vanishes, while the other composition $\Hom(\Ext^1(-,U),U)$ is in general different from zero. Many papers dedicated to cotilting theory have to handle with this asymmetry, which affects the study of the duality induced by the contravariant functors $\Ext^1_R(-,U)$, $\Ext^1_S(-,U)$. The problem can be bypassed restricting to noetherian rings and finitely generated modules (see e.g. \cite{Miy86}, \cite{Col89}, \cite{Ton04}), since $\Hom(\Ext^1(-,U),U)$ is zero on finitely related modules (see Lemma~\ref{lemma:Bon}).
Otherwise, one can circumscribe his interest to particular classes of modules where $\Hom(\Ext^1(-,U),U)$ is zero, as in \cite{Col00},\cite{Man01} and \cite{Ton00}. 

In this paper we have undertaken a different direction, facing this asimmetry. Recently, in \cite{ManTon10} we have studied the dualities induced by a cotilting bimodule in the framework of the derived categories of modules over an arbitrary associative ring. We think that this is the correct setting for understanding, also at the level of the module categories, the nature of these dualities. Considering the total derived functors $\R\Hom(-,{}_RU)$ and $\R\Hom(-,U_S)$ and their cohomologies is indeed possible to evaluate the interplay of the functors $\Hom_R(-,U)$, $\Hom_S(-,U)$ and $\Ext^1_R(-,U)$, $\Ext^1_S(-,U)$. In particular the vanishing of $\Hom(\Ext^1(-,U),U)$ is better understood and, naturally inserted in the notion of reflexive object, it looses its technical aspect.

The aim of this paper is to characterize in terms of a suitable notion of linear compactness the \emph{$\D$-reflexive complexes}, i.e., the complexes in the domain and in the range of the duality induced by the functors $\R\Hom(-,{}_RU)$ and $\R\Hom(-,U_S)$. In such a way we generalize to cotilting dualities between derived categories the classical M\"uller result. 
In \cite[Corollary 3.6, Example~4.6]{ManTon10} we have proved that a complex is $\D$-reflexive if and only if its cohomologies are $\D$-reflexive. Therefore for characterizing the $\D$-reflexive complexes is sufficient to characterize the $\D$-reflexive stalk complexes, i.e., the $\D$-reflexive modules considered as complexes concentrated in degree 0.

In \cite[Proposition~5.3.7]{ColFul0610}, consequence of results in \cite{Baz03} and \cite{ManRuzTon03}, a characterization of modules in the domain and in the range of the duality induced by the contravariant functors $\Hom_R(-,U)$, $\Hom_S(-,U)$ is given in terms of suitable notion of density and linear compactness. Generalizing a result of Wisbauer \cite[47.7]{Wis91}, we characterize in Theorem~\ref{thm:densita} the bimodules $_RU_S$ for which the density condition is automatically satisfied.
In Theorems~\ref{teo:fond1}, \ref{teo:fondam2} and Corollary~\ref{cor:fond} we characterize the modules in the domain and in the range of the duality induced by the contravariant functors $\Ext^1_R(-,U)$, $\Ext^1_S(-,U)$ employing a suitable notion of linear compactness. 
The linear compactness notion considered, permits us to obtain finiteness properties of the rings and modules involved in the duality.
In particular we extend the Osofsky's result on the impossibility for an infinite direct sum to be involved in a Morita duality to the cotilting case (see Proposition~\ref{prop:Osofsky}), and we prove that a right noetherian ring admitting a cotilting duality is necessarily semiperfect (see Proposition~\ref{prop:semiperfect}). Finally, summarizing what we have obtained in the previous sections, we conclude the paper with the characterization of $\D$-reflexive complexes (see Corollary~\ref{cor:complessiriflessivi}).

\section{Preliminaries and Notation}
Let $R$ and $S$ be two arbitrary associative rings with $1\not=0$. We denote by $R\lMod$ and $\rMod S$ the categories of left $R$-modules and of right $S$-modules, and by $\D(R)$ and $\D(S)$ the corresponding derived categories. 
\begin{definition}\label{definizione:1cotilting}
 A left $R$-module (resp. right $S$-module) $U$ is \textbf{1-cotilting}
 if it satisfies the following conditions: 
\begin{enumerate}
\item $\injdim {}U\leq 1$;
\item $\Ext^1(U^{\alpha}, U)=0$ for each cardinal $\alpha$;
\item if $\Hom(M,U)=0=\Ext^1(M,U)$ then $M=0$.
\end{enumerate}
 A bimodule $_RU_S$ is  \textbf{1-cotilting} if both $_RU$ and $U_S$ are 1-cotilting.
\end{definition}
It can be shown (see \cite[Proposition~2.3]{AngTonTrl01}) that property $(3)$ in Definition~\ref{definizione:1cotilting} can be replaced by
\begin{enumerate}
\item[$(3')$] \emph{denoted by $W$ an injective cogenerator, there is an exact sequence 
\[0\to U_1\to U_0\to W\to 0\]
where $U_1, U_0\in\Prod U$, the subcategory of the direct summands of direct products of copies of $U$.}
\end{enumerate}

Consider the following subcategories associated to a module $L$:
\begin{itemize}
\item $\Cogen L:=\{M\in R\lMod:M\leq L^\alpha\text{ for a suitable cardinal }\alpha\}$;
\item ${}^\perp L:=\{M\in R\lMod:\Ext^1_R(M,L)=0\}$.
\end{itemize}
A module $U$ is 1-cotilting if and only if $\Cogen U={}^\perp U$ (see \cite[Proposition~1.7]{ColDEsTon97}); in such a case $(\Ker\Hom(-,U),\Cogen U)$ is a torsion pair.

An injective cogenerator is a 1-cotilting bimodule. Therefore a Morita bimodule is a faithfully balanced 1-cotilting bimodule. The generalization is effective, for there exist rings with a faithfully balanced 1-cotilting bimodule which do not admit a Morita duality.

\begin{proposition}\label{prop:ereartcot}
Let $R$ be a left hereditary and left artinian ring. Then $_RR_R$ is a faithfully balanced $1$-cotilting bimodule.
\end{proposition}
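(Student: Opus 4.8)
The plan is to check that $_RR_R$ is faithfully balanced — which is automatic — and then that both $_RR$ and $R_R$ are $1$-cotilting in the sense of Definition~\ref{definizione:1cotilting}. Faithful balance is immediate: the canonical ring homomorphisms $R\to\End(R_R)$ and $R\to\End({}_RR)$, given by left and right multiplication, are isomorphisms for the regular bimodule. For the cotilting conditions I would treat the two sides in parallel, so let me describe the left module $U={}_RR$; the right module is handled identically once the needed one-sided finiteness properties are transported to the other side (see the last paragraph). Condition $(1)$, $\injdim{}_RR\le 1$, is exactly left hereditariness. For condition $(2)$ I would argue that every power $R^\alpha$ is already projective, whence $\Ext^1(R^\alpha,R)=0$: indeed $R$ is left perfect (being left artinian) and right coherent (being right hereditary, as explained below), so by Chase's theorem any direct product of projective left modules is projective.

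The crux is the third condition, which I would verify in the equivalent form $(3')$: the existence of an exact sequence $0\to U_1\to U_0\to W\to 0$ with $U_0,U_1\in\Prod({}_RR)$ and $W$ an injective cogenerator. Fix the minimal injective cogenerator $W$ of $R\lMod$. Since $_RR$ is a generator there is an epimorphism $R^{(\beta)}\tra W$ from a free module; composing with the inclusion $R^{(\beta)}\hookrightarrow R^\beta$ into the product and using that $W$ is injective, I would extend it to an epimorphism $\tilde g\colon R^\beta\tra W$. Put $U_0=R^\beta\in\Prod({}_RR)$ and $U_1=\Ker\tilde g$. Left hereditariness makes $U_1$ projective, and over the left perfect ring $R$ every projective module is a direct sum of local direct summands of $R$ (Bass), so $U_1\in\Add({}_RR)$. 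The delicate point — and the main obstacle — is to place $U_1$ in $\Prod({}_RR)$ rather than merely in $\Add({}_RR)$; here I would use that products of projectives are projective together with the Krull--Schmidt--Azumaya uniqueness of the decomposition of a projective over a perfect ring into local modules, which shows that each $R^{(\delta)}$ is a direct summand of $R^\delta$ and hence $\Add({}_RR)\subseteq\Prod({}_RR)$. This yields $(3')$ and, with $(1)$ and $(2)$, that $_RR$ is $1$-cotilting.

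Finally, to pass to $R_R$ I would transport the finiteness hypotheses to the right. A left artinian ring is semiprimary, so by Auslander's theorem its left and right global dimensions coincide; left hereditariness then forces $R$ to be right hereditary as well, and a semihereditary ring is coherent on that side. Thus $R$ is simultaneously left and right hereditary, left and right perfect, and left and right coherent, so the arguments for $(1)$, $(2)$ and $(3')$ apply verbatim to $R_R$. Hence both $_RR$ and $R_R$ are $1$-cotilting and $_RR_R$ is a faithfully balanced $1$-cotilting bimodule. An alternative to checking $(3')$ would be to verify condition $(3)$ directly and then deduce ${}^\perp({}_RR)\subseteq\Cogen({}_RR)$ from the torsion pair $(\Ker\Hom(-,R),\Cogen({}_RR))$: any $M$ with $\Ext^1(M,R)=0$ has torsion part $tM$ lying in $\Ker\Hom(-,R)\cap{}^\perp({}_RR)$, which is zero by $(3)$; I expect condition $(3)$ itself to be the same essential difficulty repackaged.
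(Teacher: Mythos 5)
Your proof is correct, but it takes a genuinely different route from the paper's. The paper argues right side first: $R$ left artinian makes $R$ perfect, Small's theorem \cite{Sma67} upgrades left hereditary to right hereditary (you get the same via Auslander's equality of left and right global dimensions over semiprimary rings — both work), and then, instead of your Chase-theorem argument, the paper invokes product completeness: since $_RR$ has finite length, $R_R$ is product complete by Krause--Saor\'{\i}n \cite{KraSao98}, i.e.\ $\Prod R_R=\Add R_R$, which delivers conditions $(2)$ and $(3')$ in one stroke; to return to the left side the paper deduces that the $1$-cotilting module $R_R$ is $\Sigma$-pure-injective, gets right coherence from \cite{ColManTon10}, and applies Krause--Saor\'{\i}n again (right coherent $+$ left perfect $\Rightarrow$ ${}_RR$ product complete). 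You replace this machinery by Chase's theorem (perfect $+$ coherent on the appropriate sides $\Rightarrow$ products of projectives are projective) plus Bass's structure theory and a Krull--Schmidt--Azumaya multiplicity count giving $\Add\subseteq\Prod$; notably, your observation that right (semi)hereditary rings are right coherent short-circuits the paper's detour through $\Sigma$-pure-injectivity, so your argument is more elementary and self-contained, at the price of handling $\Prod=\Add$ by hand where the paper quotes it. Two points to tighten: first, $\injdim({}_RR)\le 1$ is \emph{implied by}, not ``exactly,'' left hereditariness (any self-injective non-hereditary ring refutes the converse); second, the claim that $R^{(\delta)}$ is a direct summand of $R^\delta$ needs the multiplicity count made explicit, e.g.\ as follows: writing the projective module $R^\delta$ as a direct sum of principal indecomposables, one has $\operatorname{rad}(R^\delta)\subseteq J^\delta$, so the semisimple top of $R^\delta$ maps onto $(R/J)^\delta$, which contains every simple with multiplicity at least $\delta$; hence each principal indecomposable occurs in $R^\delta$ with multiplicity at least $\delta$ and Azumaya's uniqueness theorem then yields $R^{(\delta)}\leq^{\oplus}R^\delta$ — this works on both sides and is exactly the inclusion $\Add\subseteq\Prod$ that product completeness encodes for the paper. (Your closing alternative via condition $(3)$ and the torsion pair is indeed the same difficulty repackaged, and as stated it presupposes that $(\Ker\Hom(-,R),\Cogen R)$ is already a torsion pair, which the paper only guarantees once $R$ is known to be cotilting; it is good that you did not rely on it.)
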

\begin{proof} Since $R$ is perfect (\cite[Corollary~28.8]{AndFul74}), left hereditary implies right hereditary \cite[Corollary 2]{Sma67}, hence $\injdim (R_R)\leq 1$. Since $_RR$ is of finite length, then $R_R$ is product complete \cite[Theorem 4.1]{KraSao98}, i.e., $\Prod R_R=\Add R_R$, where $\Add R_R$ denotes the 
subcategory of the direct summands of direct sums of copies of $U$.
Thus $\Ext^i_R(\Prod R_R, R_R)=\Ext^i_R(\Add R_R, R_R)=0$ and for any injective cogenerator $W$ in $\rMod R$ there exists an exact sequence $0\to R_1\to R_0 \to W\to 0$ with $R_1, R_0\in \Add R_R=\Prod R_R$. This implies that $R_R$ is a  $1$-cotilting module and that it is $\Sigma$-pure injective \cite[Lemma~1.2.23]{GobTrl06}; thus $R$ is right coherent \cite[Corollary 5.4]{ColManTon10}. Since $R$ is right coherent and left perfect, $_RR$ is product complete by \cite[Proposition~3.9]{KraSao98}; applying again the above argument, we get that also $_RR$ is $1$-cotilting. 
\end{proof}

\begin{example}\label{ex:noMorita}
Consider the following example proposed by W.~Xue \cite[Remark~2.9]{Xue92}. Let $D$ be a division ring admitting a division subring $C$ such that $\dim (D_C)$ is finite but $\dim ({_CD})$ is not. As proved by $Xue$, the triangular matrix ring $R=\left(\begin{array}{cc} D & D \\ 0 & C \end{array}\right)$  is  hereditary and artinian (on both sides), and $R$ does not admit a Morita duality. By Proposition~\ref{prop:ereartcot} the regular bimodule $_RR_R$ is a faithful balanced $1$-cotilting bimodule.  
\end{example}

In the sequel let us fix a 1-cotilting bimodule $_RU_S$;
we denote by $\Delta_R$ and $\Delta_S$ (or simply by $\Delta$ when the ring is clear from the context) the contravariant functors $\Hom_R(-,U)$ and $\Hom_S(-,U)$, by $\Gamma_R$ and $\Gamma_S$ (or simply by $\Gamma$) the contravariant functors $\Ext^1_R(-,U)$ and $\Ext^1_S(-,U)$, and by $\R\Delta_R$ and $\R\Delta_S$ (or simply by $\R\Delta$)  the derived functors of $\Delta_R$ and $\Delta_S$ between the derived categories $\D(R)$ and $\D(S)$.
We often denote by $\Delta^2$ both the compositions $\Delta_S\circ\Delta_R$ and $\Delta_R\circ\Delta_S$, by $\Gamma^2$ both the compositions $\Gamma_S\circ\Gamma_R$ and $\Gamma_R\circ\Gamma_S$, and by $(\R\Delta)^2$ both the compositions $\R\Delta_S\circ\R\Delta_R$ and $\R\Delta_R\circ\R\Delta_S$.
Since $\Im\Delta\subseteq\Cogen U\subseteq  U^\perp$, it is $\R(\Delta^2)=\R\Delta\circ\R\Delta=:(\R\Delta)^2$ \cite[Proposition~5.4]{Har66}.

 The pair of functors $(\Delta_R,\Delta_S)$ is right adjoint with units the evaluation maps
\[\delta:\text{id}_{R\lMod}\to \Delta^2:=\Delta_S\Delta_R,\quad
\delta:\text{id}_{\rMod S}\to \Delta^2:=\Delta_R\Delta_S\]
defined setting for each module $M$
\[\xymatrix@R=1pt{
\delta_M:M\ar[r]&\Delta^2 M\\
m\ar@{|->}[r]&\widetilde m:\Delta M\ar[r]&U\\
&{}\phi\ar@{|->}[r]&\widetilde m(\phi):=\phi(m)
}\]
Along this paper we will define the notions of $\Delta$-reflexive, $\Delta$-torsionless, $\Delta$-torsion, $\Delta$-dense, $\Delta$-linearly compact modules; all these are known in the literature as $U$-reflexive, $U$-torsionless, $U$-torsion, $U$-dense, $U$-linearly compact modules. We make this choice since in the sequel other notions of reflexivity, torsionless, torsion, density, linear compactness will be associated to the bimodule $_RU_S$.

\begin{definition}
A module $M$ is 
\begin{itemize}
\item \emph{$\Delta$-reflexive} if $\delta_M$ is an isomorphism;
\item \emph{$\Delta$-torsionless} if $\delta_M$ is a monomorphism;
\item \emph{$\Delta$-torsion} if $\Delta M=0$. 
\end{itemize}
\end{definition}
The classes of $\Delta$-torsion and $\Delta$-torsionless modules coincide with the torsion and torsionless modules associated to the torsion pair
$(\Ker\Delta, \Cogen U=\Ker\Gamma)$.

Let $M$ be a module. Since $\Delta(\delta_{M})\circ\delta_{\Delta(M)}=1_{\Delta(M)}$, then
\begin{itemize}
\item $\Delta(M)$ is $\Delta$-torsionless;
\item  if $M$ is $\Delta$-reflexive, also $\Delta(M)$ is $\Delta$-reflexive.
\end{itemize}

For each module $M$ we denote by $\Rej_U M$ the intersection $\bigcap\{\Ker f:f\in\Delta M\}$.
The $\Rej_U M$ and $M/\Rej_U M$ are the torsion part and the torsionless part of the module $M$ with respect to the torsion pair $(\Ker\Delta, \Ker\Gamma)$.

Since $\Im\Delta\subseteq\Cogen U=\Ker\Gamma$, we have 
$\Gamma\Delta^\ell=0$ for each $\ell\geq 1$. The same is not true in general interchanging the role of the functors $\Delta$ and $\Gamma$ (see Example~\ref{ex:Gabri}).

By \cite[Lemma~13.6]{Kel96} also the total derived functors $\R\Delta_R$ and $\R\Delta_S$ form a right adjoint pair. Let us denote by $\eta$ both the units
\[\eta:\text{id}_{\D(R)}\to (\R\Delta)^2:=\R\Delta_S\R\Delta_R,\quad \eta:\text{id}_{\D(S)}\to (\R\Delta)^2:=\R\Delta_R\R\Delta_S\]
defined setting for each complex $X^\bullet$
\[\xymatrix{X^\bullet\ar[d]^{\eta_{X^\bullet}\qquad=}&.... \ar[r]&P_{-1}\ar[d]^{\delta_{P_{-1}}}\ar[r]& P_0\ar[d]^{\delta_{P_{0}}}\ar[r]& P_1\ar[d]^{\delta_{P_{1}}}\ar[r]& ...\\
(\R\Delta)^2(X^\bullet)&...\ar[r]&\Delta^2(P_{-1})\ar[r]&\Delta^2(P_{0})\ar[r]&\Delta^2(P_{1})\ar[r]&...}\]
where $pX^\bullet:=.... P_{-1}\to P_0\to P_1\to ...$ is a homotopically projective resolution of $X^\bullet$.
\begin{definition}
A complex $X^\bullet$ is 
\begin{itemize}
\item \emph{$\D$-reflexive} if $\eta_{X^\bullet}$ is an isomorphism, i.e. if it induces isomorphisms between the cohomologies; 
\item \emph{$\D$-torsionless} if $\eta_{X^\bullet}$ induces monomorphisms between the cohomologies.
\end{itemize}
A module $M$ is
\begin{itemize}
\item \emph{$\D$-reflexive} or \emph{$\D$-torsionless} if it is $\D$-reflexive or $\D$-torsionless considered as a complex concentrated in degree 0.
\end{itemize}
\end{definition}

Let $X^\bullet$ be a complex. Since $\R\Delta(\eta_{X^\bullet})\circ\eta_{\R\Delta(X^\bullet)}=1_{\R\Delta(X^\bullet)}$, then
\begin{itemize}
\item $\R\Delta(X^\bullet)$ is $\D$-torsionless;
\item  if $X^\bullet$ is $\D$-reflexive, also $\R\Delta(X^\bullet)$ is $\D$-reflexive.
\end{itemize}

\begin{theorem}[Corollary 3.6, Example~4.6 \cite{ManTon10}]\label{two:coomologie}
Let $_RU_S$ be a 1-cotilting bimodule. A complex is $\D$-reflexive if and only if its cohomologies are $\D$-reflexive.
\end{theorem}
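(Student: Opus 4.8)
The plan is to reduce an arbitrary complex to its cohomology stalk complexes by means of the canonical truncation triangles, and to transport the reflexivity condition across these triangles. Two structural facts make this possible and are already available: $(\R\Delta)^2=\R\Delta\circ\R\Delta$ is a covariant triangulated functor, so it carries distinguished triangles to distinguished triangles, and $\eta\colon\mathrm{id}\to(\R\Delta)^2$ is a natural transformation, so every morphism of complexes produces a morphism of triangles whose vertical arrows are the relevant units. The quantitative input is the cohomological amplitude forced by $\injdim U\le 1$: for a module $M$ the complex $\R\Delta M$ is concentrated in degrees $0,1$ (with cohomologies $\Delta M$ and $\Gamma M$), whence $(\R\Delta)^2M$ is concentrated in degrees $-1,0$. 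Here $H^{1}((\R\Delta)^2M)=\Gamma\Delta M=0$ because $\Im\Delta\subseteq\Cogen U=\Ker\Gamma$, while $H^{-1}((\R\Delta)^2M)=\Delta\Gamma M$ is the single ``bad'' term. In particular $(\R\Delta)^2$ spreads cohomology by at most one degree in each direction.

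I first treat bounded complexes by induction on the number of nonzero cohomologies. Let $X^\bullet$ have cohomology in degrees $[a,b]$ and consider the truncation triangle $H^a(X^\bullet)[-a]\to X^\bullet\to\tau_{\ge a+1}X^\bullet\xrightarrow{+1}$. Applying $(\R\Delta)^2$ and $\eta$ gives a morphism of distinguished triangles with outer units $\eta_{H^a(X^\bullet)[-a]}$ and $\eta_{\tau_{\ge a+1}X^\bullet}$ and middle unit $\eta_{X^\bullet}$. For the implication ``each cohomology reflexive $\Rightarrow$ $X^\bullet$ reflexive'' this finishes the induction at once: the stalk is reflexive by hypothesis, $\tau_{\ge a+1}X^\bullet$ is reflexive by induction, so since two of the three vertical maps are isomorphisms, $\eta_{X^\bullet}$ is one as well.

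For the converse I use the amplitude bound at the lowest degree. Reading the long exact cohomology sequence of the lower triangle in degree $a-1$, and using that $\eta_{X^\bullet}$ is an isomorphism with $H^{a-1}(X^\bullet)=0$, together with the fact that $(\R\Delta)^2(\tau_{\ge a+1}X^\bullet)$ is concentrated in degrees $\ge a$, I obtain $\Delta\Gamma H^a(X^\bullet)=H^{-1}((\R\Delta)^2H^a(X^\bullet))=0$; hence $(\R\Delta)^2H^a(X^\bullet)$ is a stalk in degree $0$. In degree $a$ the same long exact sequence shows that the map $H^0((\R\Delta)^2H^a(X^\bullet))\to H^a((\R\Delta)^2X^\bullet)$ is a monomorphism; on the other hand commutativity of the corresponding square with the isomorphism $\eta_{X^\bullet}$, together with the fact that $\iota\colon H^a(X^\bullet)[-a]\to X^\bullet$ induces the identity on $H^a$, shows that this map is also an epimorphism, hence an isomorphism. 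A last chase in that square then forces $\eta_{H^a(X^\bullet)}$ itself to be an isomorphism, so $H^a(X^\bullet)$ is reflexive. The five-lemma property of the triangle now propagates reflexivity to $\tau_{\ge a+1}X^\bullet$, and the induction continues, yielding reflexivity of every cohomology.

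It remains to remove the boundedness hypothesis, and this is where I expect the real difficulty to lie. Since $(\R\Delta)^2$ moves cohomology by at most one degree in each direction, both $H^n(X^\bullet)$ and the map induced on it by $\eta_{X^\bullet}$ are unchanged after replacing $X^\bullet$ by a finite window $\tau_{\ge n-k}\tau_{\le n+k}X^\bullet$ with $k$ large enough, the truncated-off pieces contributing nothing in the relevant degrees; naturality of $\eta$ guarantees these identifications are compatible. Feeding the bounded result into such windows then lets one verify the equivalence one cohomology at a time. The delicate points to be checked are the spectral-sequence identifications of the extreme cohomologies of $(\R\Delta)^2$ (in particular $H^{a-1}((\R\Delta)^2X^\bullet)=\Delta\Gamma H^a(X^\bullet)$ and the behaviour of the edge morphisms), the compatibility of the window comparison maps with $\eta$ at the boundary degrees of the window, and the convergence issues arising when infinitely many cohomologies are nonzero; this last point, rather than the homological bookkeeping, is the genuine obstacle.
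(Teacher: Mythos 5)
You should first note a structural point: this paper does not prove the statement at all — it is imported verbatim from \cite{ManTon10} (Corollary 3.6 and Example 4.6), so the only possible comparison is with that cited source, which likewise proceeds by d\'evissage along canonical truncations. Your route is therefore the natural one, and for \emph{bounded} complexes your argument is correct and complete: the amplitude computation ($(\R\Delta)^2M$ concentrated in degrees $-1,0$, with $H^{-1}=\Delta\Gamma M$ and $H^{1}=\Gamma\Delta M=0$ since $\Im\Delta\subseteq\Cogen U=\Ker\Gamma$), the two-out-of-three property of $\eta$ along the truncation triangle $H^a(X^\bullet)[-a]\to X^\bullet\to\tau_{\ge a+1}X^\bullet$, and the two chases in degrees $a-1$ and $a$ that extract $\Delta\Gamma H^a(X^\bullet)=0$ and bijectivity of $\eta^0_{H^a(X^\bullet)}$ (whence $\D$-reflexivity of $H^a(X^\bullet)$ by Proposition~\ref{prop:cottheorem}) are all sound.

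The gap is the unbounded case, which you explicitly leave as a plan — and your diagnosis of where the difficulty sits is inverted. Convergence is a non-issue: the crude bounds $\R\Delta(\D^{\ge m}(R))\subseteq\D^{\le 1-m}(S)$ and $\R\Delta(\D^{\le p}(R))\subseteq\D^{\ge -p}(S)$, fed through two truncation triangles, give a natural isomorphism $H^n((\R\Delta)^2X^\bullet)\cong H^n\bigl((\R\Delta)^2\,\tau_{\ge n-1}\tau_{\le n+1}X^\bullet\bigr)$, so each degree is controlled by a width-three window and no spectral sequence or limit argument is needed; in particular the implication ``cohomologies reflexive $\Rightarrow$ complex reflexive'' goes through exactly as you sketch. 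The genuine obstacle is the converse at the window \emph{boundary}: from $\eta_{X^\bullet}$ being an isomorphism you cannot simply ``feed the bounded result'' to a window $W$, because truncation creates the fresh boundary term $H^m((\R\Delta)^2\tau_{\ge m+1}X^\bullet)\cong\Delta\Gamma H^{m+1}(X^\bullet)$, and $\eta_W$ is not a priori an isomorphism in the extreme degrees of $W$ — so your bounded induction cannot even start on $W$. The repair is to show that $\tau_{\le m}X^\bullet$ inherits reflexivity, by the very chase you used in degree $a$: in the long exact sequence the map $H^m((\R\Delta)^2\tau_{\le m}X^\bullet)\to H^m((\R\Delta)^2X^\bullet)$ is mono (the preceding term vanishes by amplitude) and epi (it is hit by the isomorphism $H^m(\eta_{X^\bullet})$ through the naturality square, the truncation map being an isomorphism on $H^m$); this simultaneously forces $\Delta\Gamma H^{m+1}(X^\bullet)=0$ and makes $\eta_{\tau_{\le m}X^\bullet}$ an isomorphism, provided you also establish the refined bound $(\R\Delta)^2(\D^{\le m})\subseteq\D^{\le m}$ (not the crude $\D^{\le m+1}$), which follows because the bottom cohomology $\Delta H^m$ of $\R\Delta\,\tau_{\le m}X^\bullet$ lies in $\Cogen U=\Ker\Gamma$. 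Two-out-of-three then transfers reflexivity to $\tau_{\ge m+1}X^\bullet$, and your bounded case applied to $\tau_{\ge n-1}\tau_{\le n+1}X^\bullet$ finishes. So the theorem is provable by your own methods, but as written the last third of your text names the checkpoints without passing them, and mislabels the harmless one (convergence) as the hard one.
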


Therefore for characterizing the $\D$-reflexive complexes is sufficient to characterize the $\D$-reflexive modules. In \cite[Theorem~3.10]{ManTon10} we have proved that the $\D$-reflexive modules form an exact subcategory of the whole module category.

We can associate with each short exact sequence of left $R$-modules  \[0\to A\to B\to C\to 0\] a triangle $A\to B\to C\to A[1]$ in $\D(R)$.
 Applying $\R\Delta^2$ to this triangle and considering the corresponding long exact sequence of cohomologies we get the following commutative diagram
\[
\xymatrix{&0\ar[d]^{\eta^{-1}_C}\ar[r]& A\ar[d]^{\eta^{0}_A}\ar[r]& B\ar[d]^{\eta^{0}_B}\ar[r]& C\ar[d]^{\eta^{0}_C}\ar[r]& 0\\
...\ar[r]&H^{-1}\R\Delta^2 C\ar[r]&H^0\R\Delta^2 A\ar[r]& H^0\R\Delta^2 B\ar[r]& H^0\R\Delta^2 C\ar[r]& 0
}
\]
where $\eta^i_X$ denote the module morphism
\[H^i(\eta_X): H^i X\to H^i \R\Delta^2 X\]
for each left $R$-module $X$.
\begin{proposition}\cite[Theorem~1.2, Proposition~1.3]{Ton00}\cite[Proposition~4.2]{ManTon10}
\label{prop:cottheorem}
If $_RU_S$ is a 1-cotilting bimodule, then for each module $M$
we have the following diagram with exact row
\[\xymatrix{&&M\ar[d]^{\eta^0_M}\\
0\ar[r]&\Gamma^2 M\ar[r]^-{\alpha_M}& H^0\R\Delta^2(M)\ar[r]^-{\beta_M}& \Delta^2 M\ar[r]& 0}\]
with $\beta_M\circ\eta^0_M$ equal to the evaluation map $\delta_M$. Moreover
\[H^i\R\Delta^2(M)=\begin{cases}
      0& \text{if }i\not=0,-1, \\
      \Delta\Gamma M& \text{if }i=-1.
\end{cases}\]
Then a module $M$ is $\D$-reflexive if and only if 
\begin{itemize}
\item $\Delta\Gamma M=0$ and $\eta^{0}_M:M\to H^0\R\Delta^2 (M)$ is an isomorphism;
\end{itemize}
 it is $\D$-torsionless if and only if 
 \begin{itemize}
 \item $\eta^{0}_M:M\to H^0\R\Delta^2 (M)$ is a monomorphism.
 \end{itemize}
\end{proposition}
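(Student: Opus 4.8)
The plan is to compute all of the cohomology of $\R\Delta^2(M)$ by iterating $\R\Delta$ along a truncation triangle, and then to match the induced maps with the underived evaluation map. First, since $\injdim {}_RU\le 1$ and $\injdim U_S\le 1$, for every module $N$ the complex $\R\Delta(N)$ has cohomology concentrated in degrees $0$ and $1$, with $H^0\R\Delta(N)=\Delta N$ and $H^1\R\Delta(N)=\Gamma N$. Applying this to $M$ and using the canonical truncation, $\R\Delta(M)$ fits into a triangle
\[\Delta M\to \R\Delta(M)\to \Gamma M[-1]\to \Delta M[1]\]
in $\D(S)$, where $\Delta M$ sits in degree $0$ and $\Gamma M$ in degree $1$.

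Next I would apply $\R\Delta$ to this triangle. Since $\Im\Delta\subseteq\Cogen U=\Ker\Gamma$ gives $(\R\Delta)^2=\R\Delta^2$, and since $\R\Delta$ is contravariant, the triangle above is sent to
\[\R\Delta(\Gamma M)[1]\to \R\Delta^2(M)\to \R\Delta(\Delta M)\to \R\Delta(\Gamma M)[2].\]
Here $\R\Delta(\Delta M)$ is concentrated in degree $0$ and equals $\Delta^2 M$, because $\Gamma\Delta M=0$; and $\R\Delta(\Gamma M)[1]$ has cohomology $\Delta\Gamma M$ in degree $-1$ and $\Gamma^2 M$ in degree $0$. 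The long exact cohomology sequence of this triangle then reads off at once: $H^i\R\Delta^2(M)=0$ for $i\neq 0,-1$, the identification $H^{-1}\R\Delta^2(M)=\Delta\Gamma M$, and the short exact sequence
\[0\to \Gamma^2 M\xrightarrow{\alpha_M} H^0\R\Delta^2(M)\xrightarrow{\beta_M}\Delta^2 M\to 0,\]
in which $\beta_M$ is the map induced on $H^0$ by the morphism $\R\Delta^2(M)\to \R\Delta(\Delta M)\cong\Delta^2 M$.

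To see that $\beta_M\circ\eta^0_M$ is the evaluation map $\delta_M$, I would pass to an explicit projective resolution $P^\bullet\to M$. Then $\R\Delta^2(M)$ is represented by the complex $\Delta^2(P^\bullet)$, and, by the very definition of the unit recalled above, $\eta_M$ is the chain map whose degree-$i$ component is $\delta_{P_i}$; hence $\eta^0_M$ is the map induced on $H^0$ by $\delta_{P_0}$. Under this model $\beta_M\colon H^0\Delta^2(P^\bullet)\to\Delta^2 M$ is the map induced by the augmentation $P_0\to M$, and it agrees with the surjection above by uniqueness of the natural comparison $H^0\R\Delta^2\to\Delta^2$ on modules. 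The naturality square of $\delta\colon\text{id}\to\Delta^2$ along the augmentation $P_0\to M$ gives $\Delta^2(\text{aug})\circ\delta_{P_0}=\delta_M\circ\text{aug}$; passing to $H^0$ this is exactly $\beta_M\circ\eta^0_M=\delta_M$.

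Finally, the two characterizations drop out of the cohomology computation. As $M$ is a stalk complex in degree $0$, $\eta_M$ is an isomorphism in $\D(R)$ precisely when $H^i(\eta_M)$ is an isomorphism for every $i$; for $i\neq 0$ the source $H^i M$ vanishes, so the only constraint besides $\eta^0_M$ being an isomorphism is $H^{-1}\R\Delta^2(M)=\Delta\Gamma M=0$, which proves the $\D$-reflexivity criterion. Likewise $\D$-torsionlessness asks that $\eta_M$ induce monomorphisms on all cohomologies, and since the maps in degrees $\neq 0$ have vanishing source this reduces to $\eta^0_M$ being a monomorphism. The step I expect to require the most care is the compatibility $\beta_M\circ\eta^0_M=\delta_M$: it forces one to fix a concrete resolution model for the derived unit $\eta$ and to check that the truncation morphism realizing $\beta_M$ is the one induced by the augmentation, after which the identity is pure naturality of $\delta$.
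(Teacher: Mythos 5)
Your proposal is correct, but note that the paper never proves this proposition in-text: it is imported wholesale from \cite{Ton00} (Theorem~1.2, Proposition~1.3) and \cite{ManTon10} (Proposition~4.2). Where the present paper does carry out computations of exactly this kind itself --- see the proof of Lemma~\ref{prop:key1}, which computes $H^i\R\Delta N$ and $H^i\R\Delta^3 N$ --- it fixes a projective resolution and repeatedly applies $\Delta$ to the syzygy short exact sequences, assembling the cohomology by hand; your argument instead feeds the canonical truncation triangle $\Delta M\to\R\Delta(M)\to\Gamma M[-1]\to\Delta M[1]$ through the contravariant triangulated functor $\R\Delta$, uses $\Gamma\Delta=0$ (i.e.\ $\Im\Delta\subseteq\Cogen U=\Ker\Gamma$) to collapse $\R\Delta(\Delta M)$ to the stalk $\Delta^2M$, and reads off the vanishing $H^i\R\Delta^2(M)=0$ for $i\neq 0,-1$, the identification $H^{-1}\R\Delta^2(M)=\Delta\Gamma M$, and the extension $0\to\Gamma^2M\to H^0\R\Delta^2(M)\to\Delta^2M\to 0$ from a single long exact sequence. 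That is structurally cleaner and generalizes immediately (the same device would reproduce the $\R\Delta^3$ statements of Lemma~\ref{prop:key1}), while the resolution chase buys the explicit cocycle descriptions that the paper actually needs later. One caution: your appeal to ``uniqueness of the natural comparison $H^0\R\Delta^2\to\Delta^2$'' is not a theorem and should be dropped; the correct justification is the concrete one you sketch next, namely that in the model $\R\Delta(M)=\Delta P^\bullet$ the truncation morphism $\tau_{\leq 0}\R\Delta(M)\to\R\Delta(M)$ is the inclusion $\Delta M=\Ker(\Delta P_0\to\Delta P_{-1})\hookrightarrow\Delta P_0$, i.e.\ $\Delta$ applied to the augmentation $P_0\twoheadrightarrow M$, so that $\beta_M$ is induced on $H^0$ by $\Delta^2(\mathrm{aug})$, and $\beta_M\circ\eta^0_M=\delta_M$ is then literally the naturality square of $\delta$ along the augmentation --- though making this airtight requires replacing both ends of the chain map $\Delta M\to\Delta P^\bullet$ by homotopically projective complexes before applying $\R\Delta$, a routine lift you rightly flag as the delicate step. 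The final reduction of $\D$-reflexivity to ($\Delta\Gamma M=0$ and $\eta^0_M$ iso) and of $\D$-torsionlessness to ($\eta^0_M$ mono) is exactly right, since a stalk complex has no cohomology outside degree $0$.
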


A 1-cotilting bimodule has a cogenerator-type property in the derived category:
\begin{corollary}\label{cor:cogeneratore}
If $H^i\R\Delta^2(M)=0$ for $i=0, -1$ then $M=0$.
\end{corollary}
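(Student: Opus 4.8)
The plan is to reduce everything to the defining property $(3)$ of a 1-cotilting module: any module $N$ with $\Delta N=0=\Gamma N$ must vanish. Hence it suffices to show that the two hypotheses force $\Delta M=0$ and $\Gamma M=0$, after which $M=0$ follows at once by applying $(3)$ to $M$ itself. The whole argument is thus an exercise in extracting the vanishing of the relevant $\Delta$- and $\Gamma$-values of $M$, $\Delta M$ and $\Gamma M$ from the cohomological description in Proposition~\ref{prop:cottheorem}.

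First I would unwind the hypotheses. Since $H^{-1}\R\Delta^2(M)=\Delta\Gamma M$ by Proposition~\ref{prop:cottheorem}, the assumption $H^{-1}\R\Delta^2(M)=0$ gives directly $\Delta\Gamma M=0$. For the degree-$0$ part I would use the exact row
\[0\to \Gamma^2 M\xrightarrow{\alpha_M} H^0\R\Delta^2(M)\xrightarrow{\beta_M}\Delta^2 M\to 0\]
of the same proposition: the assumption $H^0\R\Delta^2(M)=0$ then forces simultaneously $\Gamma^2 M=0$ and $\Delta^2 M=0$.

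Next I would feed these vanishings into property $(3)$ for the two auxiliary modules $\Delta M$ and $\Gamma M$. For $\Delta M$ one has $\Delta(\Delta M)=\Delta^2 M=0$, and $\Gamma(\Delta M)=0$ because $\Gamma\Delta^{\ell}=0$ for every $\ell\geq 1$ (recorded in the preliminaries as a consequence of $\Im\Delta\subseteq\Ker\Gamma$); property $(3)$ then yields $\Delta M=0$. For $\Gamma M$ one has $\Delta(\Gamma M)=\Delta\Gamma M=0$ and $\Gamma(\Gamma M)=\Gamma^2 M=0$ from the previous step, so property $(3)$ yields $\Gamma M=0$. With $\Delta M=0=\Gamma M$ in hand, a final application of $(3)$ to $M$ gives $M=0$.

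I do not expect a genuine obstacle here; the computation is short once the bookkeeping is right. The only point requiring care is keeping track of which composite each hypothesis controls, and in particular exploiting the asymmetry $\Gamma\Delta=0$ (while $\Delta\Gamma$ need not vanish in general): it is precisely this one-sided vanishing that lets the $\Delta M$ case go through with no extra assumption, so that both hypotheses $H^0\R\Delta^2(M)=0$ and $H^{-1}\R\Delta^2(M)=0$ are genuinely used.
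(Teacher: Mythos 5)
Your proof is correct and takes essentially the same route as the paper's: there too one reads $\Gamma^2 M=0=\Delta^2 M$ off the exact row of Proposition~\ref{prop:cottheorem}, gets $\Gamma M=0$ from $\Delta\Gamma M=0=\Gamma^2 M$ and $\Delta M=0$ from $\Delta^2 M=0$, and concludes $M=0$ by condition $(3)$ of Definition~\ref{definizione:1cotilting}. The only difference is cosmetic: you spell out the implication $\Delta^2 M=0\Rightarrow\Delta M=0$ via $\Gamma\Delta=0$, which the paper leaves implicit.
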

\begin{proof}
If $M\in\Ker H^0\R\Delta^2$ we have $\Delta^2 M=0=\Gamma^2 M$. Since $0=H^1\R\Delta^2 M=\Delta\Gamma M$ and $\Gamma\Gamma M=0$ imply $\Gamma M=0$, and $\Delta^2 M=0$ implies $\Delta M=0$, we conclude $M=0$ by Definition~\ref{definizione:1cotilting}.
\end{proof}

In the sequel, if $M$ is $\Delta$-torsionless (resp. $M$ is $\Delta$-torsion) we will identify the maps $\eta^0_M$ and $\beta_M\circ \eta^0_M$ (resp. $\alpha_M^{-1}\circ\eta^0_M$) which differ only for the isomorphism $\beta_M$ (resp. $\alpha_M^{-1}$).

\begin{corollary}\label{cor:H0Delta}
If $M$ is $\Delta$-torsionless then $H^0\R\Delta^2 (M)=\Delta^2 M$; in particular it is $\D$-reflexive if and only if it is $\Delta$-reflexive.
If $N$ is $\Delta$-torsion then
$H^0\R\Delta^2 (M)=\Gamma^2 M$.
\end{corollary}

We can check if a module is $\D$-torsionless or $\D$-reflexive considering separately its torsion and torsionless part with respect to the torsion pair $(\Ker\Delta,\Ker\Gamma)$:

\begin{proposition}\label{prop:iffrej}
Let $_RU_S$ be a 1-cotilting bimodule and $M$ be a module. Then for each $i\leq 0$ we have that $\eta^i_M$ is a monomorphism (resp. an isomorphism) if and only if $\eta^i_{\Rej_U M}$ and $\eta^i_{M/\Rej_U M}$ are monomorphisms (resp. isomorphisms).
In particular 
\begin{enumerate}
\item $M$ is $\D$-torsionless if and only if $\Rej_U M$ is $\D$-torsionless;
\item $M$ is $\D$-reflexive if and only if $\Rej_U M$ and $M/\Rej_U M$ are both $\D$-reflexive.
\end{enumerate}
\end{proposition}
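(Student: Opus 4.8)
The plan is to apply the triangulated functor $\R\Delta^2$ to the canonical short exact sequence $0\to\Rej_U M\to M\to M/\Rej_U M\to 0$ and to play the resulting morphism of long exact cohomology sequences against the torsion-theoretic nature of its outer terms. Write $A:=\Rej_U M$ and $C:=M/\Rej_U M$; recall that $A$ is $\Delta$-torsion and $C$ is $\Delta$-torsionless, so that $\Gamma C=0$ and $\delta_C$ is a monomorphism. Since $H^i X=0$ and, by Proposition~\ref{prop:cottheorem}, $H^i\R\Delta^2 X=0$ for every module $X$ and every $i\leq -2$, all the maps $\eta^i$ are trivially isomorphisms in those degrees; hence only $i=0$ and $i=-1$ require attention.

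Applying $\R\Delta^2$ to the triangle $A\to M\to C\to A[1]$ and taking cohomology produces, as in the diagram preceding Proposition~\ref{prop:cottheorem}, a ladder whose top row is the short exact sequence $0\to A\to M\to C\to 0$ concentrated in degree $0$ and whose bottom row is the long exact sequence of $\R\Delta^2$. Using that $H^j\R\Delta^2=0$ for $j\neq 0,-1$ and that $H^{-1}\R\Delta^2 C=\Delta\Gamma C=0$ because $\Gamma C=0$, this bottom sequence collapses into an isomorphism $H^{-1}\R\Delta^2 A\cong H^{-1}\R\Delta^2 M$ in degree $-1$ and a short exact sequence $0\to H^0\R\Delta^2 A\to H^0\R\Delta^2 M\to H^0\R\Delta^2 C\to 0$ in degree $0$. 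In degree $-1$ the statement is then immediate: the three domains $H^{-1}A,H^{-1}M,H^{-1}C$ all vanish, $\eta^{-1}_C$ is an isomorphism since its target is $0$, and naturality of $\eta$ together with the isomorphism $H^{-1}\R\Delta^2 A\cong H^{-1}\R\Delta^2 M$ shows that $\eta^{-1}_M$ is an isomorphism precisely when $\eta^{-1}_A$ is (the monomorphism condition being automatic throughout).

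In degree $0$ the two short exact sequences form a morphism of short exact sequences with vertical maps $\eta^0_A,\eta^0_M,\eta^0_C$, and I would run the snake lemma to obtain the exact sequence $0\to\Ker\eta^0_A\to\Ker\eta^0_M\to\Ker\eta^0_C\to\Coker\eta^0_A\to\Coker\eta^0_M\to\Coker\eta^0_C\to 0$. The implications ``$\eta^0_A$ and $\eta^0_C$ monomorphisms (resp.\ isomorphisms) force $\eta^0_M$ to be a monomorphism (resp.\ an isomorphism)'' read off at once. The main obstacle is the converse, where the snake lemma alone is too weak: it yields only $\Ker\eta^0_A=0$ when $\eta^0_M$ is a monomorphism, and only that $\eta^0_A$ is mono, $\eta^0_C$ is epi and $\Ker\eta^0_C\cong\Coker\eta^0_A$ when $\eta^0_M$ is an isomorphism. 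The extra ingredient that closes both converses is that $C$ is $\Delta$-torsionless: by Corollary~\ref{cor:H0Delta} and Proposition~\ref{prop:cottheorem} the map $\eta^0_C$ is identified with $\delta_C$, hence is always a monomorphism. Thus in the monomorphism case the condition on $\eta^0_C$ is automatic and only $\Ker\eta^0_A=0$ has to be extracted, while in the isomorphism case $\eta^0_C$ being simultaneously mono and epi makes it an isomorphism, whence $\Ker\eta^0_C=0$ forces $\Coker\eta^0_A=0$ and $\eta^0_A$ becomes an isomorphism as well.

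It then remains to harvest the two consequences. Because $C=M/\Rej_U M$ is $\Delta$-torsionless, $\eta^0_C$ is always a monomorphism and $\eta^{-1}_C$ is always an isomorphism, so $C$ is automatically $\D$-torsionless; combining the degree-$0$ monomorphism case with Proposition~\ref{prop:cottheorem} then gives (1), namely that $M$ is $\D$-torsionless if and only if $\Rej_U M$ is. For (2), Proposition~\ref{prop:cottheorem} says that $M$ is $\D$-reflexive exactly when $\eta^0_M$ and $\eta^{-1}_M$ are isomorphisms; applying the degree-$0$ and degree-$(-1)$ isomorphism cases simultaneously to $A$ and $C$ shows that this happens if and only if both $\Rej_U M$ and $M/\Rej_U M$ are $\D$-reflexive.
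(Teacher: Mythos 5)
Your proof is correct and takes essentially the same route as the paper's: both decompose $M$ via the short exact sequence $0\to\Rej_U M\to M\to M/\Rej_U M\to 0$, apply $\R\Delta^2$ to the associated triangle, use $\Delta\Gamma(M/\Rej_U M)=0$ to collapse the long exact sequence into an isomorphism $H^{-1}\R\Delta^2\Rej_U M\cong H^{-1}\R\Delta^2 M$ in degree $-1$ and a short exact sequence in degree $0$, and close the converse implications with the key observation that $\eta^0_{M/\Rej_U M}=\delta_{M/\Rej_U M}$ is automatically a monomorphism. The only cosmetic difference is that you run the snake lemma explicitly where the paper says ``by diagram chasing.''
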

\begin{proof}
We consider separately the cases $i\leq-2$, $i=-1$ and $i=0$.\\
$i\leq -2$: the maps $\eta^i_L=0$ are natural isomorphisms for each module $L$ (see Proposition~\ref{prop:cottheorem}).\\ $i=-1$: since $H^{-1}\R\Delta^2(M/\Rej_U M)=\Delta\Gamma (M/\Rej_U M)=0$, the map $\eta^{-1}_{M/\Rej_U M}$ is always an isomorphism. From the exact sequence
\[\xymatrix{0\ar[r]&H^{-1}\R\Delta^2\Rej_U M\ar[r]&H^{-1}\R\Delta^2 M\ar[r]&
0}
\]
we get immediately the thesis.\\
$i=0$: consider the commutative diagram
\[\xymatrix{0\ar[r]& \Rej_U M \ar[d]^{\eta^0_{\Rej_U M}}\ar[r]& M\ar[r]\ar[d]^{\eta^0_{M}}&M/\Rej_U M\ar@{^(->}[d]^{\eta^0_{M/\Rej_U M}=\delta_{M/\Rej_U M}}\ar[r]&0\\
0\ar[r]&
H^{0}\R\Delta^2\Rej_U M\ar[r]&H^{0}\Delta^2 M\ar[r]&
H^{0}\R\Delta^2 M/\Rej_U M\ar[r]&0
}
\]
If $\eta^0_M$ is a monomorphism, then $\eta^0_{\Rej_U M}$ is a monomorphism. Conversely, if $\eta^0_{\Rej_U M}$ is a monomorphism, also $\eta^0_M$ is a monomorphism by diagram chasing.
If $\eta^0_M$ is an isomorphism, then $\eta^0_{\Rej_U M}$ is a monomorphism and $\eta^0_{M/\Rej_U M}$ is an epimorphism and hence an isomorphism. Therefore also $\eta^0_{\Rej_U M}$ is an isomorphism. The converse is clearly true.
\end{proof}

We collect in the following result some useful property of $\Delta$-torsion modules.
\begin{lemma}\label{prop:key1}
Let $_RU_S$ be a $1$-cotilting bimodule. Consider a $\Delta$-torsion module $N$. Then
\begin{enumerate}
\item $\R\Delta N=\Gamma N[-1]$;
\item $H^0\R\Delta^3 (N)=\Delta\Gamma^2 N$, $H^i\R\Delta^3 (N)=0$ for each $i\geq 2$ and there is a short exact sequence
\[0\to \Gamma^3 N\to H^1\R\Delta^3 (N)\to \Delta^2\Gamma N\to 0;\]
\item $H^1\R\Delta(\eta_N)\circ\eta^0_{\Gamma N}=1_{\Gamma N}$ and hence $\Gamma N\leq^{\oplus}H^1\R\Delta^3 (N)$.
\end{enumerate}
If $\Delta\Gamma N=0$,  then
\begin{enumerate}[resume]
\item $\Gamma(\eta^0_{N})\circ \eta^0_{\Gamma N}=1_{\Gamma N}$
and hence $\Gamma N\leq^{\oplus}\Gamma^3 N$;
\item $\Ker(\eta^0_N)$ is $\Delta$-torsionless.
\end{enumerate}
If $\Delta\Gamma^2 N=0$ then
\begin{enumerate}[resume]
\item $\Coker\eta^0_N$ is $\Delta$-torsion.
\end{enumerate}
\end{lemma}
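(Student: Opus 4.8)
The plan is to analyze the machinery of Proposition~\ref{prop:cottheorem} applied to the module $N$ and its derived images, tracking how the torsion hypotheses propagate. For a $\Delta$-torsion module $N$ we have by part (1) that $\R\Delta N = \Gamma N[-1]$, so $\R\Delta^2 N = \R\Delta(\Gamma N)[1]$ and $\R\Delta^3 N = \R\Delta^2(\Gamma N)[1]$; this shift is the computational heart of the argument. Applying Proposition~\ref{prop:cottheorem} to the module $\Gamma N$ in place of $M$ gives the short exact sequence
\[\xymatrix{0\ar[r]&\Gamma^2(\Gamma N)\ar[r]^-{\alpha}& H^0\R\Delta^2(\Gamma N)\ar[r]^-{\beta}& \Delta^2(\Gamma N)\ar[r]& 0}\]
together with $H^{-1}\R\Delta^2(\Gamma N)=\Delta\Gamma(\Gamma N)=\Delta\Gamma^2 N$ and vanishing in all other degrees. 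After the degree shift by $1$ coming from $\R\Delta^3 N=\R\Delta^2(\Gamma N)[1]$, this is exactly the content of part~(2): $H^0\R\Delta^3 N=H^{-1}\R\Delta^2(\Gamma N)=\Delta\Gamma^2 N$, the higher vanishing gives $H^i\R\Delta^3 N=0$ for $i\geq 2$, and the short exact sequence above reindexes to $0\to\Gamma^3 N\to H^1\R\Delta^3 N\to \Delta^2\Gamma N\to 0$.

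For part~(3) the idea is to use the splitting identity already quoted in the excerpt: since $\R\Delta(\eta_{X})\circ\eta_{\R\Delta X}=1_{\R\Delta X}$ for any complex $X$, specializing to $X=N$ and taking $H^1$ should yield $H^1\R\Delta(\eta_N)\circ \eta^0_{\Gamma N}=1_{\Gamma N}$ once one identifies $H^1\R\Delta^2 N$ with $\Gamma N$ and $H^1(\R\Delta N)$-level units correctly through the shift $\R\Delta N=\Gamma N[-1]$. The resulting split monomorphism $\eta^0_{\Gamma N}$ into $H^1\R\Delta^3 N$ gives the direct-summand claim $\Gamma N\leq^\oplus H^1\R\Delta^3 N$. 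Parts (4)--(6) are then refinements obtained by feeding in the successive vanishing hypotheses. When $\Delta\Gamma N=0$ the sequence of part~(2) degenerates ($H^0\R\Delta^3 N=\Delta\Gamma^2 N$ but now the $-1$ cohomology of $\R\Delta^2(\Gamma N)$ vanishes since $\Delta\Gamma^2 N$ sits in a controlled spot), making $\Gamma N$ a direct summand of $\Gamma^3 N$ via $\Gamma(\eta^0_N)\circ\eta^0_{\Gamma N}=1_{\Gamma N}$; and Proposition~\ref{prop:iffrej} together with the torsion-pair description lets one deduce that $\Ker\eta^0_N$ is $\Delta$-torsionless and, under the stronger hypothesis $\Delta\Gamma^2 N=0$, that $\Coker\eta^0_N$ is $\Delta$-torsion.

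I would proceed by establishing part~(1) first (or invoking it as given), then deriving the triangle $\R\Delta N\to \R\Delta^2 N\to \cdots$ and reindexing cohomology carefully; the bookkeeping of which cohomological degree lands where under the two shifts by $[-1]$ and $[1]$ is the step most prone to sign or index errors. The genuinely delicate point, and what I expect to be the main obstacle, is verifying the \emph{splitting identities} of parts (3) and (4) at the level of individual cohomology modules rather than in the derived category: the clean statement $\R\Delta(\eta_X)\circ\eta_{\R\Delta X}=1_{\R\Delta X}$ lives in $\D(S)$, and one must check that applying $H^1$ and then chasing through the identification $H^1\R\Delta^2 N\cong\Gamma N$ (and the analogous identification at the $\R\Delta^3$ level) really does recover the asserted idempotent splitting, so that $\eta^0_{\Gamma N}$ is a genuine split mono of modules. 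Once that compatibility is secured, parts (5) and (6) follow by a short exact-sequence analysis: applying $\Delta$ and $\Gamma$ to $0\to\Ker\eta^0_N\to N\to\Im\eta^0_N\to 0$ and to $0\to\Im\eta^0_N\to H^0\R\Delta^2 N\to\Coker\eta^0_N\to 0$, and reading off the torsion/torsionless status from the vanishing of $\Delta\Gamma N$ and $\Delta\Gamma^2 N$ respectively, using that $H^0\R\Delta^2 N=\Gamma^2 N$ for $\Delta$-torsion $N$ by Corollary~\ref{cor:H0Delta}.
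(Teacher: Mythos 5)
Your proposal is sound and, on the crucial part (2), takes a genuinely different route from the paper. The paper fixes a projective resolution $\cdots\to P_{-1}\to P_0\to N\to 0$, applies $\Delta$ three times, and chases the resulting web of short exact sequences to compute $H^i\R\Delta^3(N)$ and extract $0\to\Gamma^3 N\to H^1\R\Delta^3(N)\to\Delta^2\Gamma N\to 0$ by hand. You instead bootstrap from part (1): since $\R\Delta^2$ is covariant and commutes with shifts, $\R\Delta^3 N\cong\R\Delta^2(\Gamma N)[-1]$, and Proposition~\ref{prop:cottheorem} applied to the module $\Gamma N$ delivers all of (2) at once. This trades the paper's elementary but lengthy diagram chase for a one-line reduction to the already proved structure theorem for $H^\bullet\R\Delta^2$ of a module; the cost is shift bookkeeping, and there you slipped: with the paper's convention ($H^i(X[n])=H^{i+n}(X)$, so that $H^1(\Gamma N[-1])=\Gamma N$), a contravariant $\R\Delta$ satisfies $\R\Delta(X[n])=\R\Delta(X)[-n]$, whence $\R\Delta^3 N=\R\Delta^2(\Gamma N)[-1]$, not $[1]$ as you wrote. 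The identifications you then actually use ($H^0\R\Delta^3 N=H^{-1}\R\Delta^2(\Gamma N)=\Delta\Gamma^2 N$ and $H^1\R\Delta^3 N=H^0\R\Delta^2(\Gamma N)$, vanishing for $i\geq 2$) are the ones consistent with the correct shift, so the error is purely notational and self-correcting. Parts (3) and (4) follow the paper's route exactly: apply $H^1$ to $\R\Delta(\eta_N)\circ\eta_{\R\Delta N}=1_{\R\Delta N}$ through $\R\Delta N=\Gamma N[-1]$; when $\Delta\Gamma N=0$ the sequence of (2) collapses to $H^1\R\Delta^3(N)=\Gamma^3 N$ because $\Delta^2\Gamma N=\Delta(\Delta\Gamma N)=0$ — your parenthetical attributing the collapse to the $-1$ cohomology of $\R\Delta^2(\Gamma N)$ is garbled, since $H^{-1}\R\Delta^2(\Gamma N)=\Delta\Gamma^2 N$ need not vanish under that hypothesis.

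Two further points. Your passing appeal to Proposition~\ref{prop:iffrej} for (5) is misplaced (that proposition reduces reflexivity to the torsion and torsionless parts and plays no role here), but the exact-sequence plan in your final paragraph is the correct one and is the paper's: from $0\to\Ker\eta^0_N\to N\to I\to 0$ and $0\to I\to\Gamma^2 N\to\Coker\eta^0_N\to 0$ with $I=\Im\eta^0_N$ one gets $\Gamma I\to\Gamma N\to\Gamma\Ker\eta^0_N\to 0$. Be aware, however, that ``reading off the status from the vanishing of $\Delta\Gamma N$'' does not close this: to conclude $\Gamma\Ker\eta^0_N=0$ you must note that $\Gamma(\eta^0_N)$ factors through $\Gamma I$ and is an epimorphism precisely by the splitting identity of part (4) — this dependence of (5) on (4) is the one step your sketch leaves implicit. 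Part (6) is as you say, and (1), which you invoke as given, deserves its one line: $\Delta N=0$ together with $\injdim U\leq 1$ forces the cohomology of $\R\Delta N$ to be concentrated in degree $1$, where it equals $\Gamma N$, so $\R\Delta N\cong\Gamma N[-1]$ in the derived category (the paper reads this off the complex $0\to\Delta P_0\to\Delta P_{-1}\to\cdots$).
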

\begin{proof}
Let $...\stackrel{d_{-2}}\to P_{-1}\stackrel{d_{-1}}\to P_0\to (N\to )0$ be a projective resolution of $N$. Let us denote by $K_i$ the kernel of $d_i$. 
Applying $\Delta$ to the short exact sequences
\[0\to \Im d_{-1}\to P_0\to N\to 0,\ 
0\to K_{-1}\to P_{-1}\to\Im d_{-1}\to 0\text{ and}\]
\[0\to K_{-i}\to P_{-i}\to K_{-i+1}\to 0, \quad i\geq 2\]
we get
\[0=\Delta N\to \Delta P_0\to \Delta \Im d_{-1}\to \Gamma N\to 0,\]
\[0\to \Delta \Im d_{-1}\to \Delta P_{-1} \to \Delta K_{-1}\to 0, \quad \text{ and}\]
\[0\to \Delta K_{-i+1}\to \Delta P_{-i} \to \Delta K_{-i}\to 0\]
Applying $\Delta$ again we get
\[\xymatrix{0\ar[r]& \Delta\Gamma N\ar[r]& \Delta^2 \Im d_{-1}\ar[rr]\ar@{->>}[dr]&& \Delta^2 P_0\ar[r]& \Gamma ^2N\ar[r]& 0\\
&&&J\ar@{^(->}[ru]
}\]
\[0\to \Delta^2 K_{-1}\to \Delta^2 P_{-1} \to \Delta^2 \Im d_{-1}\to 0\text{ and}\]
\[0\to \Delta^2 K_{-i}\to \Delta^2 P_{-i} \to \Delta^2 K_{-i+1}\to 0\]
Applying a third time $\Delta$ we get
\[0\to \Delta\Gamma^2 N\to \Delta^3 P_0\to \Delta J\to  \Gamma ^3N\to 0,\]
\[0\to\Delta J\to\Delta^3\Im d_{-1}\to\Delta^2\Gamma N\to 0\]
\[0\to \Delta^3 \Im d_{-1}\to \Delta^3 P_{-1} \to \Delta^3 K_{-1}\to 0\text{ and}\]
\[0\to \Delta^3 K_{-i+1}\to \Delta^3 P_{-i} \to \Delta^3 K_{-i}\to 0\]
1. Since
\[\R\Delta N=\qquad 0\to \Delta P_0\to \Delta P_{-1}\to ...\]
we have easily $H^0 \R\Delta N=0$, $H^1 \R\Delta N=\Gamma N$, and 
$H^i \R\Delta N=0$ for each $i\not= 0,1$.\\
2. Since
\[\R\Delta^3 N=\qquad 0\to \Delta^3 P_0\to \Delta^3 P_{-1}\to ...\]
we have $H^0 \R\Delta^3 N=\Delta\Gamma^2 N$ and $H^i \R\Delta^3 N=0$ for each $i\not=0,1$. Finally we have the following commutative diagram
\[\xymatrix{\Delta^3 P_0\ar@{=}[d]\ar[r]&\Delta J\ar@{^(->}[d]\ar[r]&\Gamma^3 N\ar@{^(-->}[d]\ar[r]&0\\
\Delta^3 P_0\ar[r]&\Delta^3\Im d_{-1}=\Ker(\Delta^3P_{-1}\to\Delta^3P_{-2})\ar[d]\ar[r]&H^1\R\Delta^3 N\ar@{-->>}[dl]\ar[r]&0\\
&\Delta^2\Gamma N\ar[d]\\
&0
}
\]
from which we get the short exact sequence
\[0\to\Gamma^3 N\to H^1\R\Delta^3 N\to \Delta^2\Gamma N\to 0\]
3. We have $\R\Delta(\eta_N)\circ \eta_{\R\Delta N}=1_{\R\Delta N}$ and $\R\Delta N=\Gamma N[-1]$; in particular
\[1_{\Gamma N}=1_{H^1\R\Delta N}=H^11_{\R\Delta N}=
H^1\R\Delta(\eta_N)\circ H^1\eta_{\R\Delta N}=\]
\[=
H^1\R\Delta(\eta_N)\circ H^1\eta_{\Gamma N[-1]}=
H^1\R\Delta(\eta_N)\circ \eta^0_{\Gamma N}
\]
Therefore $\eta^0_{\Gamma N}$ is a monomorphism, $H^1\R\Delta(\eta_N)$ is an epimorphism and $\Gamma N$ is a direct summand of $H^1\R\Delta^3 N$.\\
4. Since $\Gamma N$ is $\Delta$-torsion, by (2) it is $H^1\R\Delta^3 N=\Gamma^3 N$ and we get easily that $H^1\R\Delta(\eta_N)=\Gamma(\eta^0_N)$. Then we conclude by (3).\\
5. Consider the exact sequence
\[\xymatrix{
(*)&0\ar[r]& \Ker\eta^0_N\ar[r]& N\ar[rr]^{\eta^0_N}\ar@{->>}[rd]&& \Gamma^2 N\ar[r]& \Coker \eta^0_N\ar[r]& 0\\
&&&&I\ar@{^(->}[ru]
}
\]
Applying the functor $\Delta$ we get the exact sequences
\[\Gamma^3 N\to \Gamma I\to 0\quad\text{and}\quad
\Gamma I\to \Gamma N\to \Gamma \Ker\eta^0_N\to 0\]
Since $\Gamma(\eta^0_N)$ is an epimorphism by (4), we conclude $\Gamma \Ker\eta^0_N=0$.\\
6. Applying $\Delta$ to $(*)$ we get $0\to\Delta\Coker\eta^0_N\to \Delta\Gamma^2 N=0$.
\end{proof}

\section{$\D$-torsionless modules}\label{sec:Driflmodules}

In this section we study the $\D$-torsionless modules. In particular we will see that they are in general different from the whole category of modules.

In the sequel we assume always $_RU_S$ is a 1-cotilting bimodule (see Definition~\ref{definizione:1cotilting}). 
Let us denote by $\D \Cogen U$ the class of $\D$-torsionless modules, i.e. of modules $M$ such that $\eta^0_M$ is a monomorphism (see Proposition~\ref{prop:cottheorem}). Clearly $\Cogen U\subseteq \D \Cogen U$. The class of $\D$-torsionless modules is a pretorsion-free class, i.e. it is closed under submodules and products; in general it is not closed under extensions.

\begin{lemma}\label{chiusoestpart}
Given an exact sequence $\xymatrix{0\ar[r]&L\ar[r]&M\ar[r]&N\ar[r]&0}$, if $L$ and $N$ belong to $\D \Cogen U$ and $\Delta\Gamma N=0$, then also $M$ belongs to $\D \Cogen U$.
\end{lemma}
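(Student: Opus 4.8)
The plan is to apply the functor $\R\Delta^2$ to the triangle $L\to M\to N\to L[1]$ associated with the given short exact sequence, and to read off the induced commutative ladder of cohomologies, exactly as in the diagram preceding Proposition~\ref{prop:cottheorem}. Since $H^i\R\Delta^2(X)=0$ for every module $X$ and every $i\neq 0,-1$, the long exact sequence of cohomologies collapses: its top (the cohomology of the identity functor on the triangle) is just $0\to L\to M\to N\to 0$, while its bottom is the exact sequence
\[
H^{-1}\R\Delta^2 N\to H^0\R\Delta^2 L\to H^0\R\Delta^2 M\to H^0\R\Delta^2 N\to H^1\R\Delta^2 L,
\]
in which $H^1\R\Delta^2 L=0$ and, by Proposition~\ref{prop:cottheorem}, the leftmost term is $H^{-1}\R\Delta^2 N=\Delta\Gamma N$.

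The crucial point is that the hypothesis $\Delta\Gamma N=0$ makes $H^{-1}\R\Delta^2 N=0$, so the connecting map vanishes and the bottom row becomes a genuine short exact sequence. Aligning it with the top row through the natural maps $\eta^0_L$, $\eta^0_M$, $\eta^0_N$ produces the commutative diagram with exact rows
\[
\xymatrix{0\ar[r]& L\ar[d]^{\eta^0_L}\ar[r]& M\ar[d]^{\eta^0_M}\ar[r]& N\ar[d]^{\eta^0_N}\ar[r]& 0\\
0\ar[r]& H^0\R\Delta^2 L\ar[r]& H^0\R\Delta^2 M\ar[r]& H^0\R\Delta^2 N\ar[r]& 0.}
\]

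Now I would invoke the hypotheses on the outer terms. Since $L$ and $N$ lie in $\D\Cogen U$, the maps $\eta^0_L$ and $\eta^0_N$ are monomorphisms by the characterization in Proposition~\ref{prop:cottheorem}. The injective half of the short five lemma then gives that $\eta^0_M$ is a monomorphism as well: any $x\in\Ker\eta^0_M$ maps to $0$ in $N$, hence (by injectivity of $\eta^0_N$) lifts to some $y\in L$, and the injectivity of the lower-left map together with that of $\eta^0_L$ forces $y=0$, so $x=0$. Thus $M\in\D\Cogen U$.

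No step here is a genuine obstacle; the entire content of the lemma is the recognition that $\Delta\Gamma N=0$ is precisely the condition killing the connecting term $H^{-1}\R\Delta^2 N=\Delta\Gamma N$, which is exactly what makes the lower-left map $H^0\R\Delta^2 L\to H^0\R\Delta^2 M$ injective and so lets the diagram chase go through. Without it, this map need not be injective and the chase breaks down, consistent with the fact, noted just before the statement, that $\D\Cogen U$ fails to be closed under arbitrary extensions.
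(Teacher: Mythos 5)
Your proposal is correct and follows essentially the same route as the paper: apply $\R\Delta^2$ to the triangle associated with the short exact sequence, observe that the connecting term $H^{-1}\R\Delta^2 N=\Delta\Gamma N$ vanishes by hypothesis so the bottom row of the ladder is exact with an injective left-hand map, and conclude that $\eta^0_M$ is monic by the injective half of the five lemma. The only difference is cosmetic: the paper leaves the final diagram chase implicit, whereas you spell it out.
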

\begin{proof}
Consider the commutative diagram with exact rows
\[\xymatrix{0\ar[r]&L\ar[d]^{\eta^0_L}\ar[r]&M\ar[d]^{\eta^0_M}\ar[r]&N\ar[d]^{\eta^0_N}\ar[r]&0\\
\Delta\Gamma N=0\ar[r]&H^0\R\Delta^2 L\ar[r]&H^0\R\Delta^2 M\ar[r]&H^0\R\Delta^2 N\ar[r]&0}\]
If $L$ and $N$ are $\D$-torsionless, also $\eta^0_M$ is a monomorphism. 
\end{proof}

Remembering that $\D\Cogen U$ is closed under submodules and products, the following result give a large number of $\D$-torsionless modules.
\begin{proposition}\label{prop:Dtorsionless}
For each module $M$, the modules $\Delta M$, $\Gamma M$ and $H^0\R\Delta^2 M$ are $\D$-torsionless. If $\Delta\Gamma^3 M=0=\Delta\Gamma M$ then also
$M$ is $\D$-torsionless.
\end{proposition}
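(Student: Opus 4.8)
The proof splits into two independent assertions. For the first, I want to show that $\Delta M$, $\Gamma M$ and $H^0\R\Delta^2 M$ are all $\D$-torsionless. The cleanest route is to exhibit each of these as the image of one of the functors $\R\Delta$ (on suitable complexes), since we already know from the discussion preceding Theorem~\ref{two:coomologie} that $\R\Delta(X^\bullet)$ is always $\D$-torsionless (its cohomologies inject into their double-duals). Concretely, $\Delta M$ is $\Delta$-torsionless, hence $\D$-torsionless since $\Cogen U\subseteq\D\Cogen U$; the same applies to $\Gamma M$, which lies in $\Cogen U=\Ker\Gamma$ because $\Im\Gamma\subseteq\Cogen U$. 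For $H^0\R\Delta^2 M$ I would observe that it is the degree-$0$ cohomology of $\R\Delta(\R\Delta M)$, so it is a cohomology of a complex in the image of $\R\Delta$, and therefore $\D$-torsionless by the general remark. Alternatively, using Proposition~\ref{prop:cottheorem} one sees $H^0\R\Delta^2 M$ sits in an extension of $\Delta^2 M$ by $\Gamma^2 M$, both of which lie in $\Cogen U$; since $\D\Cogen U$ is closed under submodules and $\Delta\Gamma(\Delta^2 M)=0$ (as $\Delta^2 M$ is $\Delta$-torsionless), Lemma~\ref{chiusoestpart} closes this extension inside $\D\Cogen U$.

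For the second assertion I want to prove that the hypotheses $\Delta\Gamma^3 M=0=\Delta\Gamma M$ force $M$ itself into $\D\Cogen U$. The strategy is to use Proposition~\ref{prop:iffrej}: $M$ is $\D$-torsionless iff $\Rej_U M$ is $\D$-torsionless, so I may replace $M$ by its $\Delta$-torsion part $N:=\Rej_U M$. The passage to $N$ is legitimate because the torsion part absorbs the relevant $\Gamma$-iterates: one needs to check that $\Delta\Gamma N=0$ and $\Delta\Gamma^3 N=0$ follow from the corresponding vanishings for $M$, which should come from the fact that $M/\Rej_U M\in\Cogen U=\Ker\Gamma$, so $\Gamma M\cong\Gamma N$ and inductively $\Gamma^k M\cong\Gamma^k N$ for the higher powers, or at least the relevant $\Delta\Gamma^k$ terms agree. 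Thus the problem reduces to showing a $\Delta$-torsion module $N$ with $\Delta\Gamma N=0=\Delta\Gamma^3 N$ is $\D$-torsionless, i.e. that $\eta^0_N$ is a monomorphism.

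This is exactly where Lemma~\ref{prop:key1} does the work. Under $\Delta\Gamma N=0$, part~(5) of that lemma tells us $\Ker\eta^0_N$ is $\Delta$-torsionless. But $\Ker\eta^0_N$ is a submodule of the $\Delta$-torsion module $N$, and $\Ker\Delta$ is closed under submodules (it is the torsion class of the torsion pair $(\Ker\Delta,\Cogen U)$), so $\Ker\eta^0_N$ is simultaneously $\Delta$-torsion and $\Delta$-torsionless, whence $\Ker\eta^0_N=0$. That gives injectivity of $\eta^0_N$, so $N$ is $\D$-torsionless. I should double-check whether the hypothesis $\Delta\Gamma^3 M=0$ is genuinely needed here or whether it enters only to guarantee the reduction to $N$ behaves well (for instance to control $\Gamma^3$ when verifying the torsion part inherits the hypotheses, or to apply part~(4) of Lemma~\ref{prop:key1} giving $\Gamma N\leq^\oplus\Gamma^3 N$); I expect the main subtlety, and the step most likely to need care, is precisely verifying that $\Rej_U M$ inherits both vanishing conditions so that Lemma~\ref{prop:key1}(5) applies. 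Once $\Rej_U M$ is shown $\D$-torsionless, Proposition~\ref{prop:iffrej}(1) immediately yields that $M$ is $\D$-torsionless, completing the proof.
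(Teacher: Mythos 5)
Both halves of your proposal contain a genuine error, and interestingly both stem from the same inversion. For the first assertion, your argument for $\Gamma M$ rests on the claim $\Im\Gamma\subseteq\Cogen U$, which is false: since $\Cogen U=\Ker\Gamma$, it would force $\Gamma^2=0$ identically, which fails whenever a nonzero $\D$-reflexive $\Delta$-torsion module $N$ exists (then $N\cong H^0\R\Delta^2 N=\Gamma^2 N$ by Corollary~\ref{cor:H0Delta}, and such modules abound, e.g.\ over the ring of Example~\ref{ex:noMorita} where all finitely generated modules are $\D$-reflexive); indeed Proposition~\ref{lemma:Grifl=rifl} shows $\Gamma N$ is typically $\Delta$-torsion, not torsionless. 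The correct statement is only that $\Gamma M$ is $\D$-torsionless, and the paper gets it from Lemma~\ref{prop:key1}(3): for $\Delta$-torsion $N$ one has $H^1\R\Delta(\eta_N)\circ\eta^0_{\Gamma N}=1_{\Gamma N}$, so $\eta^0_{\Gamma N}$ is a split monomorphism, and for general $M$ one uses $\Gamma M\cong\Gamma(\Rej_U M)$. Your alternative ``image of $\R\Delta$'' route hides the same gap: $\D$-torsionlessness of the complex $\R\Delta M$ gives monomorphisms $H^i\R\Delta M\to H^i(\R\Delta)^3M$, but $H^i(\R\Delta)^3M$ is not $H^0\R\Delta^2(H^i\R\Delta M)$ in general, so this does not say the cohomology modules are $\D$-torsionless; the identification works only for stalk complexes, which is exactly why Lemma~\ref{prop:key1}(3) passes through $\R\Delta N=\Gamma N[-1]$ for torsion $N$. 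Your extension argument for $H^0\R\Delta^2M$ via Lemma~\ref{chiusoestpart} does match the paper, but it needs $\Gamma^2 M$ to be $\D$-torsionless (which follows once the $\Gamma$-step is repaired), not $\Gamma^2M\in\Cogen U$.

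For the second assertion the fatal step is the claim that the torsion class $\Ker\Delta$ is closed under submodules. Torsion classes are closed under epimorphic images, extensions and coproducts; closure under submodules is precisely heredity of the torsion pair, which the paper pointedly does not assume --- by Lemma~\ref{lemma:Bon}(3), heredity would give $\Delta\Gamma=0$ on the whole category and trivialize most of the paper. So from $\Ker\eta^0_N\leq N$ with $N$ $\Delta$-torsion you cannot conclude that $\Ker\eta^0_N$ is $\Delta$-torsion, and your punchline collapses. Your own hedge about whether $\Delta\Gamma^3M=0$ is needed is telling: your argument never uses it, whereas the paper's proof uses it essentially. After Lemma~\ref{prop:key1}(5) gives $\Gamma\Ker\eta^0_N=0$, the paper sets $I:=\Im\eta^0_N$ inside the sequence $0\to\Ker\eta^0_N\to N\to\Gamma^2N\to\Coker\eta^0_N\to 0$, uses $\Delta\Gamma^3N=0$ to force $\Delta\Gamma I=0$, and then exploits Lemma~\ref{prop:key1}(4): the split monomorphism $\Gamma^2(\eta^0_N)$ factors through the epimorphism $\Gamma^2N\to\Gamma^2I$, so $\Gamma^2N\to\Gamma^2I$ is mono, whence $\Delta^2\Ker\eta^0_N=0$, then $\Delta\Ker\eta^0_N=0$, and finally $\Ker\eta^0_N=0$ by condition (3) of Definition~\ref{definizione:1cotilting}, since both $\Delta$ and $\Gamma$ vanish on it. Your reduction to $N=\Rej_UM$ via Proposition~\ref{prop:iffrej}(1), with the hypotheses inherited because $\Gamma^kM\cong\Gamma^k\Rej_UM$, is correct and agrees with the paper; it is the core vanishing argument that genuinely requires the longer diagram chase.
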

\begin{proof}
Clearly $\Delta M\in\Cogen U\subseteq\D\Cogen U$. By Lemma~\ref{prop:key1}, (3), $\eta^0_{\Gamma N}$ is a monomorphism. From the exact sequence
\[\xymatrix{0\ar[r]&\Gamma^2 M\ar[r]&H^0\R\Delta^2 M\ar[r]&\Delta^2 M\ar[r]&0}\]
and Lemma~\ref{chiusoestpart}, we obtain that also $H^0\R\Delta^2 M$ is $\D$-torsionless. Let $\Delta\Gamma^3 M=0=\Delta\Gamma M$; to prove that $M$ is $\D$-torsionless it is not restrictive to assume $M$ is $\Delta$-torsion (see Proposition~\ref{prop:iffrej}).
By Lemma~\ref{prop:key1}, (5), $\Gamma\Ker\eta^0_M=0$; then applying $\Delta$ to the diagram 
\[\xymatrix{
0\ar[r]& \Ker\eta^0_M\ar[r]& M\ar[rr]^{\eta^0_M}\ar@{->>}[rd]&& \Gamma^2 M\ar[r]& \Coker \eta^0_M\ar[r]& 0\\
&&&I\ar@{^(->}[ru]
}
\]
we get the exact sequences
\[\Delta M=0\to\Delta \Ker\eta^0_M\to\Gamma I\to \Gamma M \to\Gamma \Ker\eta^0_M=0\quad\text{and}\]
\[\Delta I=0\to \Gamma \Coker \eta^0_M\to\Gamma^3 M\to \Gamma I\to 0.\]
Applying again $\Delta$ we get
\[\Delta\Gamma M=0\to\Delta\Gamma I\to\Delta^2 \Ker\eta^0_M\to \Gamma^2 M\to \Gamma^2 I\to 0\quad\text{and}\]
\[0\to\Delta\Gamma I\to \Delta\Gamma^3 M=0\to \Delta \Gamma \Coker \eta^0_M\to \Gamma^2 I\to \Gamma^4 M\to \Gamma^2 \Coker \eta^0_M\to 0\]
Since $\Gamma(\eta^0_{\Gamma M})\circ \Gamma^2(\eta^0_M)=\Gamma(\Gamma(\eta^0_M)\circ\eta^0_{\Gamma M})=
\Gamma(1_{\Gamma M})=1_{\Gamma^2 M}$, 
$\Gamma^2(\eta^0_M)$ is a monomorphism. The map $\Gamma^2(\eta^0_M)$ is the composition
of the epimorphism $\Gamma^2 M\to\Gamma^2 I$ and of $\Gamma^2 I\to\Gamma^4 M$.
Since $\Delta\Gamma I=0$, we get $\Delta^2 \Ker\eta^0_M=0$; then also $\Delta \Ker\eta^0_M=0$ and hence $\Ker\eta^0_M=0$ by Definition~\ref{definizione:1cotilting}.
\end{proof}

Any given class $\mathcal C$ of objects cogenerates a torsion pair \cite[Chapter~VI, \S 2]{Ste75} whose torsion free class is the smallest torsion free class containing $\mathcal C$.

\begin{proposition}
Let $_RU_S$ be a 1-cotilting bimodule. The class $\Ker H^0\R\Delta^2$ is the torsion class cogenerated by $\D\Cogen U$; it coincides with the class of modules $N$ such that $\eta^0_N=0$.
\end{proposition}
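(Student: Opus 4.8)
The plan is to prove the chain $\Ker H^0\R\Delta^2=\{N:\eta^0_N=0\}=\mathcal T_0$, where $\mathcal T_0$ denotes the torsion class cogenerated by $\D\Cogen U$; by \cite[Ch.~VI, \S2]{Ste75} it admits the explicit description $\mathcal T_0=\{N:\Hom(N,C)=0\text{ for all }C\in\D\Cogen U\}$. Two of these identifications are formal. The inclusion $\Ker H^0\R\Delta^2\subseteq\{N:\eta^0_N=0\}$ is immediate, since then $\eta^0_N$ maps into the zero module. For $\{N:\eta^0_N=0\}=\mathcal T_0$: if $\eta^0_N=0$ and $C\in\D\Cogen U$, naturality of $\eta^0$ gives $\eta^0_C\circ f=H^0\R\Delta^2(f)\circ\eta^0_N=0$ for every $f\colon N\to C$, and $\eta^0_C$ monic (as $C$ is $\D$-torsionless) forces $f=0$, so $N\in\mathcal T_0$; conversely, for $N\in\mathcal T_0$ the module $H^0\R\Delta^2 N$ is itself $\D$-torsionless by Proposition~\ref{prop:Dtorsionless}, so $\eta^0_N\in\Hom(N,H^0\R\Delta^2 N)=0$.

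The substantial part is the inclusion $\{N:\eta^0_N=0\}\subseteq\Ker H^0\R\Delta^2$, and this is the \textbf{main obstacle}: it is exactly where the asymmetry between $\Delta$ and $\Gamma$ must be controlled. First I reduce to the torsion case. If $\eta^0_N=0$ then $\delta_N=\beta_N\circ\eta^0_N=0$ by Proposition~\ref{prop:cottheorem}, and from $\Delta(\delta_N)\circ\delta_{\Delta N}=1_{\Delta N}$ it follows that $\Delta N=0$, i.e. $N$ is $\Delta$-torsion. By Corollary~\ref{cor:H0Delta} we then have $H^0\R\Delta^2 N=\Gamma^2 N$, so it remains to prove $\Gamma^2 N=0$; since $\Cogen U={}^\perp U$, this is equivalent to showing $\Gamma N\in\Cogen U$, that is, that the evaluation $\delta_{\Gamma N}$ is monic.

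To produce this I exploit the splitting of Lemma~\ref{prop:key1}. For the $\Delta$-torsion module $N$ one has $\R\Delta N=\Gamma N[-1]$ (Lemma~\ref{prop:key1}(1)), hence $H^1\R\Delta^3 N=H^0\R\Delta^2\Gamma N$, and Lemma~\ref{prop:key1}(3) gives $H^1\R\Delta(\eta_N)\circ\eta^0_{\Gamma N}=1_{\Gamma N}$. Applying Proposition~\ref{prop:cottheorem} to $\Gamma N$ yields the exact sequence $0\to\Gamma^3 N\to H^0\R\Delta^2\Gamma N\to\Delta^2\Gamma N\to 0$, with inclusion $\alpha_{\Gamma N}$, surjection $\beta_{\Gamma N}$, and $\beta_{\Gamma N}\circ\eta^0_{\Gamma N}=\delta_{\Gamma N}$. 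The key claim is that the retraction $H^1\R\Delta(\eta_N)$ kills the subobject $\Im\alpha_{\Gamma N}=\Gamma^3 N$; granting it, $H^1\R\Delta(\eta_N)$ factors as $\bar h\circ\beta_{\Gamma N}$ for some $\bar h$, and then $\bar h\circ\delta_{\Gamma N}=\bar h\circ\beta_{\Gamma N}\circ\eta^0_{\Gamma N}=1_{\Gamma N}$, so $\delta_{\Gamma N}$ is a split monomorphism and $\Gamma N\in\Cogen U$, giving $\Gamma^2 N=0$ as required.

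The one remaining point, the delicate one, is the key claim $H^1\R\Delta(\eta_N)\circ\alpha_{\Gamma N}=0$; it is the general analogue of the computation in the proof of Lemma~\ref{prop:key1}(4). There, under the extra hypothesis $\Delta\Gamma N=0$, all of $H^1\R\Delta^3 N$ equals $\Gamma^3 N$ and $H^1\R\Delta(\eta_N)$ is identified with $\Gamma(\eta^0_N)$; without that hypothesis the same naturality shows that the restriction of $H^1\R\Delta(\eta_N)$ to the subobject $\Gamma^3 N$ still equals $\Gamma(\eta^0_N)$, which vanishes because $\eta^0_N=0$. Equivalently, $\eta^0_N=0$ allows one to factor $\eta_N$ through the truncation $\tau_{\leq -1}\R\Delta^2 N\hookrightarrow\R\Delta^2 N$, and applying $\R\Delta$ and passing to $H^1$ realizes $\beta_{\Gamma N}$ as $H^1\R\Delta$ of this inclusion, so $H^1\R\Delta(\eta_N)$ factors through $\beta_{\Gamma N}$ by construction. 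Matching this truncation triangle against the exact sequence of Proposition~\ref{prop:cottheorem} for $\Gamma N$ is the single step that requires genuine care.
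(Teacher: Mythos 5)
Your proposal is correct and follows essentially the same route as the paper: the identical chain of three classes, the same reduction to showing $\Gamma N\in\Ker\Gamma$ for the $\Delta$-torsion module $N$, and the same key claim that $H^1\R\Delta(\eta_N)$ vanishes on the subobject $\Gamma^3 N=\Im\alpha_{\Gamma N}$ of $H^1\R\Delta^3 N=H^0\R\Delta^2(\Gamma N)$ because its restriction there is $\Gamma(\eta^0_N)=0$, which together with the splitting $H^1\R\Delta(\eta_N)\circ\eta^0_{\Gamma N}=1_{\Gamma N}$ of Lemma~\ref{prop:key1}(3) forces $\delta_{\Gamma N}=\beta_{\Gamma N}\circ\eta^0_{\Gamma N}$ to be a monomorphism. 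The one step you flag as delicate is precisely what the paper verifies, by an explicit diagram chase on a projective presentation $0\to K\to P\to N\to 0$ that identifies the restriction of $H^1\R\Delta(\eta_N)$ to $\Gamma^3 N$ with $\Gamma(\eta^0_N)$, so your claim is sound and your truncation sketch amounts to the same verification.
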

\begin{proof}
Since $\Im H^0\R\Delta^2\subseteq\D\Cogen U$ we have
\[\{N:H^0\R\Delta^2N=0\}\subseteq \{N:\eta^0_N=0\}\subseteq \{N:\Hom(N, M)=0\  \forall M \in \D\Cogen U\}\]
Assume  $\Hom(N, M)=0$ for each $M \in \D\Cogen U$.
First of all $N$ is $\Delta$-torsion: if $\Delta N\not=0$ there exists a non zero map between $N$ and $U\in\Cogen U\subseteq \D\Cogen U$. Then $H^0\R\Delta^2N=\Gamma^2 N$; since $\Gamma^2 N$ belongs to $\D\Cogen U$ by Proposition~\ref{prop:Dtorsionless}, we have $\eta^0_N=0$. Let us prove that $\Gamma N$ belongs to $\Ker\Gamma$ and hence $H^0\R\Delta^2 N=\Gamma^2 N=0$. Applying $\Delta$ to the diagram
\[\xymatrix{&0\ar[r]&K\ar[d]\ar[r]&P\ar[d]\ar[r]&N\ar[r]\ar[d]^{\eta^0_N=0}&0\\
0\ar[r]&\Delta\Gamma N\ar[r]&\Delta^2 K\ar@{->>}[dr]\ar[r]&\Delta^2 P\ar[r]&\Gamma^2 N\ar[r]&0\\
&&&I\ar@{^(->}[u]}\]
we get the commutative diagram
\[\xymatrix{\Delta P\ar[rr]&&\Delta K\ar[rr]&&\Gamma N\ar[r]&0\\
\Delta^3 P\ar[rr]\ar[u]^{\Delta(\delta_P)}&&\Delta^3 K\ar[rr]\ar[u]^{\Delta(\delta_K)}&&H^1\R\Delta^3(N)\ar[r]\ar[u]_{H^1\R\Delta(\eta_N)}&0\\
\Delta^3 P\ar[r]\ar@{=}[u]&\Delta I\ar[ru]\ar[rd]\ar[rr]|!{[dr];[ur]}\hole&&\Gamma^3 N\ar@{^(->}[rd]\ar@/^/[uur]^>>>>>{\Gamma(\eta^0_N)=0}|!{[ul];[ur]}\hole\ar[rr]|!{[dr];[ur]}\hole&&0\\
\Delta^3 P\ar[rr]\ar@{=}[u]&&\Delta^3 K\ar[rr]\ar@{=}[uu]&&H^0\R\Delta^2(\Gamma N)\ar[r]\ar@{->>}[rd]^{\beta_{\Gamma N}}\ar@{=}[uu]&0\\
&&&&&\Delta^2\Gamma N\\
\Delta P\ar[rr]\ar[uu]^{\delta_{\Delta P}}&&\Delta K\ar[rr]\ar[uu]^{\delta_{\Delta K}}&&\Gamma N\ar[r]\ar[uu]^{\eta^0_{\Gamma N}}&0}
\]
In particular $\Gamma^3 N$ is contained in $\Ker H^1\R\Delta(\eta_N)$.
Since $H^1\R\Delta(\eta_N)\circ \eta^0_{\Gamma N}=1_{\Gamma N}$ by Lemma~\ref{prop:key1}, (3), we have $\Im \eta^0_{\Gamma N}\cap \Gamma^3 N=0$ and hence $\beta_{\Gamma N}\circ\eta^0_{\Gamma N}$ is a monomorphism. Therefore $\Gamma N$ is $\Delta$-torsionless, i.e., it belongs to $\Ker\Gamma$.
\end{proof}

The torsion class $\Ker H^0\R\Delta^2$ contains all modules $M$  such that $\Delta^2 M=0=\Gamma^2 M$. In general $\Ker H^0\R\Delta^2\not =0$ and hence also the torsion free class cogenerated by $\D\Cogen U$ is a  proper subcategory of the whole category of modules:

\begin{example}[\cite{DEs02} Theorem~2.5]\label{ex:Gabri}
Let $A$ be the generalized Kronecker algebra of dimension $d$ over an algebraically closed field $K$. $A$ is isomorphic to $\left(\begin{array}{cc}K & 0 \\V & K\end{array}\right)$ where $V$ is a vector space of dimension an infinite cardinal $d$. By \cite[Lemma~2.2]{DEs02}, $_AA_A$ is a faithfully balanced cotilting bimodule. Following \cite[Theorem~2.5]{DEs02}, for every cardinal $c$ such that $1\leq c\leq d$, there is an indecomposable cyclic $A$-module $Y$ such that $\dim_K(Y)=c$, $\Delta Y=0$ and $\Gamma Y$ is a free module of uncountable rank: in particular $Y$ is $
\Delta$-torsion, $H^0\R\Delta^2 Y=\Gamma^2 Y=0$ and $\Delta\Gamma Y\not=0$.
\end{example}



\section{$\D$-reflexive modules}

Given a module $L$, we denote by $\gen L$ the class of all modules $M$ such that there exist a natural number $n\in\mathbb N$ and an epimorphism $\xymatrix{\phi^L_M:L^n\ar@{->>}[r]&M}$. We denote by $\pres L$ the class of modules in $\gen L$ such that $\Ker\phi^L_M$ belongs to $\gen L$.

\begin {lemma}\label{lemma:fingengenerale}
Let $_RU_S$ be a $1$-cotilting bimodule with $S=\End({}_RU)$. 
\begin{enumerate}
\item If $F$ belongs to $\gen S_S$ (resp. $\gen {}_RU$), then the morphism $\eta^0_F$ is epic; if $F$ is also $\D$-torsionless, then $\eta^0_F$ is an isomorphism. In particular
\begin{enumerate}
\item if $F$ is $\Delta$-torsionless, then $F$ is $\D$-reflexive.
\item if $F$ is a submodule of $\Gamma X$ for some $X$ in $R\lMod$ (resp. in $\rMod S$) then $\eta^0_F$ is an isomorphism; if further $\Delta\Gamma^2 X=0$ then $F$ is $\D$-reflexive.
\item if $F$ belongs to $\pres S_S$  (resp. $\pres {}_RU$), then $F$ is $\D$-reflexive.
\end{enumerate}
\item A simple right $S$-module $L$ is $\D$-reflexive if and only if $\Delta\Gamma L=0$.
\end{enumerate}
\end{lemma}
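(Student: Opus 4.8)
The plan is to reduce the whole statement to one base fact: the regular module $S_S$ (resp.\ ${}_RU$) is $\D$-reflexive and $\eta^0_{S_S}$ is an isomorphism. To see this I would exploit the standing hypothesis $S=\End({}_RU)$. Since $\Delta_S(S_S)=\Hom_S(S_S,U)\cong{}_RU$ and $\Delta_R({}_RU)=\Hom_R(U,U)=\End({}_RU)=S$, the evaluation map $\delta_{S_S}$ is precisely the canonical ring isomorphism $S\to\End({}_RU)$, so $S_S$ is $\Delta$-reflexive. Moreover $S_S=\Delta_R({}_RU)\in\Im\Delta\subseteq\Cogen U=\Ker\Gamma$, so $S_S$ is $\Delta$-torsionless and $\Gamma S_S=0$. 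By Corollary~\ref{cor:H0Delta} the map $\eta^0_{S_S}$ is identified with $\delta_{S_S}$, hence is an isomorphism, and $\Delta\Gamma S_S=0$; thus $S_S$ is $\D$-reflexive, and by additivity $\eta^0_{S^n}$ is an isomorphism for every $n$. The left-hand versions are obtained symmetrically, with ${}_RU=\Delta_S(S_S)$ playing the role of $S_S$.

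Next I would prove the opening assertion of (1). Given $F\in\gen S_S$, fix an epimorphism $\pi\colon S^n\tra F$ with kernel $K$, pass to the triangle of $0\to K\to S^n\to F\to 0$, and write out the commutative ladder relating it to the tail $\cdots\to H^0\R\Delta^2 S^n\xrightarrow{\pi'}H^0\R\Delta^2 F\to 0$ of the long exact cohomology sequence of $\R\Delta^2$. Here $\pi'$ is epic and $\eta^0_{S^n}$ is an isomorphism, so from $\eta^0_F\circ\pi=\pi'\circ\eta^0_{S^n}$ the right-hand side is epic and therefore $\eta^0_F$ is epic. If $F$ is in addition $\D$-torsionless, then $\eta^0_F$ is monic by Proposition~\ref{prop:cottheorem}, hence an isomorphism.

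For the three particular cases I would argue as follows. In (a), a $\Delta$-torsionless $F$ lies in $\Cogen U\subseteq\D\Cogen U$ and has $\Gamma F=0$; the previous paragraph makes $\eta^0_F$ an isomorphism and $\Delta\Gamma F=0$, so $F$ is $\D$-reflexive. In (b), $\Gamma X$ is $\D$-torsionless by Proposition~\ref{prop:Dtorsionless} and $\D\Cogen U$ is closed under submodules, so $F$ is $\D$-torsionless and $\eta^0_F$ is an isomorphism by the second paragraph; to reach $\D$-reflexivity I would apply $\Ext^1_R(-,U)$ to $0\to F\to\Gamma X\to\Gamma X/F\to 0$ and use $\injdim U\le 1$ to conclude that $\Gamma F$ is a quotient of $\Gamma^2 X$, then apply the left exact functor $\Delta$ to this epimorphism to obtain $\Delta\Gamma F\hookrightarrow\Delta\Gamma^2 X=0$. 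In (c), write $0\to K\xrightarrow{\iota}S^n\xrightarrow{\pi}F\to 0$ with $K\in\gen S_S$, so that $\eta^0_K$ and $\eta^0_F$ are epic while $\eta^0_{S^n}$ is an isomorphism; since $\Gamma S^n=0$, the long exact sequence identifies $\Delta\Gamma F=H^{-1}\R\Delta^2 F$ with $\Ker\iota'$, where $\iota'\colon H^0\R\Delta^2 K\to H^0\R\Delta^2 S^n$. The crux is that every element of $H^0\R\Delta^2 K$ has the form $\eta^0_K(w)$ and $\iota'(\eta^0_K(w))=\eta^0_{S^n}(\iota(w))$, so injectivity of $\eta^0_{S^n}$ and of $\iota$ forces $\Ker\iota'=0$; the same bookkeeping shows $\eta^0_F$ is monic. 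Hence $\eta^0_F$ is an isomorphism and $\Delta\Gamma F=0$, so $F$ is $\D$-reflexive.

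Finally, for (2) the forward implication is immediate from Proposition~\ref{prop:cottheorem}. Conversely, a simple module $L$ is cyclic, hence $L\in\gen S_S$ and $\eta^0_L$ is epic by the second paragraph; as $L$ is simple, $\Ker\eta^0_L$ is $0$ or $L$, and if it were $L$ then $\eta^0_L=0$, forcing $H^0\R\Delta^2 L=0$, which together with $H^{-1}\R\Delta^2 L=\Delta\Gamma L=0$ contradicts Corollary~\ref{cor:cogeneratore} since $L\ne 0$; therefore $\eta^0_L$ is an isomorphism and, with $\Delta\Gamma L=0$, $L$ is $\D$-reflexive. I expect the recurring obstacle to be the vanishing of the negative cohomology $H^{-1}\R\Delta^2 F=\Delta\Gamma F$: epimorphness of $\eta^0_F$ comes cheaply from right exactness, but $\D$-reflexivity also requires $\Delta\Gamma F=0$, and this is exactly where the presentation in (c), the hypothesis $\Delta\Gamma^2 X=0$ in (b), and the cogenerator property in (2) each do the decisive work.
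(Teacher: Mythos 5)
Your proposal is correct and follows essentially the same route as the paper: the same short exact sequence $0\to K\to S^n\to F\to 0$ fed into the long exact cohomology sequence of $\R\Delta^2$ to get surjectivity of $\eta^0_F$, the same arguments for (a), (b) (the epimorphism $\Gamma^2 X\to\Gamma F\to 0$ from $\injdim U\leq 1$, then left exactness of $\Delta$), and (c), and the same use of Corollary~\ref{cor:cogeneratore} for the simple-module case. The only difference is one of explicitness: you spell out the base fact that $\eta^0_{S^n}$ (resp.\ $\eta^0_{U^n}$) is an isomorphism via $S=\End({}_RU)$ and Corollary~\ref{cor:H0Delta}, and you carry out the diagram chases in (c) in detail, both of which the paper leaves implicit.
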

\begin{proof}
1) Let $F\in \gen S_S$; then there exists a short exact sequence
\[0\to K\to S^n\to F\to 0\]
for a suitable $n\in\mathbb N$. To this exact sequence we can associate the triangle in $\D(S)$
\[K\to S^n\to F\to K[1]\]
Applying the functor $\R\Delta^2$ and considering the long exact sequences of cohomologies, we get the following commutative diagram
\[\xymatrix{
&0\ar[r]&K\ar[d]^{\eta^0_K}\ar[r]&S^n\ar[d]^\cong\ar[r]& F\ar[d]^{\eta^0_F}\ar[r]&0\\
0\ar[r]&\Delta\Gamma F\ar[r]&
H^0\R\Delta^2 K\ar[r]&\Delta^2 S^n=S^n\ar[r]&H^0\R\Delta^2 F\ar[r]&0}
\]
getting immediately the surjectivity of $\eta^0_F$.\\
1a) If $F$ is $\Delta$-torsionless, then $F$ is $\D$-torsionless and $\Delta\Gamma F=0$. Therefore $F$ is $\D$-reflexive by Proposition~\ref{prop:cottheorem}.\\
1b) By Proposition~\ref{prop:Dtorsionless} and the closure of $\D\Cogen U$ with respect to submodules, $\eta^0_F$ is an isomorphism. Applying $\Delta$ to the exact sequence $\Gamma^2 X\to \Gamma F\to 0$ we get $0\to\Delta\Gamma F\to \Delta\Gamma^2 X$. If the latter is zero, then $F$ is $\D$-reflexive.
\\
1c) If $K$ belongs to $\gen S_S$ or to $\gen {}_RU$, then $\eta^0_K$ is an epimorphism; it is also a monomorphism and hence an isomorphism. Therefore $\Delta\Gamma F=0$ and $\eta^0_F$ is an isomorphism.\\
The cases $F\in\gen {}_RU$ and $F\in\pres {}_RU$ are analogous.\\
2) The necessity of the condition follows by the definition of $\D$-reflexive module. Conversely assume $\Delta\Gamma L=0$. If $L=0$ it is trivially $\D$-reflexive. Assume $L\not=0$; since $L$ is finitely generated, then $\eta^0_L$ is an epimorphism by 1. If $\eta^0_L$ is not a monomorphism, then $\eta^0_L=0$ and hence $H^0\R\Delta^2 L=0$. By Corollary~\ref{cor:cogeneratore} we get $L=0$: contradiction.
\end{proof}

The following result establishes an important step in the characterization of $\D$-reflexive modules in terms of a linear compactness notion.
\begin{proposition}\label{prop:lincomp}
Let $_RU_S$ be a 1-cotilting bimodule and $(M\stackrel{p_{\lambda}}{\to} N_{\lambda})_{\lambda\in \Lambda}$ be an inverse system of epimorphisms. If  all the maps $\eta^0_{M}$ and $\eta^0_{N_{\lambda}}$ are isomorphisms, then $ M\to \varprojlim N_{\lambda}$ is an epimorphism.
\end{proposition}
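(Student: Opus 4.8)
The plan is to prove that the induced map $q\colon M\to N:=\varprojlim N_\lambda$ is an epimorphism by transporting the problem to the dual side through $\Delta$, where the inverse system of epimorphisms becomes a directed family of submodules, and then returning via the second dual using the hypothesis on the units. First I would record that, since $\delta_M=\beta_M\circ\eta^0_M$ (Proposition~\ref{prop:cottheorem}) and $\eta^0_M$ is an isomorphism while $\beta_M$ is an epimorphism, the evaluation map $\delta_M\colon M\to\Delta^2 M$ is an epimorphism; the same holds for every $\delta_{N_\lambda}$. Writing $p_\lambda=\pi_\lambda\circ q$ with $\pi_\lambda\colon N\to N_\lambda$ the canonical projections, the surjectivity of each $p_\lambda$ forces each $\pi_\lambda$ (and each transition map) to be an epimorphism, so that $\Im q$ maps onto every $N_\lambda$.

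Next I would apply $\Delta=\Hom(-,U)$ to the system. Being left exact and contravariant, it turns each epimorphism $p_\lambda$ into a monomorphism $\Delta p_\lambda\colon\Delta N_\lambda\hookrightarrow\Delta M$, and these realize the $\Delta N_\lambda$ as a directed family of submodules of $\Delta M$ with union $D:=\varinjlim\Delta N_\lambda$. Using the naturality square of $\delta$ with respect to $q$ together with the surjectivity of $\delta_M$, one obtains the key identity
\[\delta_N(\Im q)=\Im(\Delta^2 q),\]
since $\Im(\Delta^2 q)=\Delta^2 q(\Im\delta_M)=\Im(\delta_N\circ q)=\delta_N(\Im q)$. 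The task then splits into two parts: showing that $\Im(\Delta^2 q)=\Im\delta_N$, equivalently $N=\Im q+\Rej_U N$ (surjectivity onto the $\Delta$-torsionless quotient $N/\Rej_U N$), and showing that the $\Delta$-torsion submodule $\Rej_U N$ is already contained in $\Im q$; together these give $\Im q=N$.

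For these two points I would exploit the reflexivity of the quotients. By Proposition~\ref{prop:iffrej} the hypothesis forces $\eta^0_{M/\Rej_U M}$ and each $\eta^0_{N_\lambda/\Rej_U N_\lambda}$ to be isomorphisms, and since on $\Delta$-torsionless modules the unit $\eta^0$ coincides with the evaluation map (Corollary~\ref{cor:H0Delta}), the modules $M/\Rej_U M$ and $N_\lambda/\Rej_U N_\lambda$ are genuinely $\Delta$-reflexive; on such modules $\Delta$ behaves as an exact duality interchanging the inverse limit of epimorphisms with the directed union $D$, which governs the surjectivity onto the torsionless quotient. The main obstacle is precisely the failure of $\Delta$ to be exact: the $\Gamma=\Ext^1_R(-,U)$ terms in the long exact sequences mean that $\Delta$ converts the inverse system into the union $D$ only up to $\Delta$-torsion, so the genuine work lies in controlling these torsion contributions — ensuring that $\Rej_U N$ lies in $\Im q$ and that no nonzero cokernel survives. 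I would settle this by computing $\Delta(\Coker q)$ and $\Gamma(\Coker q)$, equivalently $H^0\R\Delta^2(\Coker q)$ and $H^{-1}\R\Delta^2(\Coker q)$, and invoking the cogenerator property of Corollary~\ref{cor:cogeneratore} to force $\Coker q=0$.
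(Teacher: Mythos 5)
Your skeleton (reduce via Proposition~\ref{prop:iffrej} to the $\Delta$-torsion and $\Delta$-torsionless parts, exploit that $\delta_M$ and the $\delta_{N_\lambda}$ are epimorphisms, and kill $\Coker q$) superficially parallels the paper's, but there is a genuine gap exactly where you yourself locate ``the genuine work''. The decisive obstruction is that $\Delta=\Hom(-,U)$ converts direct limits into inverse limits but \emph{not} inverse limits into direct limits: nothing in the hypotheses controls $\Delta N$, $\Gamma N$ or $\Delta^2 N$ for $N=\varprojlim N_\lambda$, so your assertion that on the reflexive quotients ``$\Delta$ behaves as an exact duality interchanging the inverse limit of epimorphisms with the directed union $D$'' is unjustified --- indeed it is essentially a restatement of the linear-compactness conclusion you are trying to prove, since knowing that the duality carries $\varprojlim N_\lambda$ to $\varinjlim \Delta N_\lambda$ is what surjectivity of $q$ would encode. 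For the same reason the final step is not executable as described: to invoke Corollary~\ref{cor:cogeneratore} you must compute $H^0\R\Delta^2(\Coker q)$ and $H^{-1}\R\Delta^2(\Coker q)$, which again requires applying $\Delta$ and $\Gamma$ to the uncontrolled modules $N$ and $\Coker q$. (A side slip: your ``equivalently'' is wrong, since $H^{-1}\R\Delta^2(\Coker q)=\Delta\Gamma(\Coker q)$ and $H^{0}\R\Delta^2(\Coker q)$ is an extension of $\Delta^2(\Coker q)$ by $\Gamma^2(\Coker q)$, not $\Delta(\Coker q)$ and $\Gamma(\Coker q)$ themselves; though if you could actually prove $\Delta(\Coker q)=0=\Gamma(\Coker q)$, condition (3) of Definition~\ref{definizione:1cotilting} would finish directly, with no need of the corollary.)

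What is missing is the ingredient the paper's proof is built on: $U$ is pure-injective \cite{Baz03}, hence $\Cogen U=\Ker\Gamma$ is closed under direct limits \cite{ManRuzTon03}. The paper never forms $\Delta N$. Instead it applies $\Delta$ to the sequences $0\to \Ker p_\lambda\to M\to N_\lambda\to 0$, takes \emph{direct} limits of the resulting sequences of duals --- exact, and staying inside $\Ker\Gamma$ by the closure property, which is what kills the $\Gamma$-obstruction terms in the limit --- and only then dualizes once more, using $\Delta(\varinjlim -)\cong\varprojlim\Delta(-)$, to produce surjections $\Gamma^2 M\to\varprojlim\Gamma^2 N_\lambda$ (torsion case) and $\Delta^2 M\to\varprojlim H^0\R\Delta^2(N_\lambda)$ (torsionless case); the hypothesis that the $\eta^0$'s are isomorphisms then transports these onto $\varprojlim p_\lambda$. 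Finally, the general case is glued not by a cokernel computation but by Jensen's $\varprojlim^{(1)}$ machinery: $\varprojlim^{(1)}M=0$ for the constant (weakly flabby) system, and the surjectivity already established for the torsion and torsionless parts forces $\varprojlim^{(1)}\Ker p_\lambda=0$. Until you supply a substitute for the pure-injectivity/direct-limit-closure argument, both halves of your plan ($N=\Im q+\Rej_U N$ and $\Rej_U N\subseteq\Im q$) remain unproven.
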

\begin{proof}
Let us consider the short exact sequence
\[(*)\qquad 0\to K_\lambda:=\Ker p_{\lambda}\to M\stackrel{p_{\lambda}}{\to} N_{\lambda}\to 0.\]
First let us prove the statement for $M$ being $\Delta$-torsion. In such a case also the modules $N_{\lambda}$ are $\Delta$-torsion and the map $\Gamma^2(M)\stackrel{\Gamma^2(p_{\lambda})}{\to} \Gamma^2(N_{\lambda})$ is an epimorphism. Applying $\Delta$ to $(*)$ we get the exact sequences
\[0\to \Delta(K_{\lambda})\to \Gamma(N_{\lambda})\to I_{\lambda}\to 0\text{ and }0\to I_{\lambda}\to \Gamma M \to \Gamma(K_{\lambda})\to 0.\]
Since $U$ is pure injective \cite{Baz03}, $\Cogen U$ is closed under direct limits \cite[Proposition~1.3]{ManRuzTon03}. Applying first the direct limit and then $\Delta$, we  obtain the epimorphism \[\Gamma^2(M)\to \Gamma(\varinjlim \Gamma(N_{\lambda}))=\varprojlim\Gamma^2 N_\lambda\to 0.\]
From the commutative diagram with exact rows
\[
\xymatrix{
{} \Gamma^2(M)\ar[r]\ar@{=}[d]&\Gamma(\varinjlim\Gamma N_\lambda)\ar[r]&0\\
{}\Gamma^2 M\ar[r]&\varprojlim \Gamma^2 N_\lambda\ar[r]\ar@{=}[u]& 0\\
 M\ar[u]^\cong_{\eta^0_M}\ar[r]^{\varprojlim p_\lambda}&\varprojlim N_\lambda\ar[u]^\cong_{\varprojlim \eta^0_{N_\lambda}}
}\]
we conclude that ${\varprojlim p_\lambda}$ is an epimorphism.

\noindent Assume now $M$ is $\Delta$-torsionless. From the exact sequence $0\to K_{\lambda}\to M\to N_{\lambda}\to 0$ we get the long exact sequence
\[\Delta\Gamma N_\lambda=H^1\R\Delta^2 N_\lambda\to \Delta^2 K_\lambda\to\Delta^2 M\to H^0\R\Delta^2(N_{\lambda})\to 0\]
Denoted by $I_{\lambda}$ the cokernel of $\Delta p_\lambda$, we have the following commutative diagram
\[
\xymatrix{0 \ar[r]& \Delta(I_{\lambda})\ar[d]\ar[r] & \Delta^2(M)\ar[d] \ar[r] & \Delta^2(N_{\lambda})\ar@{=}[d] \ar[r] & 0\\
0 \ar[r] & \Gamma^2(N_{\lambda})\ar[r] \ar[d]& H^0\R\Delta^2(N_{\lambda})\ar[r] \ar[d]& \Delta^2(N_{\lambda})\ar[r] & 0\\
& 0 & 0}
\] 
Consider the exact sequences 
$$0\to \varinjlim \Delta(N_{\lambda})\to \Delta(M)\to \varinjlim I_{\lambda}\to 0 \text{ and}$$
$$0\to \varinjlim I_{\lambda}\to \varinjlim \Delta(K_{\lambda})\to \varinjlim \Gamma(N_{\lambda})\to 0.$$
Since $I_\lambda\in\Ker\Gamma_S$ and the latter is closed under direct limits, we get the exact sequence
$$0\to \Delta(\varinjlim I_{\lambda})\cong \varprojlim \Delta(I_{\lambda})\to \Delta^2(M)\to \Delta(\varinjlim \Delta(N_{\lambda}))\cong \varprojlim \Delta^2(N_{\lambda})\to 0$$ and the epimorphism
$$\Delta(\varinjlim I_{\lambda})\cong \varprojlim \Delta(I_{\lambda})\to \Gamma(\varinjlim \Gamma(N_{\lambda}))\cong \varprojlim \Gamma^2(N_{\lambda})\to 0.$$
Passing to inverse limits in the previous diagram, we obtain the commutative diagram with exact rows
$$
\xymatrix{0 \ar[r]& \varprojlim \Delta(I_{\lambda})\ar[d]\ar[r] & \Delta^2(M)\ar[d] \ar[r] & \varprojlim\Delta^2(N_{\lambda})\ar@{=}[d] \ar[r] & 0\\
0 \ar[r] & \varprojlim \Gamma^2(N_{\lambda})\ar[r] \ar[d]& \varprojlim H^0\R\Delta^2(N_{\lambda})\ar[r] &\varprojlim  \Delta^2(N_{\lambda})\\
& 0 & }
$$
which implies that $\varprojlim H^0\R\Delta^2(p_{\lambda})$ is an epimorphism. Hence from 
$$
\xymatrix{M\ar[d]^\cong_{\eta^0_M}\ar[rrrr] &&&& \varprojlim N_{\lambda} \ar[d]^\cong_{\varprojlim \eta^0_{N_{\lambda}}} & \\
\Delta^2(M)\ar[rrrr]^{\varprojlim H^0\R\Delta^2(p_{\lambda})} &&& &\varprojlim H^0\R\Delta^2(N_{\lambda}) \ar[r] & 0}
$$
we conclude that $\varprojlim p_{\lambda}$ is an epimorphism.

\noindent Finally, we consider the general case. Let $M$ be a module; consider its $\Delta$-torsion and $\Delta$-torsionless parts and the commutative diagram
$$
\xymatrix{0 \ar[r] &  \Rej_U M \ar[r]\ar[d]^{t_{\lambda}} &  M \ar[r]\ar[d]^{p_{\lambda}} &  M/{\Rej_U M}\ar[r]\ar[d]^{s_{\lambda}} & 0\\
0 \ar[r] &  \Rej_U N_{\lambda} \ar[r] &  N_{\lambda} \ar[r]\ar[d] &  N_{\lambda}/{\Rej_U N_{\lambda}}\ar[r]\ar[d] & 0\\
& & 0 & 0}
$$
It easy to check that the $\eta^0$'s of all the modules involved in the above diagram are isomorphisms. 
In order to get the thesis, we have to show that $\varprojlim^{(1)} \ker p_{\lambda}=0$: indeed, $\varprojlim^{(1)}M=0$ since trivially $(M)_{\lambda\in\Lambda}$ is a \emph{weakly flabby} inverse system \cite[Theorem~1.8]{Jen72}. Applying to the above diagram the snake lemma we get the following diagram
\[\xymatrix@-1pc{
0\ar[r] &\Ker t_\lambda\ar[r]& \Ker p_\lambda\ar[rr]\ar@{->>}[dr]&&
\Ker s_\lambda\ar[r]\ar@{^(->}[d]&\Coker t_\lambda\ar[r]& 0\\
&&0\ar[r]&J_\lambda\ar@{^(->}[ur]\ar[r]_-{j_\lambda}&M/\Rej_U M\ar@{->>}[d]\ar[r]_-{q_\lambda}&\Coker j_\lambda=: L_\lambda\ar[r]&0\\
&&&&N_\lambda/\Rej_UN_\lambda}\]
Then we get the exact sequence
$$0\to {\Coker t_{\lambda}}\to L_{\lambda}\to N_{\lambda}/{\Rej_U N_{\lambda}}\to 0$$
The maps $\eta^0_{L_{\lambda}}$ and $\eta^0_{\Im t_{\lambda}}$ are isomorphisms. Indeed, consider the exact sequence
\[0\to \Im t_\lambda\to \Rej_U N_{\lambda}\to \Coker t_\lambda\to 0;\]
 $\Im t_{\lambda}$ is a submodule of $\Rej_U N_{\lambda}$ and a factor of $\Rej_U M$; since $\eta^0_{\Rej_U N_{\lambda}}$ and $\eta^0_{\Rej_U M}$ are isomorphisms, and hence respectively a monomorphism and an epimorphism, also $\eta^0_{\Im t_{\lambda}}$ is an isomorphism. It follows that  $\eta^0_{\Coker t_{\lambda}}$ is an isomorphism. Since $\eta^0_{N_{\lambda}/{\Rej_U N_{\lambda}}}$ is an isomorphism, we get that also $\eta^0_{L_{\lambda}}$  is an isomorphism. Thus we can apply the results  proved above to get that $\varprojlim t_{\lambda}$ is an epimorphism and hence $\varprojlim^{(1)} \ker t_{\lambda}=0$, and $\varprojlim q_{\lambda}$ is an epimorphism and hence $\varprojlim^{(1)} J_{\lambda}=0$. So we conclude  that $\varprojlim^{(1)} \ker p_{\lambda}=0$.
\end{proof}

Thanks to the previous proposition we can generalize to the 1-cotilting case a wellknown result of Osofsky for Morita dualities \cite[Lemma~13]{Oso1966}.
\begin{proposition}\label{prop:Osofsky}
Let $_RU_S$ be a 1-cotilting bimodule and $M$ an infinite direct sum of non-zero modules. Then $\eta^0_M$ is not an isomorphism; in particular $M$ can not be $\D$-reflexive.
\end{proposition}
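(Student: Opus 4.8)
The plan is to argue by contradiction, the substantive engine being the linear-compactness statement of Proposition~\ref{prop:lincomp}. Write $M=\bigoplus_{i\in I}M_i$ with $I$ infinite and every $M_i\neq 0$, and suppose toward a contradiction that $\eta^0_M$ is an isomorphism. I would index an inverse system by the directed set $\Lambda$ of finite subsets $F\subseteq I$ ordered by inclusion, setting $N_F:=\bigoplus_{i\in F}M_i$, with transition maps and canonical projections $p_F\colon M\tra N_F$ supplied by the direct-sum decomposition. Each $p_F$ is a (split) epimorphism compatible with the transition maps, so $(M\xrightarrow{p_F}N_F)_{F\in\Lambda}$ is an inverse system of epimorphisms of the type required by Proposition~\ref{prop:lincomp}.

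The key reduction is that for every finite $F$ the map $\eta^0_{N_F}$ is again an isomorphism. Indeed $N_F$ is a direct summand of $M$, say $M=N_F\oplus M'$ with $M'=\bigoplus_{i\notin F}M_i$, and the functor $H^0\R\Delta^2$ is additive, being obtained from the additive (triangulated) functor $(\R\Delta)^2$ followed by the cohomology $H^0$. Hence it carries this finite biproduct to a biproduct, and by naturality of $\eta$ the map $\eta^0_M$ is identified with $\eta^0_{N_F}\oplus\eta^0_{M'}$; a finite direct sum of morphisms is an isomorphism precisely when each summand is, so $\eta^0_{N_F}$ is an isomorphism. With all $\eta^0_{N_F}$ and $\eta^0_M$ isomorphisms, Proposition~\ref{prop:lincomp} applies and yields that the canonical map $M\to\varprojlim_F N_F$ is an epimorphism.

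It remains to identify this map. The inverse limit $\varprojlim_F N_F$ along the projections is exactly the product $\prod_{i\in I}M_i$, and the canonical map from $M$ is the natural inclusion $\bigoplus_{i\in I}M_i\hookrightarrow\prod_{i\in I}M_i$. Since $I$ is infinite and every $M_i$ is nonzero, this inclusion is not surjective, as any element of $M$ has finite support. This contradicts the epimorphism just obtained, so $\eta^0_M$ cannot be an isomorphism. Finally, since $\D$-reflexivity forces $\eta^0_M$ to be an isomorphism by Proposition~\ref{prop:cottheorem}, $M$ cannot be $\D$-reflexive.

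The genuine input is Proposition~\ref{prop:lincomp}; the only delicate point in the present argument is the passage from $\eta^0_M$ to $\eta^0_{N_F}$, which is why I isolate the additivity of $H^0\R\Delta^2$ together with the naturality of $\eta$. Everything else—the identification of the inverse limit with the product and the failure of surjectivity of $\bigoplus\hookrightarrow\prod$—is elementary, and it is exactly here that the infinitude of the index set is decisive.
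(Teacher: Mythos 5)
Your proof is correct and takes essentially the same route as the paper's: the same inverse system $(M\twoheadrightarrow M_F)_{F}$ over finite subsets of $I$, the same reduction establishing that each $\eta^0_{M_F}$ is an isomorphism (the paper deduces this by chasing the two commutative diagrams coming from the split exact sequences $0\to M_F\to M\to M_{(F)}\to 0$ and $0\to M_{(F)}\to M\to M_F\to 0$, you by additivity of $H^0\R\Delta^2$ and naturality of $\eta$, which amounts to the same thing), and the same application of Proposition~\ref{prop:lincomp}. The only cosmetic difference is at the end: the paper exhibits an explicit compatible family $(m_F)_F$ with all chosen components nonzero and notes a preimage would have infinite support, whereas you identify $\varprojlim_F M_F$ with $\prod_{i\in I}M_i$ and observe the inclusion $\bigoplus_{i\in I}M_i\hookrightarrow\prod_{i\in I}M_i$ is not surjective --- the identical contradiction.
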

\begin{proof}
Let $M=\oplus_{i\in I}M_i$ be a direct sum of non-zero modules. For each finite subset $F\subseteq I$, set $M_{(F)}=\oplus_{j\not\in F}M_j$ and $M_F=\oplus_{j\in F}M_j$. Consider the inverse system of epimorphisms $(M\stackrel{p_F}{\to}M_F)_F$, where the $p_F$'s are the canonical projections, and the following commutative diagrams
\[\xymatrix@-1pc{0\ar[r]&M_F\ar[d]^{\eta^0_{M_F}}\ar[r]& M\ar[d]^{\eta^0_{M}}\ar[r]& M_{(F)}\ar[d]^{\eta^0_{M_{(F)}}}\ar[r]&0\\
&H^{0}\R\Delta^2M_F\ar[r]&H^{0}\R\Delta^2M\ar[r]&H^{0}\R\Delta^2M_{(F)}\ar[r]&0}
\]
\[\xymatrix@-1pc{0\ar[r]&M_{(F)}\ar[d]^{\eta^0_{M_{(F)}}}\ar[r]& M\ar[d]^{\eta^0_{M}}\ar[r]& M_F\ar[d]^{\eta^0_{M_{F}}}\ar[r]&0\\
&H^{0}\R\Delta^2M_{(F)}\ar[r]&H^{0}\R\Delta^2M\ar[r]&H^{0}\R\Delta^2M_{F}\ar[r]&0}
\]
Assume $\eta^0_M$ is an isomorphism; then both $\eta^0_{M_{(F)}}$ and $\eta^0_{M_{F}}$ are isomorphisms. Then, by Proposition~\ref{prop:lincomp}, 
the inverse limit
\[\varprojlim p_F:M\to\varprojlim M_F\]
is surjective. Let $0\not=m_i\in M_i$ for each $i\in I$, and $m_F\in M_F$ the element whose $j$-component is $m_j$ for each $j\in F$.
Clearly $(m_F)_{F\subseteq I}$ is an element of $\varprojlim M_F$; therefore there exists $m\in M$ such that $p_F(m)=m_F$ for each finite subset $F\subseteq I$. This element $m$ has all its components different from zero: therefore $I$ is finite.
\end{proof}

\begin{corollary}
Let $_RU_S$ be a 1-cotilting bimodule with $S=\End({}_RU)$. A semisimple right $S$-module $M$ is $\D$-reflexive if and only if it is finitely generated and $\Delta\Gamma M=0$.
\end{corollary}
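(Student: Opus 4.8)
The plan is to reduce the corollary to the simple case treated in Lemma~\ref{lemma:fingengenerale}(2), handling the two implications separately and using the Osofsky-type Proposition~\ref{prop:Osofsky} to control the size of $M$.

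For the necessity, I would start from the observation that, by Proposition~\ref{prop:cottheorem}, a $\D$-reflexive module automatically satisfies $\Delta\Gamma M = 0$ and has $\eta^0_M$ an isomorphism; thus the condition $\Delta\Gamma M = 0$ comes for free. To get finite generation I would write the semisimple module as $M = \bigoplus_{i\in I} L_i$ with each $L_i$ simple, and argue that $I$ must be finite: were it infinite, $M$ would be an infinite direct sum of non-zero modules, so Proposition~\ref{prop:Osofsky} would force $\eta^0_M$ to fail to be an isomorphism, contradicting $\D$-reflexivity.

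For the sufficiency, assuming $M$ finitely generated with $\Delta\Gamma M = 0$, I would use that a finitely generated semisimple module is a \emph{finite} direct sum $M = \bigoplus_{i=1}^n L_i$ of simples. Since $\Delta = \Hom_S(-,U)$ and $\Gamma = \Ext^1_S(-,U)$ are additive and the sum is finite, $\Delta\Gamma M \cong \bigoplus_{i=1}^n \Delta\Gamma L_i$, so the hypothesis gives $\Delta\Gamma L_i = 0$ for every $i$; by Lemma~\ref{lemma:fingengenerale}(2) each $L_i$ is then $\D$-reflexive. To conclude that the finite direct sum $M$ is $\D$-reflexive I would invoke that the $\D$-reflexive modules form an exact, hence additive, subcategory of $\rMod S$ \cite[Theorem~3.10]{ManTon10}, and are therefore closed under finite direct sums; alternatively one assembles $M$ through the iterated extensions $0\to \bigoplus_{i=1}^{k}L_i\to \bigoplus_{i=1}^{k+1}L_i\to L_{k+1}\to 0$ and applies closure under extensions by induction.

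The only step that requires more than additivity and the definition of $\D$-reflexivity is the necessity of finite generation: this is exactly where Proposition~\ref{prop:Osofsky} is indispensable, since it provides the Osofsky-type obstruction ruling out infinite semisimple $\D$-reflexive modules; everything else is a routine reduction to the simple case.
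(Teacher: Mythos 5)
Your proof is correct and follows essentially the same route as the paper: necessity via Proposition~\ref{prop:cottheorem} and the Osofsky-type Proposition~\ref{prop:Osofsky}, sufficiency by splitting the finite semisimple sum, using additivity to get $\Delta\Gamma L_i=0$ for each simple summand, and applying Lemma~\ref{lemma:fingengenerale}(2). Your explicit justification of closure under finite direct sums (via the exact-subcategory result of \cite{ManTon10}, or by additivity of $\eta^0$) merely spells out a step the paper leaves implicit.
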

\begin{proof}
By Propositions~\ref{prop:cottheorem}, \ref{prop:Osofsky} the conditions $M$ finitely generated and $\Delta\Gamma M=0$ are clearly necessary. They are also sufficient. Let $M=\bigoplus_{i=1}^n L_i$ with $L_i$ simple right $S$-modules; then $0=\Delta\Gamma M=\bigoplus_{i=1}^n \Delta\Gamma L_i$ implies by Lemma~\ref{lemma:fingengenerale} that the $L_i$'s are $\D$-reflexive and hence $M$ is $\D$-reflexive.
\end{proof}

If a ring involved in a cotilting duality is noetherian, then, as in the Morita setting, it is necessarily semiperfect:

\begin{proposition}\label{prop:semiperfect}
Let $_RU_S$ be a 1-cotilting bimodule with $S=\End ({}_RU)$. If $S$ is noetherian, then it is linearly compact and hence semiperfect.
\end{proposition}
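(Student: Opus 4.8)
The plan is to establish that $S_S$ is linearly compact by a direct application of Proposition~\ref{prop:lincomp}, and then to invoke the classical fact that a (right) linearly compact ring is semiperfect.

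First I would check that the two ``endpoints'' fed into Proposition~\ref{prop:lincomp} have isomorphic evaluation maps. Since $S=\End({}_RU)$ we have $S_S=\Delta_R({}_RU)\in\Im\Delta_R\subseteq\Cogen U_S$, so $S_S$ is $\Delta$-torsionless; as $S_S\in\gen S_S$, Lemma~\ref{lemma:fingengenerale}(1a) gives that $S_S$ is $\D$-reflexive, whence $\eta^0_{S_S}$ is an isomorphism. Next, because $S$ is noetherian every submodule $K\leq S_S$ is finitely generated, hence $K\in\gen S_S$; applied to the canonical projection $S\twoheadrightarrow S/K$ this shows $S/K\in\pres S_S$, so $S/K$ is $\D$-reflexive by Lemma~\ref{lemma:fingengenerale}(1c) and $\eta^0_{S/K}$ is an isomorphism for \emph{every} submodule $K\leq S_S$.

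Then I would reduce linear compactness to the surjectivity conclusion of Proposition~\ref{prop:lincomp}. Recall that $S_S$ is linearly compact precisely when every family of cosets $\{x_\lambda+K_\lambda\}_{\lambda\in\Lambda}$ (with $K_\lambda\leq S_S$, $x_\lambda\in S$) having the finite intersection property has nonempty total intersection. Given such a family, for each finite $F\subseteq\Lambda$ put $K_F=\bigcap_{\lambda\in F}K_\lambda$ and pick $x_F$ with $\bigcap_{\lambda\in F}(x_\lambda+K_\lambda)=x_F+K_F$; ordering finite subsets by inclusion yields a directed inverse system of epimorphisms $(S\twoheadrightarrow S/K_F)_F$ carrying a compatible thread $(x_F+K_F)_F\in\varprojlim S/K_F$. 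By the previous step $\eta^0_{S}$ and all $\eta^0_{S/K_F}$ are isomorphisms, so Proposition~\ref{prop:lincomp} gives that $S\to\varprojlim S/K_F$ is surjective; any preimage $x$ of the thread satisfies $x\in x_\lambda+K_\lambda$ for all $\lambda$, so $\bigcap_\lambda(x_\lambda+K_\lambda)\neq\emptyset$. Hence $S_S$ is linearly compact, and a right linearly compact ring is semiperfect (see, e.g., \cite{Wis91}).

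The one genuinely delicate point is the passage from the coset (finite intersection property) formulation of linear compactness to the \emph{directed} inverse system demanded by Proposition~\ref{prop:lincomp}. This is exactly where noetherianity of $S$ is essential: it guarantees that each finite intersection $K_F$ is finitely generated, so that every quotient $S/K_F$ appearing in the reduction still lies in $\pres S_S$ and hence has $\eta^0_{S/K_F}$ an isomorphism, which is the hypothesis that makes Proposition~\ref{prop:lincomp} applicable.
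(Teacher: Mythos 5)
Your proof is correct and follows essentially the same route as the paper's: both verify via Lemma~\ref{lemma:fingengenerale} (you use (1a) for $S$ itself and (1c) for the quotients, with noetherianity supplying finitely generated kernels, exactly as the paper does) that $\eta^0_S$ and the $\eta^0$'s of the quotients are isomorphisms, and then apply Proposition~\ref{prop:lincomp} plus the classical fact that a right linearly compact ring is semiperfect. The only difference is cosmetic: the paper takes the inverse-limit characterization of linear compactness as given, whereas you explicitly unfold the coset/finite-intersection formulation into a directed inverse system of quotients $S/K_F$ --- a standard equivalence that you carry out correctly.
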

\begin{proof}
Let $\xymatrix{S\ar@{->>}[r]^{p_\lambda}&N_\lambda}$, $\lambda\in\Lambda$, be an inverse system of epimorphisms. By Lemma \ref{lemma:fingengenerale}, (1c), $\eta^0_S$ and $\eta^0_{N_\lambda}$ are isomorphisms; then by Proposition~\ref{prop:lincomp} also $\varprojlim p_\lambda:S\to \varprojlim N_\lambda$ is an epimorphism and hence $S_S$ is linearly compact. Then by
\cite[Corollary~3.14]{Xue92} we conclude $S$ is semiperfect.
\end{proof}

\begin{proposition}\label{prop:arti}
Let $_RU_S$ be a $1$-cotilting bimodule with $S=\End ({}_RU)$. If $R$ is a left artinian ring, then the $\D$-reflexive left $R$-modules are finitely generated. In particular $_RU$ itself is finitely generated.
\end{proposition}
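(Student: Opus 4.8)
The plan is to reduce everything to the behaviour of the radical quotient $M/JM$ and to feed Proposition~\ref{prop:lincomp} an inverse system of semisimple quotients. First I would record the standing consequences of $R$ being left artinian: it is left Noetherian, so finitely generated left modules are closed under submodules and extensions and coincide with the finitely related ones; hence $\Delta\Gamma$ vanishes on every finitely generated module by Lemma~\ref{lemma:Bon}. It is also left perfect, so $J:=J(R)$ is nilpotent and $JM$ is superfluous in every module $M$; consequently $M$ is finitely generated if and only if $M/JM$ is, i.e. if and only if $M/JM$ is a \emph{finite} direct sum of simple modules. Finally, since $\D$-reflexive modules are closed under extensions \cite[Theorem~3.10]{ManTon10} and, being detected by the additive transformation $\eta^0$, under finite direct sums, once each simple module is shown to be $\D$-reflexive so is every module of finite length.

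The engine of the argument is the following contradiction. Suppose $M$ is $\D$-reflexive and write $M/JM=\bigoplus_{i\in I}S_i$ with all $S_i$ simple. For a finite $F\subseteq I$ let $U_F\leq M$ be the preimage of $\bigoplus_{i\notin F}S_i$, so that $M/U_F\cong\bigoplus_{i\in F}S_i$. If each such finite sum is $\D$-reflexive, then in the inverse system of epimorphisms $(M\to M/U_F)_F$ all the maps $\eta^0_M$ and $\eta^0_{M/U_F}$ are isomorphisms, so Proposition~\ref{prop:lincomp} forces $M\to\varprojlim_F M/U_F=\prod_{i\in I}S_i$ to be epic; but this map is the composite $M\tra M/JM=\bigoplus_{i\in I}S_i\hookrightarrow\prod_{i\in I}S_i$, whose image is $\bigoplus_{i\in I}S_i$. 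Hence $\bigoplus_{i\in I}S_i=\prod_{i\in I}S_i$, forcing $I$ finite and $M$ finitely generated. This is the same mechanism as in Proposition~\ref{prop:Osofsky}, and it reduces the whole statement to showing that the semisimple modules occurring as $M/U_F$ are $\D$-reflexive.

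I would first run this engine on $M={}_RU$, where the required reflexivity is immediate: $_RU\in\gen {}_RU$ is $\Delta$-torsionless, hence $\D$-reflexive by Lemma~\ref{lemma:fingengenerale}(1a), and every simple \emph{quotient} $S$ of $U$ again lies in $\gen {}_RU$, so $\eta^0_S$ is epic by Lemma~\ref{lemma:fingengenerale}(1). Since $\Delta\Gamma S=0$, were $\eta^0_S=0$ we would get $H^0\R\Delta^2 S=0=H^{-1}\R\Delta^2 S$ and hence $S=0$ by Corollary~\ref{cor:cogeneratore}; therefore $\eta^0_S$ is an isomorphism and $S$, together with each finite sum of such simples, is $\D$-reflexive. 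The engine then yields that $U/JU$, and hence $_RU$, is finitely generated.

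The remaining, and genuinely harder, point is the general case, where the simples appearing in $M/JM$ are \emph{arbitrary} simple left $R$-modules and need no longer lie in $\gen {}_RU$, so Lemma~\ref{lemma:fingengenerale} does not apply. Here the plan is to exploit that $_RU$ is now of finite length in order to propagate finiteness through $\Gamma$: for finitely generated $L$ the module $\Gamma L=\Ext^1_R(L,U)$ is a subquotient of a finite power of $U_S$, which I would use to control $\Delta\Gamma^2 L$ (and, if necessary, $\Delta\Gamma^3 L$); combined with the torsion/torsionless decomposition of Proposition~\ref{prop:iffrej} and the vanishing statements of Lemma~\ref{prop:key1}(5),(6) and Proposition~\ref{prop:Dtorsionless}, this should give that $\eta^0_L$ is an isomorphism for \emph{every} finite length $L$. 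Feeding this back into the engine applied to $M$ then gives $M/JM$ finitely generated, hence $M$ finitely generated. The main obstacle is precisely this last step: upgrading ``simple quotients of $U$ are $\D$-reflexive'' to ``all simple modules are $\D$-reflexive'', which requires controlling the higher compositions $\Delta\Gamma^{k}$ on finite length modules, and this is exactly where the finiteness of $_RU$ established above must be used.
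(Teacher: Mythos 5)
Your overall skeleton --- pass to the radical quotient $M/JM$, feed an inverse system of semisimple quotients into Proposition~\ref{prop:lincomp}, conclude finiteness of $M/JM$ by the Osofsky mechanism, then lift to $M$ via superfluousness of $JM$ --- is exactly the paper's; indeed your ``engine'' is a re-derivation of the proof of Proposition~\ref{prop:Osofsky}. But there is a genuine gap, and it is the one you flag yourself: your route requires that every simple left $R$-module occurring in $M/JM$ (and ultimately every finite-length module) be $\D$-reflexive, and you never prove this. Moreover, the tools you propose for it are inapplicable here: Lemma~\ref{lemma:Bon}(1), which you invoke to get $\Delta\Gamma=0$ on finitely related modules, has $R=\End(U_S)$ as hypothesis, whereas the proposition assumes only $S=\End({}_RU)$. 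The same unjustified vanishing already infects your warm-up case $M={}_RU$, where ``$\Delta\Gamma S=0$'' for a simple quotient $S$ of $U$ is asserted without proof, yet Corollary~\ref{cor:cogeneratore} needs precisely $H^{-1}\R\Delta^2 S=\Delta\Gamma S=0$. Worse, your interim target is out of reach at this level of generality: since $\D$-reflexive modules are closed under extensions, ``all simples are $\D$-reflexive'' would make the finite-length module $_RR$ $\D$-reflexive; as $\Gamma R=0$ and $_RR$ is $\Delta$-torsionless ($_RU$ is faithful), this says exactly that the canonical map $R\to\End(U_S)$ is an isomorphism --- i.e.\ it would force the faithful balance which the paper deliberately adds only in the corollary \emph{following} this proposition. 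So the finiteness-propagation-through-$\Gamma$ you sketch cannot close the gap without new input equivalent to balancedness.

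The missing idea is that no global reflexivity statement is needed: it suffices that $\eta^0_{M/JM}$ be an isomorphism for the \emph{given} $\D$-reflexive $M$, and this the paper extracts from $M$ itself. Epimorphy is immediate: since $H^i\R\Delta^2$ vanishes on modules for $i\neq 0,-1$ (Proposition~\ref{prop:cottheorem}), the map $H^0\R\Delta^2 M\to H^0\R\Delta^2(M/JM)$ is onto, and $\eta^0_M$ is an isomorphism. For injectivity the paper argues by contradiction on a simple submodule $L$ of $\Ker\eta^0_{M/JM}$: semisimplicity splits the inclusion $i\colon L\to M/JM$, so $H^0\R\Delta^2(i)$ is a split monomorphism, while $H^0\R\Delta^2(i)\circ\eta^0_L=\eta^0_{M/JM}\circ i=0$; invoking Lemma~\ref{lemma:fingengenerale}, (1c), for this single simple $L$ gives $\eta^0_L$ an isomorphism, whence $L=0$. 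With $\eta^0_{M/JM}$ an isomorphism one quotes Proposition~\ref{prop:Osofsky} verbatim: its hypothesis is only that $\eta^0$ of the infinite direct sum be an isomorphism, and the isomorphisms on the finite sub- and co-summands are produced \emph{inside} its proof from the split exact sequences --- no reflexivity of the simple summands is ever required. This is precisely why anchoring Proposition~\ref{prop:lincomp} at $M\twoheadrightarrow M/U_F$, as you do, stalls (you must certify the semisimple quotients independently), while anchoring it at $M/JM$ gets those certificates for free from splitness. Finally, the statement about $_RU$ needs no separate engine run: $U\in\gen{}_RU$ is $\Delta$-torsionless, hence $\D$-reflexive by Lemma~\ref{lemma:fingengenerale}, (1a), and the general claim applies to it.
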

\begin{proof}
Given a $\D$-reflexive left $R$-module $_RM$, let us consider the semisimple module $M/JM$. Clearly $\eta^0_{M/JM}$ is an epimorphism; it is also a monomorphism. Indeed, assume $\Ker\eta^0_{M/JM}\not=0$ and let $L\not=0$ be a simple submodule of $\Ker\eta^0_{M/JM}$; by Lemma~\ref{lemma:fingengenerale}, (1c), $L$ is $\D$-reflexive and hence $\eta^0_L$ is an isomorphism. Since the inclusion $i:L\to M/JM$ is a splitting monomorphism, also $H^0\R\Delta^2(i):H^0\R\Delta^2 L\to H^0\R\Delta^2 (M/JM)$ is a splitting mono. Then by
\[H^0\R\Delta^2(i)\circ\eta^0_L=\eta^0_{M/JM}\circ i=0\]
we get $L=0$: contradiction. Thus $\eta^0_{M/JM}$ is an isomorphism and, by Proposition~\ref{prop:Osofsky}, $M/JM$ is finitely generated. Since an artinian ring is semiprimary, $JM$ is superfluous in $M$ and hence $M$ is finitely generated by \cite[Theorem~10.4]{AndFul74}.
\end{proof}
\begin{corollary}
Let $_RU_S$ be a faithfully balanced $1$-cotilting bimodule. If $R$ is a left artinian ring, then the $\D$-reflexive left $R$-modules are exactly the finitely generated modules.
\end{corollary}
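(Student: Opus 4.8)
The plan is to prove the two inclusions separately. That every $\D$-reflexive left $R$-module is finitely generated is exactly Proposition~\ref{prop:arti}, since a faithfully balanced bimodule in particular satisfies $S=\End({}_RU)$. So the whole content is the converse: every finitely generated left $R$-module is $\D$-reflexive. The first thing I would observe is that the faithfully balanced hypothesis is precisely what forces the regular module $_RR$ to be $\D$-reflexive. Indeed $_RR$ is projective, so $\Gamma({}_RR)=\Ext^1_R(R,U)=0$ and $_RR$ is $\Delta$-torsionless; by Corollary~\ref{cor:H0Delta} it is then $\D$-reflexive as soon as it is $\Delta$-reflexive, i.e. as soon as the evaluation map $\delta_R\colon R\to\Delta^2R$ is an isomorphism. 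Now $\Delta^2R=\Delta_S\Delta_R({}_RR)=\Delta_S(U_S)=\End(U_S)$, and $\delta_R$ is the canonical ring homomorphism $R\to\End(U_S)$, which is bijective exactly because $_RU_S$ is faithfully balanced. Hence $_RR$, and therefore every finite direct sum $R^n$ (the $\D$-reflexive modules form an exact subcategory), is $\D$-reflexive.

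Next I would reduce to finitely presented modules: since $R$ is left artinian it is left noetherian, so every finitely generated left $R$-module is finitely presented, and it suffices to prove that each $F\in\pres{}_RR$ is $\D$-reflexive. For this I would rerun the argument of Lemma~\ref{lemma:fingengenerale}, (1) and (1c), with $_RR$ playing the role there played by the $\Delta^2$-fixed generator $S_S$. Choosing a presentation $0\to K\to R^n\to F\to 0$ with $K$ finitely generated, I apply $\R\Delta^2$ and Proposition~\ref{prop:cottheorem} to obtain a commutative diagram with exact rows
\[\xymatrix{
&0\ar[r]&K\ar[d]^{\eta^0_K}\ar[r]&R^n\ar[d]^\cong\ar[r]& F\ar[d]^{\eta^0_F}\ar[r]&0\\
0\ar[r]&\Delta\Gamma F\ar[r]& H^0\R\Delta^2 K\ar[r]&\Delta^2R^n=R^n\ar[r]&H^0\R\Delta^2 F\ar[r]&0}\]
in which the middle vertical map is an isomorphism because $R^n$ is $\D$-reflexive. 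This already gives that $\eta^0_F$ is epic, and applied to $K$ it gives that $\eta^0_K$ is epic. On the other hand $K$ is a submodule of the $\D$-torsionless module $R^n$, and $\D\Cogen U$ is closed under submodules, so $\eta^0_K$ is also monic, hence an isomorphism. A diagram chase then forces $\Delta\Gamma F=0$ and $\eta^0_F$ an isomorphism, so $F$ is $\D$-reflexive by Proposition~\ref{prop:cottheorem}.

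The only genuinely new point, and the one I expect to be the crux, is contained in the first paragraph: the proof of Lemma~\ref{lemma:fingengenerale} works with the generators $S_S$ and $_RU$, which satisfy $\Delta^2(-)\cong(-)$ only because $S=\End({}_RU)$; here one instead needs $_RR$ to be $\Delta^2$-fixed, and this is exactly the extra strength of the faithfully balanced condition $R=\End(U_S)$. Once $_RR$ is known to be $\D$-reflexive, everything else is the left--right symmetric transcription of Lemma~\ref{lemma:fingengenerale}, (1c), combined with the noetherian reduction from finitely generated to finitely presented modules; indeed the conclusion can be read as the identification of the $\D$-reflexive left $R$-modules with $\pres{}_RR$, which over a left artinian ring is precisely the class of finitely generated modules.
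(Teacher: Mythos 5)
Your proposal is correct and follows essentially the same route as the paper: the paper's two-sentence proof invokes the left--right symmetric version of Lemma~\ref{lemma:fingengenerale}, (1c) (valid because $R=\End(U_S)$ makes $_RR$ $\Delta$-reflexive, hence $\D$-reflexive) together with Proposition~\ref{prop:arti}, and your argument is exactly this with the symmetric transcription written out in full. The details you make explicit --- that faithful balance is what identifies $\delta_R\colon R\to\Delta^2 R=\End(U_S)$ as an isomorphism, and that left artinian is needed so that $\gen{}_RR=\pres{}_RR$ (since the lemma's conclusion requires $\pres$, not merely $\gen$) --- are precisely what the paper leaves implicit, not a different method.
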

\begin{proof}
Since $R=\End (U_S)$, the finitely generated left $R$-modules are $\D$-reflexive. Then we conclude by Proposition~\ref{prop:arti}.
\end{proof}
\section{Characterization of $\D$-reflexive modules}
In this section we characterize the $\D$-reflexive modules in terms of a linear compactness and a density notions. Proposition~\ref{prop:iffrej} suggests us to characterize separately the $\Delta$-torsion and the $\Delta$-torsionless $\D$-reflexive modules.

\subsection{$\D$-reflexive $\Delta$-torsionless modules}\label{sec:Drifltorsionless}

If $M$ is a $\Delta$-torsionless module, then it is $\D$-torsionless and $\Delta\Gamma M=0$: in particular by Lemma~\ref{lemma:fingengenerale} all finitely generated $\Delta$-torsionless modules are $\D$-reflexive.
\begin{lemma}
If $_RU_S$ is a 1-cotilting bimodule and $M$ is $\Delta$-torsionless, then $\Coker\eta^0_M$ belongs to $\Ker\Gamma$.
\end{lemma}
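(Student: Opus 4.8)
The plan is to turn $\eta^0_M$ into a short exact sequence and then reduce the vanishing of $\Gamma(\Coker\eta^0_M)$ to the split identity $\Delta(\delta_M)\circ\delta_{\Delta M}=1_{\Delta M}$ recorded in the preliminaries. First I would use that $M$ is $\Delta$-torsionless: by Corollary~\ref{cor:H0Delta} we then have $H^0\R\Delta^2(M)=\Delta^2 M$ and $\eta^0_M$ is identified with the evaluation map $\delta_M\colon M\to\Delta^2 M$, which is a monomorphism. Writing $C:=\Coker\eta^0_M$, this produces the short exact sequence
\[0\to M\xrightarrow{\delta_M}\Delta^2 M\to C\to 0.\]

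Next I would apply the contravariant functor $\Delta=\Hom(-,U)$ and read off the associated long exact sequence
\[0\to \Delta C\to \Delta^3 M\xrightarrow{\Delta(\delta_M)}\Delta M\to \Gamma C\to \Gamma(\Delta^2 M)\to\cdots\]
The key simplification is that $\Delta^2 M=\Delta(\Delta M)$ lies in $\Im\Delta\subseteq\Cogen U=\Ker\Gamma$, so that $\Gamma(\Delta^2 M)=0$. Consequently $\Gamma C$ is precisely the cokernel of the map $\Delta(\delta_M)\colon\Delta^3 M\to\Delta M$.

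Finally, the identity $\Delta(\delta_M)\circ\delta_{\Delta M}=1_{\Delta M}$ exhibits $\delta_{\Delta M}$ as a right inverse of $\Delta(\delta_M)$, so $\Delta(\delta_M)$ is a split epimorphism and therefore has vanishing cokernel. Hence $\Gamma C=0$, i.e. $\Coker\eta^0_M\in\Ker\Gamma$, as asserted. There is no genuine obstacle in this argument; the only two points requiring care are the legitimate identification of $\eta^0_M$ with $\delta_M$ for $\Delta$-torsionless $M$ (Corollary~\ref{cor:H0Delta}) and the observation that the splitting of the evaluation map forces surjectivity of $\Delta(\delta_M)$, which is exactly what kills $\Gamma C$.
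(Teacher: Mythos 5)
Your proposal is correct and is essentially the paper's own proof: both identify $\eta^0_M$ with $\delta_M$ for $\Delta$-torsionless $M$, apply $\Delta$ to the short exact sequence $0\to M\to\Delta^2 M\to\Coker\delta_M\to 0$, and conclude from the split identity $\Delta(\delta_M)\circ\delta_{\Delta M}=1_{\Delta M}$ that $\Delta(\delta_M)$ is epic, killing $\Gamma\Coker\delta_M$. If anything, you are slightly more explicit than the paper in spelling out why $\Gamma(\Delta^2 M)=0$ (namely $\Im\Delta\subseteq\Cogen U=\Ker\Gamma$), a step the paper leaves implicit.
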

\begin{proof}
By Proposition~\ref{prop:cottheorem} we have $\Coker\eta^0_M\cong\Coker \delta_M$. Applying $\Delta$ to the short exact sequence
\[0\to M\stackrel{\delta_M}{\to} \Delta^2 M\to \Coker \delta_M\to 0\]
we get $\Delta^3 M\stackrel{\Delta(\delta_M)}{\to}\Delta M\to \Gamma \Coker \delta_M\to 0$. Since $\Delta(\delta_M)$ is an epimorphism, we conclude $\Gamma \Coker \delta_M=0$.
\end{proof}

\begin{definition}\label{defi:denseDeltalc}
A module $M$ is 
\begin{itemize}
\item \textbf{$\Delta$-dense} in $\Delta^2 M$ ($\Delta$-dense for short) if for any $h\in\Delta^2 M$ and any finitely generated submodule $L$ of $\Delta M$, there exists $m_L\in M$ such that $h(\ell)=\delta_M(m_L)(\ell)$ for each $\ell\in L$ \cite[\S 47]{Wis91};
\item \textbf{$\Delta$-linearly compact}
if $M\in\Cogen U$ and $\varprojlim p_\lambda: M\to \varprojlim L_\lambda$ is an epimorphism for every inverse system of epimorphisms $p_\lambda:M\to L_\lambda$ with the $L_\lambda$'s in $\Cogen U$.
\end{itemize}
\end{definition}
\begin{remark}\label{rem:Deltadense}
A module $M$ is $\Delta$-dense if and only if for any finitely generated submodule $L$ of $\Delta M$, denoted by $\xymatrix{L\ar@{^(->}[r]^i& \Delta M}$ the inclusion, we have $\Im(\Delta i\circ\delta^0_M)=\Im\Delta(i)$.
\end{remark}
The main result about $\Delta$-torsionless modules is the following characterization, consequence of results in \cite{Baz03} and \cite{ManRuzTon03}:
\begin{theorem}[\cite{ColFul0610} Proposition~5.3.7]\label{teo:deltariflessivo}
If $_RU_S$ is a 1-cotilting bimodule, then a $\Delta$-torsionless module $M$ is $\D$-reflexive if and only if it is \emph{$\Delta$-dense} and \emph{$\Delta$-linearly compact}. 
\end{theorem}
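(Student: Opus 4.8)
The plan is to first reduce the statement to ordinary $\Delta$-reflexivity. Since $M$ is $\Delta$-torsionless, Corollary~\ref{cor:H0Delta} gives $H^0\R\Delta^2(M)=\Delta^2M$ and identifies $\D$-reflexivity with the requirement that the evaluation map $\delta_M\colon M\to\Delta^2M$ be an isomorphism; as $\delta_M$ is already a monomorphism (this is exactly $\Delta$-torsionlessness), the whole theorem amounts to characterizing when $\delta_M$ is \emph{surjective}. I would then prove the two implications separately, using the notions of Definition~\ref{defi:denseDeltalc}.

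For sufficiency I would run the classical gluing argument of M\"uller--Wisbauer. Fix $h\in\Delta^2M=\Hom(\Delta M,U)$. For every finitely generated submodule $L\leq\Delta M$, $\Delta$-density yields $m_L\in M$ with $h(\phi)=\phi(m_L)$ for all $\phi\in L$. Writing $K_L=\bigcap_{\phi\in L}\Ker\phi$, a choice of generators $\phi_1,\dots,\phi_n$ of $L$ embeds $M/K_L$ into $U^n$, so $M/K_L\in\Cogen U$, and the canonical projections $p_L\colon M\to M/K_L$ form an inverse system of epimorphisms indexed by the directed set of finitely generated submodules of $\Delta M$. The elements $p_L(m_L)$ are compatible: for $L\subseteq L'$ one has $\phi(m_{L'})=h(\phi)=\phi(m_L)$ for every $\phi\in L$, so $m_{L'}-m_L\in K_L$. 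Thus $(p_L(m_L))_L$ defines an element of $\varprojlim M/K_L$, and $\Delta$-linear compactness makes $\varprojlim p_L$ surjective, producing $m\in M$ with $m-m_L\in K_L$ for every $L$. Then $\phi(m)=h(\phi)$ for all $\phi$, i.e.\ $\delta_M(m)=h$, so $\delta_M$ is onto and $M$ is $\D$-reflexive.

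For necessity assume $\delta_M$ is an isomorphism. $\Delta$-density is immediate, since every $h\in\Delta^2M$ equals $\delta_M(m)$ for a global $m$. For $\Delta$-linear compactness, let $p_\lambda\colon M\to L_\lambda$ be an inverse system of epimorphisms with $L_\lambda\in\Cogen U$ and kernels $K_\lambda$. Because $\Cogen U=\Ker\Gamma$, applying $\Delta$ to $0\to K_\lambda\to M\to L_\lambda\to 0$ gives short exact sequences $0\to\Delta L_\lambda\to\Delta M\to\Delta K_\lambda\to 0$; these assemble into a direct system of submodules of $\Delta M$. Passing to the direct limit (exact) and setting $A=\varinjlim\Delta L_\lambda\subseteq\Delta M$, the cokernel is $C=\varinjlim\Delta K_\lambda$. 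Here the crucial input enters: since $U$ is pure injective \cite{Baz03}, $\Cogen U$ is closed under direct limits \cite[Proposition~1.3]{ManRuzTon03}, so $C\in\Cogen U$ and $\Gamma C=0$. Applying $\Delta$ to $0\to A\to\Delta M\to C\to 0$ therefore yields a surjection $\Delta^2M\twoheadrightarrow\Delta A=\varprojlim\Delta^2L_\lambda$. Identifying $\Delta^2M\cong M$ via $\delta_M$ and checking that this surjection factors as $(\varprojlim\delta_{L_\lambda})\circ(\varprojlim p_\lambda)$, while $\varprojlim\delta_{L_\lambda}$ is a monomorphism, one concludes that $\varprojlim\delta_{L_\lambda}$ is an isomorphism and $\varprojlim p_\lambda$ is surjective, as required.

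The main obstacle is precisely the necessity of $\Delta$-linear compactness. One is tempted to invoke Proposition~\ref{prop:lincomp}, but the quotients $L_\lambda$ lie only in $\Cogen U$ and need not be $\D$-reflexive, so that proposition does not apply to the given system. The resolution is to transport the problem to the double dual, where the relevant terms become images of $\Delta$; the two facts that make this work are the exactness coming from $\Gamma L_\lambda=0$ and the closure of $\Cogen U$ under direct limits, which forces $\Gamma C=0$. The remaining care is bookkeeping with the natural isomorphism $\Delta\varinjlim\cong\varprojlim\Delta$ and verifying that the induced map on inverse limits is the expected composite; the sufficiency direction, by contrast, is routine once $\Delta$-density is used to manufacture the coherent family.
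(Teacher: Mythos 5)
Your proof is correct, and it is worth noting that the paper itself contains no proof of Theorem~\ref{teo:deltariflessivo}: the statement is quoted from \cite[Proposition~5.3.7]{ColFul0610} as a consequence of results in \cite{Baz03} and \cite{ManRuzTon03}. Your reconstruction matches the argument behind that citation in all essentials: the reduction via Corollary~\ref{cor:H0Delta} to surjectivity of $\delta_M$ (legitimate, since a $\Delta$-torsionless $M$ lies in $\Cogen U=\Ker\Gamma$, so $\Gamma^2M=0=\Delta\Gamma M$ and $\D$-reflexivity collapses to $\Delta$-reflexivity), the M\"uller-style gluing of the locally interpolating elements $m_L$ along the inverse system $M\to M/K_L$ for sufficiency, and, for necessity, precisely the two external inputs the paper credits: pure-injectivity of $U$ \cite{Baz03}, whence $\Cogen U$ is closed under direct limits \cite[Proposition~1.3]{ManRuzTon03}, giving $\Gamma C=0$ for $C=\varinjlim\Delta K_\lambda$ and hence the surjection $\Delta^2M\twoheadrightarrow\Delta A=\varprojlim\Delta^2L_\lambda$, which transports along $\delta_M$ and factors through $\varprojlim p_\lambda$ by naturality. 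Your parenthetical diagnosis is also accurate: Proposition~\ref{prop:lincomp} cannot be invoked for the given system because the quotients $L_\lambda\in\Cogen U$ need not have $\eta^0_{L_\lambda}$ an isomorphism, and the passage to the double dual with $\varprojlim\delta_{L_\lambda}$ a monomorphism is exactly what circumvents this.
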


If $_RU$ and $U_S$ are cogenerators in the respective categories, all modules are clearly $\Delta$-torsionless and $\Delta$-dense by \cite[\S 47.6]{Wis91}, and the $\Delta$-linear compactness coincides with the usual linear compactness: thus we deduce the classical M\"uller characterization of \emph{Morita-reflexive objects} in the case $_RU_S$ is a Morita bimodule \cite{Mul70}.

Comparing with the classical Morita case, the $\Delta$-density condition appears. In \cite[47.7, (1)]{Wis91} is proved that if  for any $k\in\mathbb N$ and $f:M\to U^k$, the $\Coker f$ is $\Delta$-torsionless, then a module $M$ is $\Delta$-dense. In particular if $\gen {}_RU\subseteq\Cogen U$, all left $R$-modules are $\Delta$-dense.

We generalize this result obtaining a characterization of the modules  $M$ which are $\Delta$-dense. Let us start with the following

\begin{lemma}\label{lemma:tecnicodenso}
Let $L$ be a right $S$-module and $\xymatrix{L\ar[r]^f&\Delta M}$ a morphism. Then there exists $\alpha\in\Hom_R(M,\Delta L)$ such that $\Delta\alpha\circ \delta_L=f$.
\end{lemma}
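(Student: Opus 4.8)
The plan is to produce the required map $\alpha$ by a single naturality chase, exploiting that $\delta$ is the unit of the right adjoint pair $(\Delta_R,\Delta_S)$ together with the triangle identity $\Delta(\delta_M)\circ\delta_{\Delta M}=1_{\Delta M}$ already recorded in the preliminaries. Conceptually, $\alpha$ will be the image of $f$ under the adjunction bijection $\Hom_S(L,\Delta_R M)\cong\Hom_R(M,\Delta_S L)$, and the asserted identity is exactly one of the triangle identities of that adjunction; but it is just as quick to write $\alpha$ down and verify the equation by hand. Concretely, since $f\colon L\to\Delta M$ is a morphism of right $S$-modules, applying $\Delta=\Delta_S$ yields $\Delta f\colon\Delta^2 M\to\Delta L$ of left $R$-modules (here $\Delta_S(\Delta_R M)=\Delta^2 M$ and $\Delta_S L=\Delta L$). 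Composing with the evaluation map $\delta_M\colon M\to\Delta^2 M$, I would set
\[
\alpha:=\Delta f\circ\delta_M\colon M\longrightarrow\Delta L,
\]
which is the desired element of $\Hom_R(M,\Delta L)$.

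It then remains to verify $\Delta\alpha\circ\delta_L=f$. Applying the contravariant functor $\Delta=\Delta_R$ to $\alpha$ and using $\Delta_R\Delta_S f=\Delta^2 f$ gives
\[
\Delta\alpha=\Delta(\delta_M)\circ\Delta^2 f .
\]
Next I would invoke naturality of $\delta\colon\text{id}_{\rMod S}\to\Delta^2$ applied to the morphism $f\colon L\to\Delta M$, whose commuting square reads $\Delta^2 f\circ\delta_L=\delta_{\Delta M}\circ f$; substituting this yields
\[
\Delta\alpha\circ\delta_L=\Delta(\delta_M)\circ\delta_{\Delta M}\circ f .
\]
Finally the triangle identity $\Delta(\delta_M)\circ\delta_{\Delta M}=1_{\Delta M}$ collapses the first two factors, leaving $\Delta\alpha\circ\delta_L=f$, as required.

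The argument is entirely formal, so the only real obstacle is bookkeeping of variances: at each stage one must track which of $\Delta_R$, $\Delta_S$ is being applied and check that source and target land in the correct module category ($R\lMod$ versus $\rMod S$), so that the final composite $\Delta\alpha\circ\delta_L$ is indeed a morphism $L\to\Delta M$ comparable with $f$. As a sanity check one can unwind everything elementwise: the recipe above is precisely $\alpha(m)(\ell)=f(\ell)(m)$, and evaluating $(\Delta\alpha\circ\delta_L)(\ell)$ at $m\in M$ returns $\delta_L(\ell)(\alpha(m))=\alpha(m)(\ell)=f(\ell)(m)$, confirming the identity directly.
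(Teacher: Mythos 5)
Your proposal is correct and coincides with the paper's own proof: you define $\alpha:=\Delta f\circ\delta_M$ exactly as the paper does, and verify $\Delta\alpha\circ\delta_L=f$ via the same naturality square $\Delta^2 f\circ\delta_L=\delta_{\Delta M}\circ f$ and the same identity $\Delta(\delta_M)\circ\delta_{\Delta M}=1_{\Delta M}$ recorded in the preliminaries. The added adjunction interpretation and elementwise check $\alpha(m)(\ell)=f(\ell)(m)$ are harmless embellishments of the identical argument.
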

\begin{proof}
We have the commutative diagram
\[\xymatrix{L\ar[r]^f\ar[d]_{\delta_L}&\Delta M\ar[d]^{\delta_{\Delta M}}\\
\Delta^2 L\ar[r]^{\Delta^2 f}&\Delta^3 M}\]
Let us consider $\alpha:=\Delta f\circ\delta_M:M\to\Delta L$. Then
\[\Delta\alpha\circ\delta_L=\Delta(\delta_M)\circ\Delta^2 f\circ\delta_L=
\Delta(\delta_M)\circ \delta_{\Delta M}\circ f=f.\]
\end{proof}

\begin{theorem}\label{thm:densita}
Let $_RU_S$ be a 1-cotilting bimodule with $S=\End{}_RU$. A left $R$-module ${}_RM$ is $\Delta$-dense if and only if for any $k\in\mathbb N$ and $\alpha:M\to {}_RU^k$, the $\Coker \alpha$ is $\D$-torsionless.
\end{theorem}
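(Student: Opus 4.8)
The plan is to prove the two implications separately, exploiting the characterization of $\Delta$-density in Remark~\ref{rem:Deltadense}, namely that $M$ is $\Delta$-dense if and only if $\Im(\Delta i\circ\delta_{\Delta M})=\Im\Delta(i)$ for every finitely generated submodule $\xymatrix{L\ar@{^(->}[r]^i&\Delta M}$. The bridge between the density condition (phrased for finitely generated submodules $L$ of $\Delta M$) and the cokernel condition (phrased for maps $\alpha:M\to{}_RU^k$) is Lemma~\ref{lemma:tecnicodenso}: every morphism $f:L\to\Delta M$ factors as $\Delta\alpha\circ\delta_L$ for a suitable $\alpha:M\to\Delta L$. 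When $L$ is finitely generated there is an epimorphism $S^k\to L$, equivalently $L$ embeds into its bidual $\Delta^2 L$ which is a submodule of a power of ${}_RU$; so maps landing in finitely generated $L$ are governed by maps $\alpha:M\to{}_RU^k$ and their cokernels. This is the technical identification I expect to organize the whole argument.

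First I would prove the sufficiency direction: assume every such $\Coker\alpha$ is $\D$-torsionless and show $M$ is $\Delta$-dense. Fix a finitely generated submodule $L\hookrightarrow\Delta M$ via $i$. The idea is to produce $\alpha:M\to{}_RU^k$ whose cokernel carries the relevant $\Delta$-torsionless information, then feed this into Remark~\ref{rem:Deltadense}. Since $L$ is finitely generated it admits an epimorphism $S^k\twoheadrightarrow L$, so $L\leq{}_RU^k$ after applying $\Delta$ and using $S=\End{}_RU$; applying $\Delta$ to the composite $M\to\Delta L$ (via Lemma~\ref{lemma:tecnicodenso}) and to the inclusion of $L$, I would build a map $\alpha:M\to{}_RU^k$ and compare $\Im\Delta(i)$ with $\Im(\Delta i\circ\delta_{\Delta M})$. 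The hypothesis that $\Coker\alpha$ is $\D$-torsionless means $\eta^0_{\Coker\alpha}$ is monic (Proposition~\ref{prop:cottheorem}); applying $\Delta$ to $0\to\Im\alpha\to{}_RU^k\to\Coker\alpha\to0$ and tracking $\Delta\Gamma(\Coker\alpha)=0$ should force the two images to coincide, giving $\Delta$-density.

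For the necessity direction I would assume $M$ is $\Delta$-dense and show $\Coker\alpha$ is $\D$-torsionless for every $\alpha:M\to{}_RU^k$. Here the natural route is to apply $\Delta$ to $0\to\Im\alpha\to{}_RU^k\to\Coker\alpha\to0$, obtaining information about $\Delta\Gamma(\Coker\alpha)$ and $\eta^0_{\Coker\alpha}$. Since $\Delta({}_RU^k)=S^k\in\gen S_S$, Lemma~\ref{lemma:fingengenerale}(1) applies to control the relevant $\eta^0$ maps, and the long exact sequence attached via Proposition~\ref{prop:cottheorem} relates $\eta^0_{\Coker\alpha}$ to $\Delta\Gamma(\Coker\alpha)$; the density hypothesis, reinterpreted through Remark~\ref{rem:Deltadense} as the surjectivity statement $\Im(\Delta i\circ\delta_{\Delta M})=\Im\Delta(i)$ for the finitely generated submodule $L=\Im\Delta(\alpha)\leq\Delta M$, should be exactly what is needed to conclude $\eta^0_{\Coker\alpha}$ is monic.

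The main obstacle I anticipate is the careful bookkeeping in the sufficiency direction, where I must synthesize a single map $\alpha:M\to{}_RU^k$ out of an arbitrary finitely generated submodule $L\leq\Delta M$ and verify that its cokernel's $\D$-torsionlessness translates back into the density equality of images in Remark~\ref{rem:Deltadense}; matching the two finite-generation conventions (submodules of $\Delta M$ versus quotients of $M$ by images of maps into powers of ${}_RU$) through Lemma~\ref{lemma:tecnicodenso} and the identity $S=\End{}_RU$ is the delicate step, since it requires passing between $L$, its bidual, and the concrete power ${}_RU^k$ without losing track of the evaluation maps $\delta$.
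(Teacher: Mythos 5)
Your plan is essentially the paper's own proof: reformulate $\Delta$-density via maps $S^k\to\Delta M$ (Remark~\ref{rem:Deltadense}), use Lemma~\ref{lemma:tecnicodenso} together with $S=\End{}_RU$ (which makes $\delta_{S^k}$ and $\delta_{U^k}$ isomorphisms) to pass between finitely generated submodules $L\leq\Delta M$ and maps $\alpha:M\to{}_RU^k$, and conclude by the diagram chase identifying $\Im\Delta^2\alpha=\Im(\Delta^2\alpha\circ\delta_M)$ with the injectivity of $\eta^0_{\Coker\alpha}$ --- the paper just runs this as a single chain of equivalences rather than two separate implications. Two small corrections: the density condition in Remark~\ref{rem:Deltadense} involves $\delta_M:M\to\Delta^2 M$ (your $\delta_{\Delta M}$ is not even composable with $\Delta i$), and in the sufficiency direction you should not plan on ``tracking $\Delta\Gamma(\Coker\alpha)=0$'', since $\D$-torsionlessness does not yield that vanishing (and it need not hold); the chase needs only that $\eta^0_{\Coker\alpha}$ is monic and that $\delta_{U^k}$ is an isomorphism.
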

\begin{proof}
By Definition~\ref{defi:denseDeltalc}, a left $R$-module $M$ is $\Delta$-dense if and only if for all $k\in\mathbb N$ and all morphisms
\[\xymatrix{S^k\ar[r]^j&\Delta M}\quad \text{and}\quad\xymatrix{\Delta M\ar[r]^\phi&U}\]
there exists $m\in M$ such that $\phi\circ j=\delta_m\circ j$, or equivalently $\Delta j(\phi)=\Delta j(\delta_m)$, i.e., $\Im \Delta j=\Im(\Delta j\circ\delta_M)$. By Lemma~\ref{lemma:tecnicodenso} there exists $\alpha:M\to \Delta S^k=U^k$ such that $j=\Delta\alpha\circ \delta_{S^k}$. Then $\Im \Delta j=\Im(\Delta j\circ\delta_M)$ is equivalent to 
\[\Im \Delta(\delta_{S^k})\circ\Delta^2\alpha=
\Im \Delta(\delta_{S^k})\circ\Delta^2\alpha\circ\delta_M,
\]
which corresponds to $\Im \Delta^2\alpha=\Im \Delta^2\alpha\circ\delta_M$ being $\delta_{S^k}$ an isomorphism. We have the following commutative diagram
\[\xymatrix{M\ar[d]_{\delta_M}\ar[r]^\alpha&U^k\ar[d]_\cong^{\delta_{U^k}}\ar[r]&\Coker\alpha\ar[r]\ar[d]_{\eta^0_{\Coker\alpha}}&0\\
\Delta^2 M\ar[r]^{\Delta^2\alpha}&\Delta^2 U^k\ar[r]&H^0\R\Delta^2\Coker\alpha\ar[r]&0}\]
Then $\Im \Delta^2\alpha\circ\delta_M=\Im \Delta^2\alpha$ if and only if 
\[\Im \delta_{U^k}\circ \alpha=\delta_{U^k}(\Im\alpha)=\Im \Delta^2\alpha;\]
it is easy to verify by diagram chasing that this happens if and only if $\eta^0_{\Coker\alpha}$ is a monomorphism.
\end{proof}

\begin{corollary}
Let $_RU_S$ be a 1-cotilting bimodule with $S=\End{}_RU$. All left $R$-modules are $\Delta$-dense if and only if $\gen {}_RU\subseteq \D\Cogen U$.
\end{corollary}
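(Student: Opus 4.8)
The plan is to reduce everything to Theorem~\ref{thm:densita}, which characterizes $\Delta$-density of a single module in terms of cokernels of maps into finite powers of $U$. First I would restate the left-hand condition of the corollary: by Theorem~\ref{thm:densita}, asserting that \emph{all} left $R$-modules are $\Delta$-dense is equivalent to asserting that for every left $R$-module $M$, every $k\in\mathbb N$, and every morphism $\alpha\colon M\to {}_RU^k$, the cokernel $\Coker\alpha$ is $\D$-torsionless, i.e.\ lies in $\D\Cogen U$.

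The heart of the argument is then the identification of the class of all such cokernels with $\gen {}_RU$. On one hand, for any $\alpha\colon M\to {}_RU^k$ the module $\Coker\alpha$ is a quotient of ${}_RU^k$, hence belongs to $\gen {}_RU$. On the other hand, any $C\in\gen {}_RU$ admits an epimorphism $q\colon {}_RU^n\tra C$ for some $n\in\mathbb N$; taking $M:=\Ker q$ and $\alpha\colon M\hookrightarrow {}_RU^n$ the inclusion, we have $\Coker\alpha\cong C$. Thus, as $M$ ranges over all left $R$-modules, $k$ over $\mathbb N$, and $\alpha$ over all morphisms $M\to {}_RU^k$, the cokernels $\Coker\alpha$ range exactly over the objects of $\gen {}_RU$.

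Combining the two observations yields the stated equivalence: all left $R$-modules are $\Delta$-dense if and only if every cokernel $\Coker\alpha$ of a map into a power of ${}_RU$ is $\D$-torsionless, which by the identification above holds if and only if $\gen {}_RU\subseteq\D\Cogen U$. I do not expect any genuine obstacle here, as the corollary is essentially a repackaging of Theorem~\ref{thm:densita}; the only point deserving a moment of care is the surjectivity of the correspondence between cokernels and members of $\gen {}_RU$, which is handled by realizing each generated module as the cokernel of the inclusion of the kernel of a presenting epimorphism.
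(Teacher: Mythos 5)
Your proof is correct and takes essentially the same route as the paper, which states the corollary as an immediate consequence of Theorem~\ref{thm:densita}: you quantify that theorem over all modules and identify the class of cokernels of maps $M\to{}_RU^k$ with $\gen{}_RU$, realizing each $C\in\gen{}_RU$ as the cokernel of the inclusion $\Ker q\hookrightarrow{}_RU^n$ for a presenting epimorphism $q$. Nothing more is needed.
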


\begin{corollary}\label{coro:noetheriano}
Let $_RU_S$ be a faithfully balanced 1-cotilting bimodule. If $\gen U=\pres U$ or $U$ is noetherian, then all modules are $\Delta$-dense and those in $\gen U$ are $\D$-reflexive. In particular a $\Delta$-torsionless module is $\D$-reflexive if and only if it is \emph{$\Delta$-linearly compact}.
\end{corollary}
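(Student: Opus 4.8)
The plan is to reduce the whole statement to the single assertion that every module in $\gen U$ is $\D$-reflexive. Once this is proved, such modules are in particular $\D$-torsionless, so $\gen{}_RU\subseteq\D\Cogen U$, and the corollary immediately preceding this one yields that all modules are $\Delta$-dense; the final ``in particular'' then follows by feeding $\Delta$-density into Theorem~\ref{teo:deltariflessivo}, which turns its hypothesis into a triviality and leaves only $\Delta$-linear compactness. (The symmetric statements for right $S$-modules are obtained by exchanging the roles of $R$ and $S$.) So I fix $M\in\gen{}_RU$, choose an epimorphism $\pi\colon U^n\tra M$, set $K=\Ker\pi$, and work with the short exact sequence $0\to K\to U^n\xrightarrow{\pi}M\to 0$ and its associated triangle $K\to U^n\to M\to K[1]$ in $\D(R)$. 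I would deduce $\D$-reflexivity of $M$ by showing that both $K$ and $U^n$ are $\D$-reflexive and then transporting reflexivity along this triangle.

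First I record that $U^n$ is $\D$-reflexive. Because the bimodule is faithfully balanced, $S=\End({}_RU)$ gives $\Delta^2U=\Hom_S(S,U)=U$ with $\delta_U$ the identity, so $U$ is $\Delta$-reflexive; since moreover $U\in\Cogen U$ is $\Delta$-torsionless, Corollary~\ref{cor:H0Delta} upgrades this to $\D$-reflexivity. A finite direct sum of $\D$-reflexive modules is again $\D$-reflexive, hence so is $U^n$.

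The two hypotheses then govern $K$ (equivalently $M$) in the two cases. If $\gen U=\pres U$ there is nothing subtle to do: $M\in\pres{}_RU$, so $M$ is $\D$-reflexive directly by Lemma~\ref{lemma:fingengenerale}, part (1c). If instead $U$ is noetherian, then $U^n$ is noetherian, so its submodule $K$ is finitely generated; and $K\leq U^n\in\Cogen U$ shows $K$ is $\Delta$-torsionless. By the remark opening Subsection~\ref{sec:Drifltorsionless}, a finitely generated $\Delta$-torsionless module is $\D$-reflexive, so $K$ is $\D$-reflexive. Now I apply the triangulated functor $\R\Delta^2$ to the triangle $K\to U^n\to M\to K[1]$ and compare it with the identity through the natural transformation $\eta$: this is a morphism of triangles whose legs at $K$ and at $U^n$ are isomorphisms, so the five lemma for triangulated categories forces $\eta_M$ to be an isomorphism, i.e. $M$ is $\D$-reflexive. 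Concretely, in the long exact cohomology sequence of the diagram preceding Proposition~\ref{prop:cottheorem} the maps $\eta^\bullet_K$ and $\eta^\bullet_{U^n}$ are isomorphisms, which forces $\eta^0_M$ to be an isomorphism and $H^{-1}\R\Delta^2M=\Delta\Gamma M$ to vanish.

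The only genuine obstacle is the noetherian case, and the point to get right is that noetherianness should be used \emph{only} to make the relation module $K$ finitely generated, and not to try to force $K\in\gen U$: the latter can fail, so $\gen U=\pres U$ need not hold under mere noetherianness, and an attempt to mimic the $\pres U$ argument would break down. Once $K$ is seen to be finitely generated and $\Delta$-torsionless, its $\D$-reflexivity is free from the earlier remark, and the reflexivity of $M$ becomes a formal consequence of the triangulated five lemma together with the reflexivity of $U^n$ supplied by faithful balance. The only things I would still verify carefully are that $\R\Delta^2$ is genuinely a triangle functor (so that a morphism of triangles with two invertible legs has the third invertible) and that the identification $\Delta^2U^n\cong U^n$ is precisely the one induced by $\delta$; both are routine.
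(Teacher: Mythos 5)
Your proof is correct and takes essentially the same route as the paper: both arguments reduce everything to showing that modules in $\gen {}_RU$ are $\D$-reflexive via a presentation $0\to K\to U^n\to M\to 0$, using faithful balance for the reflexivity of $U^n$, the $\D$-reflexivity of the kernel (via $\pres U$, resp.\ via finite generation of $K\leq U^n$ when $U$ is noetherian, exactly as in the paper) and the long exact cohomology sequence of $\R\Delta^2$ to force $\Delta\Gamma M=0$ and $\eta^0_M$ an isomorphism, with the density and linear-compactness statements then following from Theorems~\ref{thm:densita} and \ref{teo:deltariflessivo} just as in the paper. Your triangulated five-lemma phrasing is merely a repackaging of the paper's explicit commutative-diagram chase, and your ``concrete'' fallback argument coincides with it.
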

\begin{proof}
Let us prove that $\gen U\subseteq \D\Cogen U$. Given $N\in\gen U$, consider the commutative diagram with exact rows
\[\xymatrix{&0\ar[r]&L\ar[d]_{\delta_L}\ar[r]& U^k\ar[d]^\cong_{\delta_{U^k}}\ar[r]& N\ar[d]_{\eta^0_N}\ar[r]&0\\
0\ar[r]&\Delta\Gamma N\ar[r]&\Delta^2 L\ar[r]& \Delta^2 U^k\ar[r]& H^0\R\Delta^2 N\ar[r]&0}\]
If $\gen U=\pres U$, we can assume $L\in\gen U$; if $U$ is noetherian, then $L$ is finitely generated. In both the cases $\delta_L$ is an isomorphism. Then $\Delta\Gamma N=0$ and $\eta^0_N$ is an isomorphism.
\end{proof}

We have the following closure properties for $\D$-reflexive $\Delta$-torsionless modules.
\begin{proposition}
Let $f:M\to N$ a morphism between $\Delta$-torsionless modules with $\Delta\Coker f=0$. If $M$ is $\Delta$-reflexive, then $\Ker f$ and $\Im f$ are $\Delta$-reflexive. Moreover $N$ is $\Delta$-reflexive if and only if $\Delta\Gamma^2\Coker f=0$; in such a case $\Coker f$ is $\D$-reflexive.
\end{proposition}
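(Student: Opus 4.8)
The plan is to unpack the morphism $f$ into the two short exact sequences
\[0\to\Ker f\to M\to\Im f\to 0\quad\text{and}\quad 0\to\Im f\to N\to\Coker f\to 0,\]
and to feed them through the contravariant functors $\Delta$ and $\Delta^2$, using naturality of $\delta$ and of $\eta^0$. Throughout I use that a module is $\Delta$-torsionless exactly when it lies in $\Cogen U=\Ker\Gamma$; hence $\Gamma M=\Gamma N=0$, and since $\Ker f\leq M$ and $\Im f\leq N$, both $\Ker f$ and $\Im f$ are $\Delta$-torsionless, so $\Gamma\Ker f=\Gamma\Im f=0$. Write $K=\Ker f$, $I=\Im f$, $C=\Coker f$, with $\Delta C=0$ by hypothesis. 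Recall also that a $\Delta$-torsionless module is $\D$-reflexive iff it is $\Delta$-reflexive (Corollary~\ref{cor:H0Delta}), so for $M,N,I,K$ the two notions coincide.

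For the first assertion I would apply $\Delta$ twice to $0\to K\to M\to I\to 0$. Using $\Gamma I=0$ in the first step and $\Gamma\Delta K=0$ (images of $\Delta$ lie in $\Cogen U$) in the second, one gets a short exact sequence $0\to\Delta^2 K\to\Delta^2 M\to\Delta^2 I\to 0$ which, by naturality of $\delta$, fits into a commutative ladder over the original sequence with vertical maps $\delta_K,\delta_M,\delta_I$. As $M$ is $\Delta$-reflexive, $\delta_M$ is an isomorphism, while $\delta_K,\delta_I$ are monomorphisms (torsionless). A single application of the snake lemma forces $\Coker\delta_K=\Coker\delta_I=0$, so $\delta_K,\delta_I$ are isomorphisms and $K=\Ker f$, $I=\Im f$ are $\Delta$-reflexive.

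For the ``moreover'' part I would start from $0\to I\to N\to C\to 0$. Applying $\Delta$ and using $\Delta C=\Gamma N=\Gamma I=0$ gives $0\to\Delta N\to\Delta I\to\Gamma C\to 0$; applying $\Delta$ again (and $\Gamma\Delta I=0$) yields the exact sequence $0\to\Delta\Gamma C\to\Delta^2 I\to\Delta^2 N\to\Gamma^2 C\to 0$. The key observation is that $\Delta\Gamma C=0$: writing $j\colon I\hookrightarrow N$, naturality gives $\Delta^2 j\circ\delta_I=\delta_N\circ j$, which is a monomorphism, and since $\delta_I$ is an isomorphism the map $\Delta^2 j$ is injective, so its kernel $\Delta\Gamma C$ vanishes. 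Thus the previous sequence is a genuine short exact sequence, and the commutative ladder (via the long exact cohomology sequence of $\R\Delta^2$) relating $0\to I\to N\to C\to 0$ to $0\to\Delta^2 I\to\Delta^2 N\to\Gamma^2 C\to 0$ has vertical maps $\delta_I$ (isomorphism), $\delta_N$ (monomorphism) and $\eta^0_C$. The snake lemma then shows $\eta^0_C$ is a monomorphism and, crucially, produces an isomorphism $\Coker\delta_N\cong\Coker\eta^0_C$.

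Finally I would read off the equivalence. Since $N$ is $\Delta$-torsionless, it is $\Delta$-reflexive iff $\Coker\delta_N=0$, i.e. iff $\Coker\eta^0_C=0$. Moreover $\Coker\delta_N=\Coker\eta^0_N$ lies in $\Ker\Gamma$ (the cokernel of $\eta^0$ of a $\Delta$-torsionless module is $\Delta$-torsionless), so $\Coker\eta^0_C$ is $\Delta$-torsionless. Now if $\Delta\Gamma^2 C=0$, Lemma~\ref{prop:key1}\,(6) makes $\Coker\eta^0_C$ also $\Delta$-torsion, hence zero; so $N$ is $\Delta$-reflexive, $\eta^0_C$ is an isomorphism, and since $\Delta\Gamma C=0$ Proposition~\ref{prop:cottheorem} gives that $C=\Coker f$ is $\D$-reflexive. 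Conversely, if $N$ is $\Delta$-reflexive then $\Coker\eta^0_C=0$, so $\eta^0_C\colon C\to\Gamma^2 C$ is an isomorphism, whence $\Delta\Gamma^2 C\cong\Delta C=0$. The main obstacle is exactly this middle stretch: verifying $\Delta\Gamma C=0$ and translating reflexivity of $N$ into a statement about $\Coker\eta^0_C$; once $\Coker\eta^0_C$ is forced to be simultaneously $\Delta$-torsionless and (under $\Delta\Gamma^2 C=0$) $\Delta$-torsion, the torsion-pair dichotomy closes the argument.
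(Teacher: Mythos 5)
Your proof is correct, and while it opens exactly as the paper does, its second half takes a genuinely different route. The first assertion (the $\delta_K,\delta_M,\delta_I$ ladder over $0\to \Ker f\to M\to \Im f\to 0$, snake lemma) and the derivation of $\Delta\Gamma\Coker f=0$ from $\delta_{\Im f}$ being an isomorphism coincide with the paper's argument, modulo phrasing. For the ``moreover'' part, the paper passes to the dual side: it studies $g=\Delta f\colon\Delta N\to\Delta M$, produces the extension $0\to\Gamma C\to\Coker g\to\Delta K\to 0$, shows via two further ladders that $\eta^0_{\Coker g}$ is an isomorphism and that $\delta_{\Delta N}$ is an isomorphism if and only if $\Delta\Gamma\Coker g\cong\Delta\Gamma^2 C$ vanishes, and finally uses that $\delta_{\Delta N}$ is an isomorphism iff $\delta_N$ is. You instead stay on the primal side: the $\R\Delta^2$-cohomology ladder over $0\to \Im f\to N\to C\to 0$ (with identifications from Corollary~\ref{cor:H0Delta}) plus the snake lemma gives both that $\eta^0_C$ is monic and the key isomorphism $\Coker\delta_N\cong\Coker\eta^0_C$; then the Lemma of Section~4.1 (the cokernel of $\eta^0$ of a $\Delta$-torsionless module lies in $\Ker\Gamma$) makes $\Coker\eta^0_C$ torsionless, Lemma~\ref{prop:key1}(6) makes it $\Delta$-torsion when $\Delta\Gamma^2 C=0$, and condition (3) of Definition~\ref{definizione:1cotilting} kills it. Your route is shorter and buys two things: it avoids the auxiliary module $\Coker\Delta f$ and the (not entirely trivial) equivalence between reflexivity of $\Delta N$ and of $N$, and it makes the final claim transparent, since $\eta^0_C$ is monic throughout and becomes an isomorphism exactly when $N$ is reflexive, so the $\D$-reflexivity of $C$ (via $\Delta\Gamma C=0$ and Proposition~\ref{prop:cottheorem}) and the converse implication $\Delta\Gamma^2 C\cong\Delta C=0$ both drop out immediately; what the paper's dual-side computation buys in exchange is the extra structural information about $\Coker\Delta f$ and the isomorphism $\eta^0_{\Coker\Delta f}$, which your argument never needs.
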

\begin{proof}
Let $K:=\Ker f$ and $C:=\Coker f$. Consider the exact sequence
\[\xymatrix{0\ar[r]& K\ar[r]& M\ar[r]^f\ar@{->>}[rd]& N\ar[r]& C\ar[r]&0\\ 
&&&I=\Im f\ar@{_(->}[u]}\]
By the diagram
\[\xymatrix{0\ar[r]& K\ar@{^(->}[d]^{\eta^0_K=\delta_K}\ar[r]& M\ar[d]^{\eta^0_M=\delta_M}_\cong\ar[r]& I\ar@{^(->}[d]^{\eta^0_I=\delta_I}\ar[r]&0\\ 
0\ar[r]& \Delta^2 K\ar[r]& \Delta^2 M\ar[r]& \Delta^2 I\ar[r]&0}\]
it follows easily that both $\delta_I$ and $\delta_K$ are isomorphisms.
Moreover, by the diagram
\[(*)\quad\xymatrix{&0\ar[r]& I\ar[d]_\cong\ar[r]& N\ar[d]\ar[r]& C\ar[d]\ar[r]&0\\ 
0\ar[r]&\Delta\Gamma C\ar[r]& \Delta^2 I\ar[r]& \Delta^2 N\ar[r]& \Gamma^2 C\ar[r]&0}\]
it follows that $\Delta\Gamma C=0$.
Applying the snake lemma to the diagram
\[\xymatrix{0\ar[r]&\Delta N\ar@{=}[r]\ar[d]&\Delta N\ar[r]\ar[d]_g&0\ar[d]\\
0\ar[r]&\Delta I \ar[r]&\Delta M\ar[r]&\Delta K\ar[r]&0}
\]
we get the exact sequence $0\to \Gamma C\to\Coker g\to \Delta K\to 0$.
We have the diagram
\[\xymatrix@-1pc{&&&0\ar[r]& \Gamma C\ar@{^(->}[d]^{\eta^0_{\Gamma C}}\ar[r]& \Coker g\ar[d]\ar[r]& \Delta K\ar[d]_\cong^{\eta^0_{\Delta K}=\delta_{\Delta K}}\ar[r]&0\\ 
0\ar[r]&\Delta\Gamma^2 C\ar[r]&\Delta\Gamma \Coker g\ar[r]&0\ar[r]& \Gamma^3 C\ar[r]& H^0\R\Delta^2\Coker g\ar[r]& \Delta^3 K\ar[r]&0}\]
where $\eta^0_{\Gamma C}$ is a monomorphism by Proposition~\ref{prop:Dtorsionless}, and $\delta_{\Delta K}$ is an isomorphism since $\Delta(\delta_K)\circ\delta_{\Delta K}=1_{\Delta K}$ and we have seen that $\delta_K$ is an isomorphism. Therefore $\eta^0_{\Coker g}$ is a monomorphism and $\Delta\Gamma^2 C\cong\Delta\Gamma\Coker g$.
By the diagram
\[\xymatrix{&0\ar[r]& \Delta N\ar@{^(->}[d]_{\delta_{\Delta N}}\ar[r]^g& \Delta M\ar[d]_\cong\ar[r]& \Coker g\ar@{^(->}[d]_{\eta^0_{\Coker g}}\ar[r]&0\\ 
0\ar[r]&\Delta\Gamma\Coker g\ar[r]& \Delta^3 N\ar[r]& \Delta^3 M\ar[r]& H^0\R\Delta^2\Coker g\ar[r]&0}
\]
we get that $\eta^0_{\Coker g}$ is an isomorphism and hence $\delta_{\Delta N}$ is an isomorphism if and only if $\Delta\Gamma\Coker g=0$. Then we get the thesis since $\delta_{\Delta N}$ is an isomorphism if and only if $\delta_N$ is an isomorphism, and $\Delta\Gamma\Coker g\cong \Delta\Gamma^2 C$.
\end{proof}

\subsection{$\D$-reflexive $\Delta$-torsion modules}\label{sec:Drifltorsion}

Given a $\Delta$-torsion module $N$, the first condition for the $\D$-reflexivity (see Proposition~\ref{prop:cottheorem}), i.e., $\Delta\Gamma N=0$, is not anymore automatically satisfied. 
It is not easy to understand in general when $\Delta\Gamma=0$. Here are some partial results:
\begin{lemma}\label{lemma:Bon}
Let $_RU_S$ be a $1$-cotilting bimodule and $M$ a left $R$-module. 
\begin{enumerate}
\item If $R=\End(U_S)$ and $M$ is finitely related, then
$\Delta\Gamma M=0$;
\item if $S=\End({}_RU)$ and the right $S$-module $\Gamma M$ is finitely generated, then $\Delta\Gamma M=0$;
\item if the torsion pair $(\Ker\Delta,\Ker\Gamma)$ is hereditary then $\Delta\Gamma=0$ on the whole module category.
\end{enumerate}
\end{lemma}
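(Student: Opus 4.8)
The plan is to reduce all three parts to the single assertion that $\Gamma M$ is $\Delta$-torsion. Indeed $\Delta\Gamma M=\Hom(\Gamma M,U)$, and since $U\in\Cogen U$ cogenerates the torsion-free class $\Cogen U=\Ker\Gamma$, a module is killed by $\Hom(-,U)$ exactly when its torsion-free part vanishes; thus $\Delta\Gamma M=0$ if and only if $\Gamma M\in\Ker\Delta$. The common computational device is a projective presentation $0\to K\xrightarrow{i}P\to M\to0$ with $P$ projective: since $\Gamma P=0$ and $\Gamma\Delta=0$, applying $\Delta$ to $0\to\Im\Delta i\to\Delta K\to\Gamma M\to0$ and using that $\Delta(\Im\Delta i)\hookrightarrow\Delta^2P$ identifies $\Delta\Gamma M$ with $\Ker\bigl(\Delta^2 i\colon\Delta^2 K\to\Delta^2 P\bigr)$.

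For (1) I take the presentation with $K$ finitely generated, which exists because $M$ is finitely related. The naturality square of the evaluation map reads $\Delta^2 i\circ\delta_K=\delta_P\circ i$. As $P$ is projective it lies in ${}^\perp U=\Cogen U$, so $\delta_P$ is monic, and $i$ is monic; hence $\delta_P\circ i$, and therefore $\Delta^2 i\circ\delta_K$, is monic. On the other hand, because $R=\End(U_S)$ the left–right symmetric version of Lemma~\ref{lemma:fingengenerale}(1) applies to the finitely generated left module $K$ and makes $\eta^0_K$ an epimorphism; since $\beta_K$ is always epic and $\delta_K=\beta_K\circ\eta^0_K$ by Proposition~\ref{prop:cottheorem}, the map $\delta_K$ is epic. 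A monic composite whose right factor is epic forces its left factor $\Delta^2 i$ to be monic, whence $\Delta\Gamma M=\Ker\Delta^2 i=0$.

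For (2) I would first reduce to the torsion case: from $0\to\Rej_U M\to M\to M/\Rej_U M\to0$ and $M/\Rej_U M\in\Cogen U=\Ker\Gamma$ one obtains $\Gamma M\cong\Gamma N$ with $N:=\Rej_U M$ being $\Delta$-torsion. Now $\Gamma N$ is finitely generated and, by Proposition~\ref{prop:Dtorsionless}, $\D$-torsionless, so Lemma~\ref{lemma:fingengenerale}(1) makes $\eta^0_{\Gamma N}$ an isomorphism; the remaining task is the vanishing of the torsion-free part $\Delta^2\Gamma N$ of $\Gamma N$. For (3) I would instead exploit that hereditariness means $\Cogen U=\Ker\Gamma$ is closed under injective envelopes, so that $(\Ker\Delta,\Ker\Gamma)$ is a Giraud localizing pair; because a $1$-cotilting module is injective on the class it cogenerates, the image of $U$ is an injective object of the quotient category $R\lMod/\Ker\Delta$, $\Ext^1$ into $U$ vanishes after localization, and this says exactly that the torsion-free part of $\Ext^1_R(M,U)$ is zero, i.e. $\Delta\Gamma M=0$.

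The genuinely routine parts are the two diagram chases of (1). The main obstacle is (2) (and, in another guise, (3)): the finiteness hypothesis controls the \emph{reflexivity} datum $\eta^0_{\Gamma M}$ but \emph{not} the vanishing $\Delta\Gamma M$ — this is precisely the asymmetry of $\Hom(\Ext^1(-,U),U)$ around which the paper is organized — so one cannot conclude from $\eta^0_{\Gamma M}$ being an isomorphism alone. The delicate step is therefore to convert ``finitely generated'' into the disappearance of the torsion-free summand $\Delta^2\Gamma N$ by feeding the finiteness into the self-dual description of Lemma~\ref{prop:key1}, which exhibits $\Gamma N$ as a direct summand of $H^1\R\Delta^3 N$ fitting in $0\to\Gamma^3 N\to H^1\R\Delta^3 N\to\Delta^2\Gamma N\to0$, and then using the splitting $H^1\R\Delta(\eta_N)\circ\eta^0_{\Gamma N}=1_{\Gamma N}$ together with $\eta^0_{\Gamma N}$ being iso to force $\Gamma^3 N\to\Gamma N$ to be onto, i.e. $\Delta^2\Gamma N=0$; I expect this reconciliation of the two different identifications of $\Gamma N$ to be where the real work lies.
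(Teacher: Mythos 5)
Your part (1) is correct and is essentially the paper's own proof: the paper takes $0\to K\to P\to M\to 0$ with $K$ finitely generated, observes that $K$ is $\Delta$-torsionless (being a submodule of $P\in{}^\perp U=\Cogen U$, exactly your justification) and hence $\D$-reflexive by the mirrored Lemma~\ref{lemma:fingengenerale}(1a), and reads off $\Delta\Gamma M=\Ker(\Delta^2 K\to\Delta^2 P)=0$ from the commutative ladder; your epi-plus-mono chase with $\delta_K$ and $\delta_P$ is the same computation.

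Part (2), however, has a genuine gap, and you located it yourself without closing it. The paper's key idea, absent from your proposal, is the \emph{dual Bongartz lemma} (\cite[Proposition~3.3.9]{GobTrl06}): if $\Gamma M$ is generated by $n$ elements over $S=\End({}_RU)$, the universal extension yields $0\to U^n\to C\to M\to 0$ with $C\in\Ker\Gamma$; since $\delta_{U^n}$ is an isomorphism (here $S=\End({}_RU)$ is used) and $\delta_C$ is monic (as $C\in\Ker\Gamma=\Cogen U$), the same ladder as in (1), with $(K,P)$ replaced by $(U^n,C)$, gives $\Delta\Gamma M=0$ at once. Your substitute route cannot be completed. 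From $\Gamma N$ finitely generated and $\D$-torsionless you correctly obtain that $\eta^0_{\Gamma N}$ is an isomorphism, but an isomorphism $\eta^0_X$ never implies $\Delta X=0$ (take $X=S$: $\eta^0_S=\delta_S$ is an isomorphism while $\Delta S=U\neq 0$), so the reflexivity datum alone is structurally insufficient — precisely the asymmetry you yourself noted. And the tools you propose to feed it into are circular: in Lemma~\ref{prop:key1} the identification $H^1\R\Delta(\eta_N)=\Gamma(\eta^0_N)$ and item (4) are proved \emph{under the hypothesis} $\Delta\Gamma N=0$, which is the statement to be established. Without it, $\eta^0_{\Gamma N}$ being an isomorphism only makes $H^1\R\Delta(\eta_N)$ its inverse, and the sequence $0\to\Gamma^3 N\to H^1\R\Delta^3 N\to\Delta^2\Gamma N\to 0$ then says that the composite $\Gamma^3 N\to\Gamma N$ is monic with cokernel isomorphic to $\Delta^2\Gamma N$: its surjectivity is \emph{equivalent to}, not a consequence of, the vanishing you want. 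Finally, for (3) the paper simply cites \cite[Theorem~1.7]{Man01}; your Giraud-localization sketch leaves unproved both of its pivotal steps — that the image of $U$ is injective in the quotient category (doubtful as stated, since $U$ need not be closed for the associated Gabriel topology: $\Gamma$ does not vanish on $\Delta$-torsion modules, which is the whole point of the paper) and that $\Ext^1$-vanishing in the quotient category computes the torsion-free quotient of $\Ext^1_R(M,U)$ — so (3) too remains a plausible sketch rather than a proof.
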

\begin{proof}
1. We have an exact sequence $0\to K\to P\to M\to 0$ with $P$ projective and $K$ finitely generated. Since $K$ is $\D$-reflexive by Lemma~\ref{lemma:fingengenerale}, 1a), we have the following diagram with exact rows
\[\xymatrix{&0\ar[r]&K\ar[d]_{\delta_{K}}^\cong\ar[r]&P\ar@{^(->}[d]_{\delta_{P}}\ar[r]&M\ar[d]\ar[r]&0\\
0=\Delta\Gamma P\ar[r]& \Delta\Gamma M\ar[r]&\Delta^2 K\ar[r]&\Delta^2 P\ar[r]&H^0\R\Delta^2 M\ar[r]&0}\]
from which easily one gets $\Delta\Gamma M=0$.\\
2. By the dual version of Bongartz Lemma (see \cite[Proposition~3.3.9]{GobTrl06}), denoted by $n$ the cardinality of a finite system of generators for the right $S$-module $\Gamma M$, there exists a short exact sequence
\[0\to U^n\to C\to M\to 0\]
for a suitable module $C$ in $\Ker\Gamma$.
Since $U^n$ is $\Delta$-reflexive, we have the following diagram with exact rows
\[\xymatrix{&0\ar[r]&U^n\ar[d]_{\delta_{U^n}}^\cong\ar[r]&C\ar@{^(->}[d]_{\delta_{C}}\ar[r]&M\ar[d]\ar[r]&0\\
0=\Delta\Gamma C\ar[r]& \Delta\Gamma M\ar[r]&\Delta^2 U^n\ar[r]&\Delta^2 C\ar[r]&H^0\R\Delta^2 M\ar[r]&0}\]
from which easily one gets $\Delta\Gamma M=0$.\\
3. See \cite[Theorem~1.7]{Man01}.
\end{proof}

\begin{definition}
Let $_RU_S$ be a $1$-cotilting bimodule. A module $N$ is 
\begin{itemize}
\item \textbf{$\ell$-orthogonal}, $\ell\geq 1$, if 
\[\Delta\Gamma N=0, ..., \Delta\Gamma^{\ell} N=0;\]
\item \textbf{$\infty$-orthogonal} if $N$ is $\ell$-orthogonal for each $\ell\in\mathbb N$.
\end{itemize}
\end{definition}

Any $\Delta$-torsionless module is clearly $\infty$-orthogonal. A $\D$-reflexive $\Delta$-torsion module is by definition $1$-orthogonal, but it is necessarily $\infty$-orthogonal:
\begin{proposition}\label{lemma:Grifl=rifl}
Let $_RU_S$ be a $1$-cotilting bimodule and $N$ a $\Delta$-torsion module. If $N$ is $\D$-reflexive, then $\Gamma^i N$ is $\Delta$-torsion and $\D$-reflexive for each $i\geq 1$.
\end{proposition}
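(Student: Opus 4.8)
The plan is to prove the statement by induction on $i$, reducing everything to the single step from $N$ to $\Gamma N$. The guiding idea is that for a $\Delta$-torsion module the functor $\Gamma$ is nothing but a shift of the total derived functor $\R\Delta$, so the excellent behaviour of $\R\Delta$ on $\D$-reflexive objects can be imported to $\Gamma$ with essentially no extra work; this is what makes the argument clean and avoids the more delicate bookkeeping with the exact sequences of Lemma~\ref{prop:key1}.

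First I would establish the base case $i=1$, i.e.\ that $\Gamma N$ is $\Delta$-torsion and $\D$-reflexive. The torsion assertion is immediate: since $N$ is $\Delta$-torsion and $\D$-reflexive, Proposition~\ref{prop:cottheorem} gives $\Delta\Gamma N=0$, and $\Delta(\Gamma N)=\Delta\Gamma N=0$ says precisely that $\Gamma N$ is $\Delta$-torsion. For $\D$-reflexivity I would use the observation recalled in the preliminaries that $\R\Delta$ sends $\D$-reflexive complexes to $\D$-reflexive complexes (a consequence of $\R\Delta(\eta_N)\circ\eta_{\R\Delta N}=1_{\R\Delta N}$): thus $\R\Delta N$ is $\D$-reflexive. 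By Lemma~\ref{prop:key1}(1) we have $\R\Delta N=\Gamma N[-1]$, a complex whose only nonzero cohomology is $\Gamma N$. Applying Theorem~\ref{two:coomologie}, the $\D$-reflexivity of the complex $\Gamma N[-1]$ is equivalent to the $\D$-reflexivity of its single cohomology module $\Gamma N$. Hence $\Gamma N$ is both $\Delta$-torsion and $\D$-reflexive.

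For the inductive step I would simply replace $N$ by $\Gamma^i N$: by the inductive hypothesis $\Gamma^i N$ is $\Delta$-torsion and $\D$-reflexive, so the two arguments above apply verbatim. They show that $\Gamma^{i+1}N=\Gamma(\Gamma^i N)$ is $\Delta$-torsion, because $\Delta\Gamma(\Gamma^i N)=\Delta\Gamma^{i+1}N=0$ by Proposition~\ref{prop:cottheorem}, and that it is $\D$-reflexive, because $\R\Delta(\Gamma^i N)=\Gamma^{i+1}N[-1]$ is $\D$-reflexive and its only cohomology is $\Gamma^{i+1}N$. This completes the induction and yields the claim for every $i\geq1$.

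I do not anticipate a genuine obstacle here; the only point needing care is the transfer of $\D$-reflexivity from the shifted complex $\Gamma N[-1]$ to the module $\Gamma N$, and this is exactly what Theorem~\ref{two:coomologie} supplies. By contrast, a direct approach verifying the two conditions of Proposition~\ref{prop:cottheorem} for $\Gamma N$ would require proving both $\Delta\Gamma^2 N=0$ and that $\eta^0_{\Gamma N}$ is an isomorphism; Lemma~\ref{prop:key1}(4) only gives that $\eta^0_{\Gamma N}$ is a split monomorphism, so extracting the full isomorphism and the vanishing from the exact sequences would be considerably more laborious than the derived-categorical shortcut above.
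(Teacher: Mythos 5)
Your proof is correct, but it takes a genuinely different route from the paper for the key reflexivity step. The paper stays at the module level: from Proposition~\ref{prop:cottheorem} and Corollary~\ref{cor:H0Delta} it gets $\Gamma^2 N=H^0\R\Delta^2 N\cong N$ via $\eta^0_N$, whence $\Delta\Gamma^2 N=\Delta N=0$, and then upgrades $\eta^0_{\Gamma N}$ to an isomorphism directly from Lemma~\ref{prop:key1}(4) --- since $\Gamma(\eta^0_N)\circ\eta^0_{\Gamma N}=1_{\Gamma N}$ and $\eta^0_N$ (hence $\Gamma(\eta^0_N)$) is an isomorphism, $\eta^0_{\Gamma N}=\Gamma(\eta^0_N)^{-1}$ is one too. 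You instead lift everything to the derived category: $\R\Delta$ preserves $\D$-reflexivity (the unit--counit identity from the preliminaries), $\R\Delta N=\Gamma N[-1]$ by Lemma~\ref{prop:key1}(1), and Theorem~\ref{two:coomologie} transfers reflexivity from the shifted stalk complex to its single cohomology module $\Gamma N$. Both arguments are valid, and your reduction of $i\geq 1$ to $i=1$ by induction matches the paper's opening line. What the two approaches buy: yours is conceptually cleaner and makes the role of $\R\Delta$ transparent, but it leans on Theorem~\ref{two:coomologie}, the heavy imported result from \cite{ManTon10}, as a black box; the paper's route is self-contained (given Lemma~\ref{prop:key1}) and produces as by-products the explicit identifications $\Gamma^2 N\cong N$ and $\Delta\Gamma^2 N=0$, which are exactly the facts reused later (e.g.\ in Theorems~\ref{teo:fond1} and \ref{teo:fondam2}). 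One small correction to your closing remark: the direct approach is not ``considerably more laborious'' --- because $N$ is $\D$-reflexive, $\eta^0_N$ is an isomorphism, so the split monomorphism of Lemma~\ref{prop:key1}(4) is forced to be an isomorphism in one line, and the vanishing $\Delta\Gamma^2 N=0$ is immediate from $\Gamma^2 N\cong N$; this one-paragraph verification is precisely the paper's proof.
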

\begin{proof}
Since $\Gamma^i N=\Gamma(\Gamma^{i-1} N)$, it is sufficient to prove the claim for $i=1$. By Proposition~\ref{prop:cottheorem} we have $\Delta\Gamma N=0$ and $\eta^0_N:N\to H^0\R\Delta^2 N$ is an isomorphism. By Corollary~\ref{cor:H0Delta}, $H^0\R\Delta^2 N=\Gamma^2 N$ and therefore $\Delta\Gamma(\Gamma N)=\Delta\Gamma^2 N=\Delta N=0$.
Since $\eta^0_N$ is an isomorphism, by Lemma~\ref{prop:key1}, (4),  also $\eta^0_{\Gamma N}$ is an isomorphism. 
By Proposition~\ref{prop:cottheorem} we conclude that $\Gamma N$ is $\D$-reflexive.
\end{proof}

If $S=\End({}_RU)$ and it is noetherian, by Lemma~\ref{lemma:fingengenerale}, (1c), and Proposition~\ref{lemma:Grifl=rifl} all finitely generated right $S$-modules are $\infty$-orthogonal.

If $_RU_S$ is a $1$-cotilting bimodule with $R=\End (U_S)$, a $\Delta$-torsionless left $R$-module is the direct limit of its finitely generated submodules and these are $\D$-reflexive by Proposition~\ref{lemma:fingengenerale}, (1a). Something similar happens to the $\Delta$-torsion modules:
\begin{lemma}\label{lemma:rejectsfg}
Let $_RU_S$ be a $1$-cotilting bimodule. Any $\Delta$-torsion module $N$ is the direct limit of the rejects of its finitely generated submodules. If $R=\End (U_S)$ and
$N$ is a $\D$-torsionless $1$-orthogonal left $R$-module, then the rejects of its finitely generated submodules are $\D$-reflexive.
\end{lemma}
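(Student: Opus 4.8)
The plan is to prove the two assertions in turn.

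\emph{First assertion.} I would set $T:=\bigcup_{N'}\Rej_U N'$, the union ranging over the finitely generated submodules $N'$ of $N$, directed by inclusion. Since $N'\subseteq N''$ forces $\Rej_U N'\subseteq\Rej_U N''$ (each $f\colon N''\to U$ restricts to $N'$), this is a directed union, hence a submodule, and equals $\varinjlim\Rej_U N'$ along the inclusions. To finish it then suffices to show $T=N$, which I would do by proving that $N/T$ is both $\Delta$-torsion and $\Delta$-torsionless, so that $N/T=0$ because $\Ker\Delta\cap\Cogen U=0$ for the torsion pair $(\Ker\Delta,\Cogen U)$. That $N/T\in\Ker\Delta$ is immediate, being a quotient of the $\Delta$-torsion module $N$.

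The substance is to show $N/T\in\Cogen U$. Writing $N=\varinjlim N'$ over its finitely generated submodules gives $N/T=\varinjlim N'/(N'\cap T)$; since $\Cogen U$ is closed under direct limits ($U$ being pure injective), it is enough to check $N'/(N'\cap T)\in\Cogen U$ for each fixed $N'$. Here $N'\cap T=\varinjlim_{N''\supseteq N'}(N'\cap\Rej_U N'')$, and for each $N''\supseteq N'$ the family $\{g|_{N'}:g\in\Delta N''\}$ has common kernel exactly $N'\cap\Rej_U N''$; hence $N'/(N'\cap\Rej_U N'')$ embeds into the product $\prod_{g\in\Delta N''}U$ and so lies in $\Cogen U$. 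Applying the closure of $\Cogen U$ under direct limits a second time yields $N'/(N'\cap T)\in\Cogen U$, as required.

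\emph{Second assertion.} Fix a finitely generated submodule $N'\le N$ and put $T':=\Rej_U N'$, a $\Delta$-torsion module whose quotient $N'/T'$ is finitely generated and $\Delta$-torsionless. I would first show $\eta^0_{N'}$ is an isomorphism: being finitely generated with $R=\End(U_S)$, the module $N'$ lies in $\gen{}_RR$, so $\eta^0_{N'}$ is epic by Lemma~\ref{lemma:fingengenerale}(1) read with the roles of $R$ and $S$ interchanged; and $N'$ is a submodule of the $\D$-torsionless module $N$, hence $\D$-torsionless, so $\eta^0_{N'}$ is also monic. Likewise $N'/T'$, being finitely generated and $\Delta$-torsionless, is $\D$-reflexive, so $\eta^0_{N'/T'}$ is an isomorphism. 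Proposition~\ref{prop:iffrej} then transfers these isomorphisms to the reject, giving that $\eta^0_{T'}$ is an isomorphism. For the remaining reflexivity condition $\Delta\Gamma T'=0$ of Proposition~\ref{prop:cottheorem}, I would use the $1$-orthogonality of $N$: from $0\to T'\to N\to N/T'\to 0$ and $\injdim U\le 1$ the sequence $\Gamma N\to\Gamma T'\to 0$ is exact, i.e. $\Gamma N\twoheadrightarrow\Gamma T'$, and applying $\Delta=\Hom(-,U)$ turns this epimorphism into a monomorphism $\Delta\Gamma T'\hookrightarrow\Delta\Gamma N=0$. Thus $\Delta\Gamma T'=0$ and $\eta^0_{T'}$ is an isomorphism, so $T'$ is $\D$-reflexive by Proposition~\ref{prop:cottheorem}.

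I expect the main obstacle to be the first assertion: the tidy identity $N/T=\varinjlim N'/(N'\cap T)$ conceals the real point, that each image is cogenerated by $U$, and establishing this needs the double direct-limit description of $N'\cap T$ together with the twice-used closure of $\Cogen U$ under direct limits. In the second assertion the only delicate step is feeding the finiteness hypothesis into the correct one-sided form of Lemma~\ref{lemma:fingengenerale} (valid precisely because $R=\End(U_S)$); once $\eta^0_{N'}$ is known to be an isomorphism, both the transfer to $T'$ via Proposition~\ref{prop:iffrej} and the vanishing $\Delta\Gamma T'=0$ are essentially formal.
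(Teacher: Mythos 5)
Your proof is correct and takes essentially the same route as the paper's: both hinge on presenting $N/\varinjlim\Rej_U L_\lambda$ as a direct limit of $\Delta$-torsionless modules, so that closure of $\Cogen U$ under direct limits (pure-injectivity of $U$) makes it simultaneously $\Delta$-torsion and torsionless, hence zero, and on combining Lemma~\ref{lemma:fingengenerale}(1) with the closure of $\D\Cogen U$ under submodules, Proposition~\ref{prop:iffrej}, and the epimorphism $\Gamma N\twoheadrightarrow\Gamma T'$ coming from $\injdim U\leq 1$ to get $\Delta\Gamma T'=0$. Your double-limit computation of $N'\cap T$ is an avoidable detour --- the paper obtains the first assertion directly from $N/T=\varinjlim L_\lambda/\Rej_U L_\lambda$ with each term already in $\Cogen U$ --- but this is bookkeeping, not a difference of method.
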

\begin{proof}
Let $\{L_\lambda:\lambda\in \Lambda\}$ be the family of all finitely generated submodules of $N$. Applying the direct limit to the short exact sequence
\[0\to \Rej_U L_\lambda\to L_\lambda\to L_\lambda/\Rej_U L_\lambda\to 0\]
we get
\[0\to\varinjlim \Rej_U L_\lambda\to\varinjlim L_\lambda=N\to\varinjlim L_\lambda/\Rej_U L_\lambda\to 0\]
Since $N$ is $\Delta$-torsion, then $\Delta (\varinjlim L_\lambda/\Rej_U L_\lambda)=0$. Since $U_S$ is pure-injective \cite{Baz03}, $\Cogen U_S=\Ker\Gamma_S$ is closed under direct limits and therefore $\Gamma(\varinjlim L_\lambda/\Rej_U L_\lambda)=0$. Then we get $\varinjlim L_\lambda/\Rej_U L_\lambda=0$ and hence $N=\varinjlim \Rej_U L_\lambda$.
If $\Delta\Gamma N=0$, by Lemma~\ref{lemma:fingengenerale}, (1), and the closure of $\D\Cogen U$ with respect to submodules, the maps $\eta^0_{L_\lambda}$, $\lambda\in\Lambda$, are isomorphisms. Since $\Delta\Gamma N=0$ it is $\Delta\Gamma L_\lambda=0$ and hence the $L_\lambda$, $\lambda\in \Lambda$, are $\D$-reflexive.
Then, by Proposition~\ref{prop:iffrej}, also the $\Rej_UL_\lambda$'s are $\D$-reflexive.  Therefore $N$ is a direct limit of $\D$-reflexive $\Delta$-torsion submodules.
\end{proof}

Our aim now is to characterize the $\D$-reflexive $\Delta$-torsion modules. Compare the following with the notion of $\Delta$-density and of $\Delta$-linear compactness for $\Delta$-torsionless modules described in Definition~\ref{defi:denseDeltalc} and Remark~\ref{rem:Deltadense}.
\begin{definition}
Let $_RU_S$ be a 1-cotilting bimodule. If $N\in R\lMod$, then
\begin{enumerate}
\item $N$ is \textbf{$\Gamma$-linearly compact} if $N$ is $\Delta$-torsion and $\varprojlim p_\lambda:N\to \varprojlim L_\lambda$ is an epimorphism for every inverse system of epimorphisms $p_\lambda:N\to L_\lambda$ where the kernels $\Ker p_\lambda$ are $\Delta$-torsion and $2$-orthogonal;
\item $N$ is \textbf{$\Gamma$-dense} in $\Gamma^2 N$ ($\Gamma$-dense for short) if $N$ is $\Delta$-torsion and for each finitely generated submodule $F$ of $\Gamma N$,  denoted by $F\stackrel{i}{\hookrightarrow} \Gamma N$ the inclusion, we have $\Im(\Gamma i\circ\eta^0_N)=\Im\Gamma i$, i.e. the morphism
\[\Gamma i\circ\eta^0_N:N\to \Gamma F\text{ is epic}.\] 
\end{enumerate}
\end{definition}

The $\Gamma$-density is automatically satisfied on $2$-orthogonal $\Delta$-torsion modules:
\begin{proposition}\label{prop:Gdenso}
Let $_RU_S$ be a $1$-cotilting bimodule with $S=\End({}_RU)$. If $N$ is a $2$-orthogonal $\Delta$-torsion left $R$-module, then $N$ is $\Gamma$-dense.
\end{proposition}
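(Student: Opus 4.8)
The plan is to reduce the $\Gamma$-density of $N$ to the surjectivity of a single evaluation map attached to a Bongartz-type extension, and then to annihilate the resulting cokernel by checking that both $\Hom(-,U)$ and $\Ext^1(-,U)$ vanish on it, so that it dies by Definition~\ref{definizione:1cotilting}$(3)$. First I would unwind the definition. Since $\injdim U\leq 1$, the long exact sequence of $\Ext^\bullet_S(-,U)$ attached to $0\to F\to\Gamma N\to\Gamma N/F\to 0$ terminates at $\Ext^2_S(\Gamma N/F,U)=0$, so $\Gamma i\colon\Gamma^2 N\to\Gamma F$ is epic and $\Im\Gamma i=\Gamma F$. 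Hence $N$ is $\Gamma$-dense precisely when $g:=\Gamma i\circ\eta^0_N\colon N\to\Gamma F$ is epic for every finitely generated $F\subseteq\Gamma N$, i.e. when $Q:=\Coker g=0$. As $\Gamma i$ is epic, $Q$ is a quotient of $\Coker\eta^0_N$, which by Lemma~\ref{prop:key1}$(6)$ is $\Delta$-torsion; this is exactly where the hypothesis $\Delta\Gamma^2 N=0$ (the second half of $2$-orthogonality) is used. Therefore $\Delta Q=0$, and it remains only to prove $\Gamma Q=0$.

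Next I would realize $Q$ as the cokernel of an evaluation map. Choose generators $e_1,\dots,e_n$ of $F$ as a right $S$-module; they determine a right $S$-linear map $\psi\colon S^n\to\Gamma N$ with image $F$, and, viewing $(e_j)$ as a class in $\Ext^1_R(N,U^n)$, I form a representing extension $0\to U^n\xrightarrow{u}E\xrightarrow{q}N\to 0$. Applying $\Hom_R(-,U)$ and using the identification $\Delta_R U^n\cong S^n$ — this is precisely where the hypothesis $S=\End({}_RU)$ enters — together with $\Gamma_R U^n=0$ (condition $(2)$ in Definition~\ref{definizione:1cotilting}), the connecting homomorphism of the resulting long exact sequence is exactly $\psi$; hence $\Delta E\cong\Ker\psi$ and $\Gamma E\cong\Gamma N/F$. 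Chasing the naturality of the evaluation $\delta$ and of the map $\beta$ of Proposition~\ref{prop:cottheorem} along $q$ — noting that $\eta^0_{U^n}=\delta_{U^n}$ is an isomorphism — one checks that $\Coker g\cong\Coker\delta_E$, where $\delta_E\colon E\to\Delta^2 E$ is the evaluation map. This naturality bookkeeping, matching the connecting map $\psi$, the unit $\eta^0$, and the evaluation $\delta_E$, is the technical heart of the argument and the step I expect to be the main obstacle.

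Finally I would establish the clean fact that $\Gamma\,\Coker\delta_E=0$ for the evaluation map of any module, which then closes the argument. Factor $\delta_E=m\circ e$ with $e$ epic and $m\colon\Im\delta_E\hookrightarrow\Delta^2 E$ monic. The triangular identity $\Delta(\delta_E)\circ\delta_{\Delta E}=1_{\Delta E}$ shows that $\Delta(\delta_E)=\Delta e\circ\Delta m$ is a split epimorphism; since $e$ is epic, $\Delta e$ is monic, so $\Delta e$ must be an isomorphism and $\Delta m$ epic. Applying $\Hom_R(-,U)$ and $\Ext^1_R(-,U)$ to $0\to\Im\delta_E\xrightarrow{m}\Delta^2 E\to\Coker\delta_E\to 0$, and using $\Gamma\Delta^2 E=0$ (as $\Delta^2 E\in\Im\Delta\subseteq\Cogen U=\Ker\Gamma$), identifies $\Gamma\,\Coker\delta_E$ with $\Coker\Delta m=0$. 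Thus $\Gamma Q\cong\Gamma\,\Coker\delta_E=0$; combined with $\Delta Q=0$ from the first paragraph, Definition~\ref{definizione:1cotilting}$(3)$ forces $Q=0$, so $g$ is epic. As $F$ was an arbitrary finitely generated submodule of $\Gamma N$, the module $N$ is $\Gamma$-dense.
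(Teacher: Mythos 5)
Your proposal is correct, but it follows a genuinely different route from the paper's. The step you flag as the main obstacle --- the identification $\Coker g\cong\Coker\delta_E$ --- does go through, and the cleanest verification stays in the derived category rather than chasing cocycles: apply $\R\Delta^2$ to the triangle $U^n\xrightarrow{u}E\xrightarrow{q}N\to U^n[1]$. Since $\R\Delta^2U^n\cong U^n$ is concentrated in degree $0$, $H^{-1}\R\Delta^2N=\Delta\Gamma N=0$ and $H^0\R\Delta^2N=\Gamma^2N$, the cohomology ladder and the naturality of $\eta$ give a commutative diagram from $0\to U^n\to E\to N\to 0$ to $0\to U^n\to H^0\R\Delta^2E\to\Gamma^2N\to 0$ whose left vertical map $\delta_{U^n}$ is an isomorphism, so $\Coker\eta^0_E\cong\Coker\eta^0_N$; then the naturality in $M$ of the sequence $0\to\Gamma^2M\xrightarrow{\alpha_M}H^0\R\Delta^2M\xrightarrow{\beta_M}\Delta^2M\to 0$ of Proposition~\ref{prop:cottheorem} (it is really $\alpha$, not only $\beta$, that you need, since $\beta_N=0$ for torsion $N$), applied along $q$, matches the image of $\Gamma^2E$ under $\alpha_E$ with its image under $\Gamma^2q$ in $\Gamma^2N$. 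Because $\delta_E=\beta_E\circ\eta^0_E$ with $\Ker\beta_E=\Im\alpha_E$, and because $\Gamma i$ is epic with $\Ker\Gamma i=\Im\Gamma^2q$ (apply $\Delta_S$ to $0\to F\to\Gamma N\to\Gamma E\to 0$, using $\Delta\Gamma N=0$ and $\injdim U_S\leq 1$), both $\Coker\delta_E$ and $\Coker g$ are the quotient of $\Coker\eta^0_E\cong\Coker\eta^0_N$ by that common image, which closes your bookkeeping. The comparison with the paper: the paper works directly with $\phi=\Gamma i\circ\eta^0_N$, using the hypothesis $\Delta\Gamma^2N=0$ through Lemma~\ref{lemma:fingengenerale}(1b) and Proposition~\ref{prop:iffrej} to make $F$ and $\Rej_UF$ $\D$-reflexive (whence $\Delta\Coker\phi=0$), and Lemma~\ref{prop:key1}(4) to see that $\Gamma\phi$ is monic (whence $\Gamma\Coker\phi=0$); you instead feed $\Delta\Gamma^2N=0$ into Lemma~\ref{prop:key1}(6) to kill $\Delta Q$, transport $Q$ onto $\Coker\delta_E$ for a Bongartz-type extension built from generators of $F$ --- the same device the paper uses in the proof of Lemma~\ref{lemma:Bon}(2) --- and isolate the clean general fact $\Gamma\Coker\delta_E=0$ for an \emph{arbitrary} module $E$, which strengthens the unnumbered lemma opening Subsection~\ref{sec:Drifltorsionless} from $\Delta$-torsionless modules to all modules. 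Both arguments end identically, by annihilating a cokernel via Definition~\ref{definizione:1cotilting}(3). The paper's proof is shorter given its reflexivity toolkit; yours makes transparent why finite generation of $F$ is the operative hypothesis (it is exactly what produces the extension of $N$ by $U^n$), at the price of the naturality computation above, which you should write out explicitly in a final version.
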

\begin{proof}
Let $F$ be a finitely generated submodule of $\Gamma N$ and $i:F\to\Gamma N$ the inclusion.
Let us consider the composition
\[\xymatrix{
N\ar@/_1pc/[rr]_\phi\ar[r]^{\eta^0_N}&\Gamma^2 N\ar@{->>}[r]^-{\Gamma i}&\Gamma F=\Gamma\Rej_U F
}\]
We have to prove that $\phi$ is epic. Consider the exact sequence
\[\xymatrix{0\ar[r]&\Ker\phi\ar[r]& N\ar@{->>}[rd]\ar[rr]^\phi&&\Gamma \Rej_U F\ar[r]& \Coker\phi\ar[r]& 0\\
&&&I\ar@{^(->}[ru]}
\]
Since $\Delta\Gamma^2 N=0$, by Lemma~\ref{lemma:fingengenerale}, (1b), and Proposition~\ref{prop:iffrej}, $F$ and $\Rej_UF$ are $\D$-reflexive; in particular, by Proposition~\ref{prop:cottheorem}, $\Delta\Gamma \Rej_U F=0$ and hence $\Delta\Coker\phi=0$. Applying $\Delta$ we get the exact sequences
\[0\to \Delta I\to \Delta N=0\to\Delta\Ker\phi\to\Gamma I\to \Gamma N\text{ which implies }\Delta I=0\]
\[ 0=\Delta I\to\Gamma \Coker\phi\to \Gamma^2\Rej_U F\to\Gamma I\to 0\]
Therefore $\Gamma \Coker\phi$ is contained in $\Ker\Gamma\phi$. 
Since $\Gamma N$ is $\Delta$-torsion, we have the following commutative diagram
\[\xymatrix{\Gamma^2\Rej_U F=\Gamma^2 F\ar[rd]_{\Gamma^2 i}\ar[rr]^-{\Gamma\phi}&&\Gamma N\\
&\Gamma^3 N\ar[ru]_{\Gamma(\eta^0_N)}\\
\Rej_U F\ar[uu]^{\eta_{\Rej_U F}^0}_\cong\ar@{^(->}[r]^i&\Gamma N\ar[u]_{\eta^0_{\Gamma N}}
}\]
By Lemma~\ref{prop:key1}, (4),
$\Gamma(\eta^0_N)\circ\eta^0_{\Gamma N}=1_{\Gamma N}$; hence $\Gamma\phi$ is a monomorphism and $\Gamma\Coker\phi=0$. Since $_RU$ is cotilting and also $\Delta\Coker\phi=0$, we have $\Coker\phi=0$, i.e. $\phi$ is an epimorphism. 
\end{proof}

\begin{theorem}\label{teo:fond1}
Let $_RU_S$ be a $1$-cotilting bimodule. If a $\Delta$-torsion module $N$ is $\D$-reflexive, then it is $\Gamma$-linearly compact.
\end{theorem}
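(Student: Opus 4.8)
The plan is to reduce the assertion to the surjectivity of $\varprojlim p_\lambda\colon N\to\varprojlim L_\lambda$ and to produce that surjection by dualizing the inverse system through $\Gamma$, exploiting that $N$ is reflexive on both sides. First I would record the structural input. Since $N$ is $\Delta$-torsion and $\D$-reflexive, Proposition~\ref{lemma:Grifl=rifl} shows that $N$ is $\infty$-orthogonal and that each $\Gamma^i N$ is again $\Delta$-torsion and $\D$-reflexive; by Corollary~\ref{cor:H0Delta} the maps $\eta^0_N\colon N\to\Gamma^2 N$ and $\eta^0_{\Gamma N}\colon\Gamma N\to\Gamma^3 N$ are then isomorphisms. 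Writing $K_\lambda=\Ker p_\lambda$, the hypotheses $\Delta\Gamma K_\lambda=0=\Delta\Gamma^2 K_\lambda$ say exactly that $\Gamma K_\lambda$ and $\Gamma^2 K_\lambda$ are $\Delta$-torsion. Finally, $\varprojlim p_\lambda$ is epic if and only if $\varprojlim^{(1)} K_\lambda=0$, since $\varprojlim^{(1)}N=0$ as the constant system is weakly flabby (\cite{Jen72}, as in the proof of Proposition~\ref{prop:lincomp}).

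Next I would dualize. Applying $\Gamma$ to the $\Delta$-torsion short exact sequences $0\to K_\lambda\to N\to L_\lambda\to 0$ gives short exact sequences $0\to\Gamma L_\lambda\to\Gamma N\to\Gamma K_\lambda\to 0$, so the $\Gamma L_\lambda$ form a direct system of submodules of $\Gamma N$ with union $G:=\varinjlim\Gamma L_\lambda$ and cokernel $\Gamma N/G=Q:=\varinjlim\Gamma K_\lambda$. Because $\Hom(-,U)$ sends direct limits to inverse limits, the class of $\Delta$-torsion modules is closed under direct limits, so $Q$ is $\Delta$-torsion. Now I use $\injdim{}_RU\le 1$: applying $\Gamma=\Ext^1_R(-,U)$ to $0\to G\to\Gamma N\to Q\to 0$ and using $\Ext^2(-,U)=0$ produces an epimorphism $\Gamma^2 N\twoheadrightarrow\Gamma G$, while the Milnor-type description of $\Ext^1_R(\varinjlim\Gamma L_\lambda,U)$, in which the correction terms are governed by $\Delta\Gamma K_\lambda=0$, yields a surjection $\Gamma G\twoheadrightarrow\varprojlim\Gamma^2 L_\lambda$ and the vanishing $\varprojlim^{(1)}\Gamma^2 K_\lambda=0$. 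Composing with $\eta^0_N$ and using naturality of $\eta^0$, the resulting epimorphism $N\twoheadrightarrow\varprojlim\Gamma^2 L_\lambda$ is precisely $\varprojlim\eta^0_{L_\lambda}\circ\varprojlim p_\lambda$.

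It then remains to transport this back from $\varprojlim\Gamma^2 L_\lambda$ to $\varprojlim L_\lambda$. Each $\eta^0_{L_\lambda}\colon L_\lambda\to\Gamma^2 L_\lambda=H^0\R\Delta^2 L_\lambda$ is epic (a diagram chase on the ladder obtained by applying $\R\Delta^2$ to $0\to K_\lambda\to N\to L_\lambda\to 0$, using that $\eta^0_N$ is an isomorphism), with $\Delta$-torsion kernel $E_\lambda=\Ker\eta^0_{L_\lambda}$; these assemble into a sub-inverse-system of $(L_\lambda)$. Since the composite $N\to\varprojlim L_\lambda\xrightarrow{\varprojlim\eta^0_{L_\lambda}}\varprojlim\Gamma^2 L_\lambda$ is onto, we get $\Im(\varprojlim p_\lambda)+\varprojlim E_\lambda=\varprojlim L_\lambda$, so everything comes down to absorbing $\varprojlim E_\lambda$, equivalently to proving $\varprojlim^{(1)} K_\lambda=0$. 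This is the main obstacle, and it is genuinely unavoidable by a termwise argument: the individual $L_\lambda$ and $K_\lambda$ need not be $\D$-reflexive, and in fact $\Delta\Gamma L_\lambda$ may be nonzero since the torsion pair $(\Ker\Delta,\Cogen U)$ is not hereditary (cf. Example~\ref{ex:Gabri}), so one cannot simply invoke Proposition~\ref{prop:lincomp} on the system $(L_\lambda)$. Instead I would sandwich $K_\lambda$ via $\eta^0_{K_\lambda}$ between $\Ker\eta^0_{K_\lambda}$, which is $\Delta$-torsionless by Lemma~\ref{prop:key1}(5), and $\Coker\eta^0_{K_\lambda}$, which is $\Delta$-torsion by Lemma~\ref{prop:key1}(6), and then combine the already established $\varprojlim^{(1)}\Gamma^2 K_\lambda=0$ with the pure-injectivity of $U$ (\cite{Baz03}, together with Jensen's vanishing of $\varprojlim^{(1)}$ for inverse systems arising as $\Hom(-,U)$-duals of a direct system \cite{Jen72}) to kill $\varprojlim^{(1)}$ of each of the three pieces. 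Feeding this back through the ladder yields $\varprojlim^{(1)} K_\lambda=0$ and hence the surjectivity of $\varprojlim p_\lambda$, the overall scheme mirroring, on the $\Gamma$-torsion side, the reduction to torsion and torsionless parts used in the proof of Proposition~\ref{prop:lincomp}.
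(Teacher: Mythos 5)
There is a genuine gap, and it sits exactly where you yourself locate the main difficulty. Your preliminary reductions are sound: the structural facts from Proposition~\ref{lemma:Grifl=rifl} and Corollary~\ref{cor:H0Delta}, the dualized system $0\to\Gamma L_\lambda\to\Gamma N\to\Gamma K_\lambda\to 0$, and the surjectivity of each $\eta^0_{L_\lambda}$ all check out (though note that $\varprojlim^{(1)}K_\lambda=0$ is only \emph{sufficient} for $\varprojlim p_\lambda$ to be epic, not equivalent to it). But the step that is supposed to close the argument --- ``combine the already established $\varprojlim^{(1)}\Gamma^2 K_\lambda=0$ with the pure-injectivity of $U$ and Jensen's vanishing to kill $\varprojlim^{(1)}$ of each of the three pieces'' --- has no support. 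The inverse systems $(\Ker\eta^0_{K_\lambda})$ and $(\Coker\eta^0_{K_\lambda})$ do not arise as $\Hom(-,U)$-duals of direct systems, so no Jensen-type vanishing applies to them; and mere membership in the torsionless class or in $\Ker\Delta$ imposes no constraint whatsoever on $\varprojlim^{(1)}$. Likewise, the ``Milnor-type description'' of $\Ext^1_R(\varinjlim\Gamma L_\lambda,U)$ that you invoke is available verbatim only for countable chains; over the arbitrary directed set $\Lambda$ the necessary exactness properties come from pure-injectivity of $U$, which the paper packages once and for all into Proposition~\ref{prop:lincomp} rather than re-deriving ad hoc.

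The key idea you are missing is that the route you declare blocked --- invoking Proposition~\ref{prop:lincomp} on the system $(L_\lambda)$ --- is exactly the paper's proof, and your reason for blocking it (that $\Delta\Gamma L_\lambda$ may be nonzero) is precisely what the hypotheses rule out, although this requires an argument. The paper shows that each $\eta^0_{K_\lambda}$ is an isomorphism: it is mono because $\eta^0_N$ is an isomorphism; for epi one checks that $C_\lambda:=\Coker\eta^0_{K_\lambda}$ satisfies both $\Delta C_\lambda=0$ (from $\Delta\Gamma^2K_\lambda=0$, i.e.\ the $2$-orthogonality of the kernels) and $\Gamma C_\lambda=0$. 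The latter is where $\D$-reflexivity of $\Gamma N$ enters: the epimorphisms $\Gamma N\tra\Gamma K_\lambda$ and $\Gamma^3 N\tra \Gamma^3 K_\lambda$, combined with $\eta^0_{\Gamma N}$ being an isomorphism (Proposition~\ref{lemma:Grifl=rifl}) and $\eta^0_{\Gamma K_\lambda}$ being mono (Proposition~\ref{prop:Dtorsionless}), force $\eta^0_{\Gamma K_\lambda}$ to be an isomorphism; then $\Gamma(\eta^0_{K_\lambda})$ is an isomorphism by Lemma~\ref{prop:key1}(4), and the exact sequence $0\to\Gamma C_\lambda\to\Gamma^3K_\lambda\to\Gamma K_\lambda\to 0$ gives $\Gamma C_\lambda=0$. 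The cotilting axiom (3) in Definition~\ref{definizione:1cotilting} then yields $C_\lambda=0$, so $\eta^0_{K_\lambda}$ is an isomorphism; feeding this into the six-term ladder gives $\Delta\Gamma L_\lambda=0$ and $\eta^0_{L_\lambda}$ an isomorphism, and Proposition~\ref{prop:lincomp} finishes the proof. Without this isomorphism statement your scheme has no visible way to produce the $\varprojlim^{(1)}$-vanishing it needs, so the proposal as written does not prove the theorem.
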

\begin{proof}
By Proposition~\ref{lemma:Grifl=rifl}, $N$ and $\Gamma^iN$ are $\Delta$-torsion and $\D$-reflexive for each $i\geq 0$.
Consider an inverse system of epimorphisms
\[0\to K_\lambda\to N\to N_\lambda\to 0\]
where the kernels $K_\lambda$ are $2$-orthogonal and $\Delta$-torsion modules.
Since $K_\lambda$, $N$ and $N_\lambda$ are $\Delta$-torsion and $\Delta\Gamma N=0$, by Proposition~\ref{prop:cottheorem} and Corollary~\ref{cor:H0Delta}, we have
\[H^{-1}\R\Delta^2 N=0,\ H^{0}\R\Delta^2 N_\lambda=\Gamma^2 N_\lambda,\ H^{0}\R\Delta^2 K_\lambda=\Gamma^2 K_\lambda,\ 
H^{0}\R\Delta^2 N=\Gamma^2 N\] 
and hence we have the following commutative diagram with exact rows
\[\xymatrix{
&0\ar[r]&K_\lambda\ar[r]\ar[d]_{\eta^0_{K_\lambda}}&N\ar[r]\ar[d]^\cong_{\eta^0_{N}}&N_\lambda\ar[r]\ar[d]^{\eta^0_{N_\lambda}}&0\\
0\ar[r]&\Delta\Gamma N_\lambda\ar[r]&\Gamma^2 K_\lambda\ar[r]&\Gamma^2 N\ar[r]&\Gamma^2 N_\lambda\ar[r]&0}\]
Since $\eta^0_N$ is an isomorphism,  we have that $\eta^0_{K_\lambda}$ is a monomorphism and $\eta^0_{N_\lambda}$ is an epimorphism.
Let us consider the short exact sequence
\[0\to K_\lambda\stackrel{\eta^0_{K_\lambda}}{\to} \Gamma^2 K_\lambda\to \Coker\eta^0_{K_\lambda}\to 0\]
Let us prove that $\Coker\eta^0_{K_\lambda}=0$ and hence $\eta^0_{K_\lambda}$ is an isomorphism.
We have the exact sequences
\[0\to\Delta \Coker\eta^0_{K_\lambda}\to \Delta\Gamma^2 K_\lambda=0, \]
which implies $\Delta \Coker\eta^0_{K_\lambda}=0$ and
\[\Delta K_\lambda=0\to\Gamma \Coker\eta^0_{K_\lambda}\to\Gamma^3 K_\lambda\stackrel{\Gamma(\eta^0_{K_\lambda})}{\to}\Gamma K_\lambda\to 0\]
By Proposition~\ref{prop:Dtorsionless}, $\eta^0_{\Gamma K_\lambda}$ is a monomorphism; therefore from the commutative diagram
\[\xymatrix{
\Gamma N\ar[d]^{\eta^0_{\Gamma N}}_\cong\ar[r]&\Gamma K_\lambda\ar[r]\ar@{^(->}[d]^{\eta^0_{\Gamma K_\lambda}}&0\\
{}\Gamma^3 N\ar[r]&\Gamma^3 K_\lambda\ar[r]&0
}
\]
we get that $\eta^0_{\Gamma K_\lambda}$ is an isomorphism. Then 
by Lemma~\ref{prop:key1}, (4), $\Gamma(\eta^0_{K_\lambda})$ is an isomorphism and hence $\Gamma \Coker\eta^0_{K_\lambda}=0$. Then  $\Coker\eta^0_{K_\lambda}=0$ and hence $\eta^0_{K_\lambda}$ is an isomorphism; then $\Delta\Gamma N_\lambda=0$ and $\eta^0_{N_\lambda}$ is an isomorphism.  By Proposition~\ref{prop:lincomp}, we conclude that $\varprojlim p_\lambda$ is an epimorphism.
\end{proof}

Let us prove now the converse of Theorem~\ref{teo:fond1}.
\begin{theorem}\label{teo:fondam2}
Let $_RU_S$ be a cotilting bimodule with $S=\End {}_RU$ and $N$ a $\D$-torsionless left $R$-module. If $N$ is $\Gamma$-linearly compact and $2$-orthogonal, then $N$ is $\D$-reflexive.
\end{theorem}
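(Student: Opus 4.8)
The plan is to reduce $\D$-reflexivity to a single surjectivity statement via Proposition~\ref{prop:cottheorem}, and then to manufacture, out of the finitely generated submodules of $\Gamma N$, an inverse system whose kernels are $\Delta$-torsion and $2$-orthogonal, so that the $\Gamma$-linear compactness hypothesis applies. Since $N$ is $\Gamma$-linearly compact it is $\Delta$-torsion, so by Corollary~\ref{cor:H0Delta} we have $H^0\R\Delta^2 N=\Gamma^2 N$ and $\eta^0_N$ is identified with the canonical map $N\to\Gamma^2 N$; being $\D$-torsionless this map is a monomorphism, and $\Delta\Gamma N=0$ because $N$ is $2$-orthogonal. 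Hence, by Proposition~\ref{prop:cottheorem}, it suffices to prove that $\eta^0_N\colon N\to\Gamma^2 N$ is an epimorphism.

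First I would realize $\Gamma^2 N$ as an inverse limit. Let $\{L_\lambda\}_{\lambda\in\Lambda}$ be the finitely generated submodules of $\Gamma N$ and put $R_\lambda:=\Rej_U L_\lambda$; by Lemma~\ref{lemma:rejectsfg} we have $\Gamma N=\varinjlim_\lambda R_\lambda$ with each $R_\lambda$ $\Delta$-torsion, and since $\Delta\Gamma^2 N=0$, Lemma~\ref{lemma:fingengenerale}~(1b) together with Proposition~\ref{prop:iffrej} shows each $R_\lambda$ is $\D$-reflexive; then Proposition~\ref{lemma:Grifl=rifl} shows $\Gamma R_\lambda$ and $\Gamma^2 R_\lambda$ are $\Delta$-torsion and $\D$-reflexive. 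Because $\Delta R_\lambda=0$, applying $\Hom(-,U)$ and $\Ext^1(-,U)$ to the standard presentation of the directed colimit $\varinjlim_\lambda R_\lambda=\Gamma N$ collapses the correction term (it is $\varprojlim^1\Delta R_\lambda=0$) and yields a canonical isomorphism $\Gamma^2 N\cong\varprojlim_\lambda\Gamma R_\lambda$ with structure maps $\Gamma(R_\lambda\hookrightarrow\Gamma N)$. Under this identification the map induced by the compositions $\phi_\lambda:=\Gamma(R_\lambda\hookrightarrow\Gamma N)\circ\eta^0_N\colon N\to\Gamma R_\lambda$ is precisely $\eta^0_N$, and by $\Gamma$-density (Proposition~\ref{prop:Gdenso}, applicable since $N$ is $2$-orthogonal and $\Delta$-torsion) each $\phi_\lambda$ is an epimorphism, with $\Gamma\phi_\lambda$ a monomorphism.

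The heart of the argument is to verify that each $K_\lambda:=\Ker\phi_\lambda$ is $\Delta$-torsion and $2$-orthogonal. Applying $\Delta$ to $0\to K_\lambda\to N\to\Gamma R_\lambda\to 0$ and using $\Delta N=0=\Delta\Gamma R_\lambda$ exhibits $\Delta K_\lambda$ inside $\Gamma^2 R_\lambda\xrightarrow{\Gamma\phi_\lambda}\Gamma N$; as $\Gamma\phi_\lambda$ is monic this gives $\Delta K_\lambda=0$ and a short exact sequence $0\to\Gamma^2 R_\lambda\xrightarrow{\Gamma\phi_\lambda}\Gamma N\to\Gamma K_\lambda\to 0$. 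Applying $\Delta$ again, together with $\Delta\Gamma N=0$ and $\Delta\Gamma^2 R_\lambda=0$, yields $\Delta\Gamma K_\lambda=0$ and a short exact sequence $0\to\Gamma^2 K_\lambda\to\Gamma^2 N\to\Gamma^3 R_\lambda\to 0$. The step I expect to be the main obstacle is the remaining orthogonality $\Delta\Gamma^2 K_\lambda=0$: a direct chase on this last sequence only identifies $\Delta\Gamma^2 K_\lambda$ with $\Ker\bigl(\Gamma^2(\Gamma\phi_\lambda)\bigr)$. To kill it I would invoke naturality of $\eta^0$ against the monomorphism $\Gamma\phi_\lambda$, namely $\eta^0_{\Gamma N}\circ\Gamma\phi_\lambda=\Gamma^2(\Gamma\phi_\lambda)\circ\eta^0_{\Gamma^2 R_\lambda}$: the left-hand composite is monic (both $\Gamma\phi_\lambda$ and $\eta^0_{\Gamma N}$ are monic, the latter by Proposition~\ref{prop:Dtorsionless}), while $\eta^0_{\Gamma^2 R_\lambda}$ is an isomorphism since $\Gamma^2 R_\lambda$ is $\D$-reflexive; hence $\Gamma^2(\Gamma\phi_\lambda)$ is monic and $\Delta\Gamma^2 K_\lambda=0$.

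With the kernels $K_\lambda$ now $\Delta$-torsion and $2$-orthogonal, the $\Gamma$-linear compactness of $N$ applies to the inverse system of epimorphisms $\{\phi_\lambda\colon N\to\Gamma R_\lambda\}$ and gives that $\varprojlim_\lambda\phi_\lambda\colon N\to\varprojlim_\lambda\Gamma R_\lambda\cong\Gamma^2 N$ is an epimorphism. Since this map is $\eta^0_N$, we conclude that $\eta^0_N$ is epic, hence an isomorphism, so $N$ is $\D$-reflexive by Proposition~\ref{prop:cottheorem}.
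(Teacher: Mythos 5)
Your proposal is correct and takes essentially the same route as the paper's own proof: reduce to surjectivity of $\eta^0_N\colon N\to\Gamma^2 N$, write $\Gamma N=\varinjlim R_\lambda$ with $R_\lambda=\Rej_U L_\lambda$ $\D$-reflexive (the paper invokes Lemma~\ref{lemma:rejectsfg}, you unpack it via Lemma~\ref{lemma:fingengenerale}~(1b) and Proposition~\ref{prop:iffrej}), obtain the inverse system of epimorphisms $\phi_\lambda$ (the paper's $\theta_\lambda$) from $\Gamma$-density, verify the kernels are $2$-orthogonal $\Delta$-torsion using $\Gamma(\eta^0_N)\circ\eta^0_{\Gamma N}=1_{\Gamma N}$ and naturality of $\eta^0$, and finish by $\Gamma$-linear compactness under the identification $\varprojlim\Gamma R_\lambda\cong\Gamma^2 N$, which the paper uses tacitly and you justify correctly by noting that all derived-limit correction terms are derived limits of the zero system $\Delta R_\lambda=0$. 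One minor citation slip worth fixing: the monomorphy of $\Gamma\phi_\lambda$ is not part of the \emph{statement} of Proposition~\ref{prop:Gdenso} (which only asserts $\phi_\lambda$ is epic), and as written your deduction $\Delta K_\lambda=\Ker(\Gamma\phi_\lambda)=0$ would be circular; it is, however, exactly what the proof of Proposition~\ref{prop:Gdenso} establishes, and it also follows in one line from your own naturality argument applied one level down, since $\Gamma\phi_\lambda\circ\eta^0_{R_\lambda}$ equals the inclusion $R_\lambda\hookrightarrow\Gamma N$ with $\eta^0_{R_\lambda}$ an isomorphism.
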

\begin{proof}
Since $\Delta\Gamma N=0$ and $N$ is $\D$-torsionless, we have to prove only that $\eta^0_N$ is an epimorphism.
By Lemma~\ref{lemma:rejectsfg}, the $S$-module $\Gamma N$ is a direct limit of the rejects of its finitely generated submodules $\{\Rej_U F_\lambda:\lambda\in\Lambda\}$, and these are $\D$-reflexive. 
Denoted by $i_\lambda$ and $j_\lambda$ the inclusions
\[ F_\lambda\stackrel{i_\lambda}{\hookrightarrow} \Gamma N\text{ and }\Rej_U F_\lambda\stackrel{j_\lambda}{\hookrightarrow}  F_\lambda,\]
let us consider the diagram
\[\xymatrix{
N\ar[d]^{\eta^0_N}&\Gamma \Rej_U F_\lambda\\
{}\Gamma^2 N\ar[r]^{\Gamma i_\lambda}&\Gamma F_\lambda\ar[u]^{\Gamma j_\lambda}_\cong
}
\]
Since $N$ is $\Gamma$-dense by Proposition~\ref{prop:Gdenso}, $\Gamma i_\lambda\circ \eta^0_N$ is an epimorphism and therefore also $\theta_\lambda:=\Gamma j_\lambda\circ\Gamma i_\lambda\circ \eta^0_N$ is an epimorphism. Let us consider the short exact sequence
\[\xymatrix{0\ar[r]& \Ker\theta_\lambda\ar[r]& N\ar[r]^-{\theta_\lambda}&\Gamma\Rej_U F_\lambda\ar[r]& 0}\]
Let us prove that $\Ker\theta_\lambda$ are $2$-orthogonal $\Delta$-torsion modules.
Applying $\Delta$ we get 
the following commutative diagram
\[\xymatrix{
{}\Delta N=0\ar[r]& \Delta \Ker\theta_\lambda\ar[r]&\Gamma^2 \Rej_U F_\lambda \ar[r]^-{\Gamma\theta_\lambda}\ar[rd]|-{\Gamma^2(i_\lambda\circ j_\lambda)}&\Gamma N \ar[r]&\Gamma\Ker\theta_\lambda\ar[r]&0
\\
&&&\Gamma^3 N\ar[u]_{\Gamma\eta^0_N}\\
&&\Rej_U F_\lambda\ar[uu]^{\eta^0_{\Rej_U F_\lambda}}_\cong\ar@{^(->}^{i_\lambda\circ j_\lambda}[r]&\Gamma N\ar[u]_{\eta^0_{\Gamma N}
}}
\]
Since $\Gamma(\eta^0_N)\circ\eta^0_{\Gamma N}=1_{\Gamma N}$ by Lemma~\ref{prop:key1}, (4), we have that $\Delta\Ker\theta_\lambda=0$ and hence $\Ker\theta_\lambda$ is $\Delta$-torsion. Applying $\Delta$ to the short exact sequence
\[0\to\Gamma^2\Rej_U F_\lambda\to \Gamma N\to \Gamma\Ker\theta_\lambda\to 0\]
we get the exact sequence $0\to \Delta\Gamma\Ker\theta_\lambda\to \Delta\Gamma N=0$ and hence $\Delta\Gamma\Ker\theta_\lambda=0$;
since $\Rej_U F_\lambda$ is $\D$-reflexive, by Proposition~\ref{lemma:Grifl=rifl} we have
\[\xymatrix{
0\ar[r]& \Delta\Gamma^2\Rej_U F_\lambda=0\ar[r]& \Gamma^2\Ker\theta_\lambda\ar[r]& \Gamma^2 N\ar[r]^-{\Gamma^2\theta_\lambda}&\Gamma^3 \Rej_U F_\lambda\ar[r]& 0}
\]
Applying again $\Delta$, we get the commutative diagram
\[\xymatrix{
{}\Delta\Gamma^2 N=0\ar[r]&\Delta\Gamma^2\Ker\theta_\lambda
\ar[r]&\Gamma^4\Rej_U F_\lambda\ar[r]^{\Gamma^3\theta_\lambda}&\Gamma^3 N\ar[r]& \Gamma^3\Ker\theta_\lambda\ar[r]&0\\
&&{}\Gamma^2\Rej_U F_\lambda\ar[u]^{\eta^0_{\Gamma^2\Rej_U F_\lambda}}_\cong\ar@{^(->}[r]^{\Gamma\theta_\lambda}&\Gamma N\ar@{^(->}[u]_{\eta^0_{\Gamma N}}
}\]
Then $\Gamma^3\theta_\lambda$ is mono and hence $\Gamma^2\Ker \theta_\lambda$ is $\Delta$-torsion: therefore $\Ker \theta_\lambda$ are  $2$-orthogonal $\Delta$-torsion modules.
Since $N$ is $\Gamma$-linearly compact, taking the inverse limit of  the commutative diagram
\[\xymatrix{N\ar[d]^{\eta^0_N}\ar@{->>}[rr]^{\theta_\lambda}&&\Gamma\Rej_U F_\lambda\\
\Gamma^2 N\ar[rr]^{\Gamma(i_\lambda\circ j_\lambda)}&&\Gamma\Rej_U F_\lambda\ar@{=}[u]
}
\]
we get the commutative diagram
\[\xymatrix{N\ar[d]^{\eta^0_N}\ar@{->>}[rr]^{\varprojlim\theta_\lambda}&&\varprojlim \Gamma\Rej_U F_\lambda\ar@{=}[d]\\
\Gamma^2 N\ar@{=}[d]\ar[rr]^{\varprojlim\Gamma(i_\lambda\circ j_\lambda)}
&&\varprojlim \Gamma\Rej_U F_\lambda\ar@{=}[d]\\
\Gamma^2 N\ar@{=}[d]\ar[rr]^{\Gamma\varinjlim(i_\lambda\circ j_\lambda)}
&&\Gamma\varinjlim \Rej_U F_\lambda\ar@{=}[d]\\
\Gamma^2 N\ar[rr]^{1_{\Gamma^2N}}&&\Gamma^2 N
}
\]
and hence $\eta^0_N$ is an epimorphism.
\end{proof}

\begin{corollary}\label{cor:fond}
Let $_RU_S$ be a 1-cotilting bimodule and $N$ a $\Delta$-torsion module. Then $N$ is $\D$-reflexive if and only if it is $3$-orthogonal and $\Gamma$-linearly compact.
\end{corollary}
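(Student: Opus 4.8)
The plan is to read this off directly from the two fundamental theorems of the subsection, Theorem~\ref{teo:fond1} and Theorem~\ref{teo:fondam2}, after matching their hypotheses to the single condition of $3$-orthogonality. First I would dispose of the forward implication. Assume the $\Delta$-torsion module $N$ is $\D$-reflexive. Then Theorem~\ref{teo:fond1} immediately yields that $N$ is $\Gamma$-linearly compact, so it remains only to produce the orthogonality. For this I would invoke Proposition~\ref{lemma:Grifl=rifl}, which tells us that every $\Gamma^i N$ is again a $\Delta$-torsion $\D$-reflexive module. Since any $\D$-reflexive module $L$ satisfies $\Delta\Gamma L=0$ (Proposition~\ref{prop:cottheorem}), applying this to $L=\Gamma^{i} N$ gives $\Delta\Gamma^{i+1}N=0$ for every $i\geq 0$. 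Together with $\Delta\Gamma N=0$ this says $N$ is $\infty$-orthogonal, and in particular $3$-orthogonal, which is more than enough.

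For the converse I would exploit the fact that $3$-orthogonality packages exactly the two separate hypotheses that Theorem~\ref{teo:fondam2} requires. Indeed, if $N$ is $3$-orthogonal then $\Delta\Gamma N=0=\Delta\Gamma^{3}N$, so Proposition~\ref{prop:Dtorsionless} shows that $N$ is $\D$-torsionless; and $3$-orthogonality trivially entails $2$-orthogonality. Hence, assuming $N$ is also $\Gamma$-linearly compact, the hypotheses of Theorem~\ref{teo:fondam2} (a $\D$-torsionless, $2$-orthogonal, $\Gamma$-linearly compact module) are all met, and that theorem concludes that $N$ is $\D$-reflexive.

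The only genuine point to get right --- and the reason the statement uses $3$-orthogonality rather than the weaker $2$-orthogonality appearing in Theorem~\ref{teo:fondam2} --- is this bookkeeping in the converse: the extra vanishing $\Delta\Gamma^{3}N=0$ is precisely what Proposition~\ref{prop:Dtorsionless} needs to upgrade $N$ to a $\D$-torsionless module, a standing hypothesis of Theorem~\ref{teo:fondam2} that is otherwise unavailable for an arbitrary $\Delta$-torsion module. I expect no computational obstacle beyond this; all the real work has been done in the two theorems and in Propositions~\ref{lemma:Grifl=rifl} and \ref{prop:Dtorsionless}. I would also note that the converse implicitly carries the normalization $S=\End({}_RU)$ required by Theorem~\ref{teo:fondam2}.
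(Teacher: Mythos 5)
Your proposal is correct and follows essentially the same route as the paper: the converse is exactly the paper's argument, using $\Delta\Gamma N=0=\Delta\Gamma^3 N$ together with Proposition~\ref{prop:Dtorsionless} to obtain $\D$-torsionlessness and then applying Theorem~\ref{teo:fondam2}, while the forward direction combines Theorem~\ref{teo:fond1} with Proposition~\ref{lemma:Grifl=rifl} to get $\Gamma$-linear compactness and ($\infty$-, hence $3$-)orthogonality. Your closing remark about the normalization $S=\End({}_RU)$ implicitly needed for Theorem~\ref{teo:fondam2} is a fair observation that the paper's terse proof leaves tacit.
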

\begin{proof}
If $\Delta\Gamma N=0=\Delta\Gamma^3 N$, then $N$ is a $\D$-torsionless module (see Proposition~\ref{prop:Dtorsionless}). Then we conclude by Proposition~\ref{lemma:Grifl=rifl} and Theorems~\ref{teo:fond1}, \ref{teo:fondam2}.
\end{proof}

We have the following closure properties for $\D$-reflexive $\Delta$-torsion modules.
\begin{proposition}
Let $_RU_S$ be a cotilting bimodule and $p:M\to N$ an epimorphism with $\Delta(\Ker p)=0$. If $M$ is a $\D$-reflexive $\Delta$-torsion module, then $N$ is $\D$-reflexive if and only if $N$ is 1-orthogonal and $\Ker p$ is 2-orthogonal; in such a case also $\Ker p$ is $\D$-reflexive.
\end{proposition}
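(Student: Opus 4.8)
Write $K=\Ker p$. The strategy is to push the short exact sequence $0\to K\to M\to N\to 0$ through $\R\Delta^2$ and to compare the long cohomology sequence with the original one via the unit $\eta$. Since $M$ is $\Delta$-torsion and $N$ is a quotient of $M$, both $N$ and $K$ are $\Delta$-torsion, and the hypothesis $\Delta(\Ker p)=0$ is exactly the assertion that $K$ is $\Delta$-torsion. For a $\Delta$-torsion module $L$ one has $H^{-1}\R\Delta^2 L=\Delta\Gamma L$, $H^0\R\Delta^2 L=\Gamma^2 L$ and $H^iL=0$ otherwise (Proposition~\ref{prop:cottheorem}, Corollary~\ref{cor:H0Delta}); hence the triangle $K\to M\to N\to K[1]$ produces the exact sequence
\[0\to\Delta\Gamma K\to\Delta\Gamma M\to\Delta\Gamma N\to\Gamma^2 K\to\Gamma^2 M\to\Gamma^2 N\to 0.\]
Because $M$ is a $\D$-reflexive $\Delta$-torsion module, Proposition~\ref{lemma:Grifl=rifl} gives that $\eta^0_M$ is an isomorphism, each $\Gamma^iM$ is again $\D$-reflexive $\Delta$-torsion, and $\Delta\Gamma^iM=0$ for all $i$. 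In particular $\Delta\Gamma M=0$, so the displayed sequence forces $\Delta\Gamma K=0$: thus $K$ is automatically $1$-orthogonal, which is the first thing I would record.

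Once $\Delta\Gamma N=0$ is known (it is part of $\D$-reflexivity of $N$ in one direction and an assumption in the other), the sequence truncates to $0\to\Gamma^2 K\to\Gamma^2 M\to\Gamma^2 N\to 0$ and yields the commutative ladder with exact rows
\[\xymatrix@-1pc{
0\ar[r]& K\ar[d]^{\eta^{0}_K}\ar[r]& M\ar[d]^{\eta^{0}_M}_\cong\ar[r]& N\ar[d]^{\eta^{0}_N}\ar[r]& 0\\
0\ar[r]&\Gamma^2 K\ar[r]& \Gamma^2 M\ar[r]& \Gamma^2 N\ar[r]& 0}\]
The direction ``$N$ $\D$-reflexive $\Rightarrow$ the rest'' is then formal: $N$ being $\Delta$-torsion and $\D$-reflexive makes $\eta^0_N$ an isomorphism, the five lemma upgrades $\eta^0_K$ to an isomorphism, and Proposition~\ref{prop:cottheorem} gives that $K$ is $\D$-reflexive; applying Proposition~\ref{lemma:Grifl=rifl} to $K$ shows $\Gamma K$ is $\D$-reflexive, i.e.\ $\Delta\Gamma^2K=0$, so $K$ is $2$-orthogonal, while $N$ is $1$-orthogonal trivially.

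For the converse I would assume $\Delta\Gamma N=0$ and $K$ $2$-orthogonal. Then $\eta^0_K$ is a monomorphism, being the restriction of the isomorphism $\eta^0_M$ along the monomorphisms $K\hookrightarrow M$ and $\Gamma^2K\hookrightarrow\Gamma^2M$, so everything reduces to proving $C:=\Coker\eta^0_K=0$; once $\eta^0_K$ is an isomorphism a second diagram chase on the ladder makes $\eta^0_N$ an isomorphism, and both $K$ and $N$ are $\D$-reflexive by Proposition~\ref{prop:cottheorem}. To kill $C$ I would imitate the cokernel argument of Theorem~\ref{teo:fond1}: applying $\Delta$ to $0\to K\xrightarrow{\eta^0_K}\Gamma^2 K\to C\to 0$ gives $0\to\Delta C\to\Delta\Gamma^2 K$ together with $\Delta K=0\to\Gamma C\to\Gamma^3 K\xrightarrow{\Gamma(\eta^0_K)}\Gamma K\to 0$. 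The $2$-orthogonality $\Delta\Gamma^2 K=0$ yields $\Delta C=0$; and since Lemma~\ref{prop:key1}(4) provides $\Gamma(\eta^0_K)\circ\eta^0_{\Gamma K}=1_{\Gamma K}$, the map $\Gamma(\eta^0_K)$ is an isomorphism as soon as $\eta^0_{\Gamma K}$ is, forcing $\Gamma C=0$. As $U$ is $1$-cotilting (Definition~\ref{definizione:1cotilting}), $\Delta C=\Gamma C=0$ gives $C=0$.

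The genuine work, and the step I expect to be the main obstacle, is proving that $\eta^0_{\Gamma K}$ is an isomorphism. Applying $\Gamma$ to $0\to K\to M\to N\to 0$ (all $\Delta$-terms vanishing by torsion) gives a surjection $\Gamma M\twoheadrightarrow\Gamma K$, and naturality of $\eta^0$ along it fits into a square whose left vertical $\eta^0_{\Gamma M}$ is an isomorphism, since $\Gamma M$ is $\D$-reflexive, and whose bottom map $\Gamma^3M\to\Gamma^3 K$ is surjective; here one must remember that $\Gamma K$ is $\Delta$-torsion precisely because $\Delta\Gamma K=0$, so that $H^0\R\Delta^2\Gamma K=\Gamma^3 K$ and the square lies where claimed. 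As $\eta^0_{\Gamma K}$ is a monomorphism (Proposition~\ref{prop:Dtorsionless}) and the composite $\eta^0_{\Gamma K}\circ(\Gamma M\twoheadrightarrow\Gamma K)$ is epic, $\eta^0_{\Gamma K}$ is onto, hence an isomorphism. Keeping careful track of which modules are $\Delta$-torsion, so that $H^0\R\Delta^2=\Gamma^2$ applies at each stage, is the only real source of friction.
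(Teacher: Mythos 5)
Your proof is correct, and in the forward direction it coincides with the paper's: the same ladder over $0\to\Delta\Gamma K\to\Delta\Gamma M=0\to\Delta\Gamma N\to\Gamma^2K\to\Gamma^2M\to\Gamma^2N\to0$, the five lemma, and Proposition~\ref{lemma:Grifl=rifl} applied to $K$. The pivotal step of the converse is also shared: both you and the paper show $\eta^0_{\Gamma K}$ is an isomorphism via the epimorphism $\Gamma M\twoheadrightarrow\Gamma K$ coming from $0\to\Gamma N\to\Gamma M\to\Gamma K\to0$, naturality of $\eta^0$, the isomorphism $\eta^0_{\Gamma M}$, and monicity from Proposition~\ref{prop:Dtorsionless}; the surjectivity of $\Gamma^3M\to\Gamma^3K$, which you assert without proof, is indeed immediate since $H^1\R\Delta^2(\Gamma N)=0$ by Proposition~\ref{prop:cottheorem}. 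Where you genuinely diverge is the endgame. The paper pushes the second ladder further: from $\eta^0_{\Gamma M}$ and $\eta^0_{\Gamma K}$ isomorphisms it deduces that $\eta^0_{\Gamma N}$ is an isomorphism, so $\Gamma N$ is a $\D$-reflexive $\Delta$-torsion module, whence $\Delta\Gamma^3N=0$ by Proposition~\ref{lemma:Grifl=rifl}; combined with $\Delta\Gamma N=0$, Proposition~\ref{prop:Dtorsionless} makes $\eta^0_N$ monic, and it is already epic from the first ladder; $\D$-reflexivity of $K$ is then recovered a posteriori from the forward implication. You instead settle $K$ first: $\eta^0_K$ is monic as a restriction of $\eta^0_M$ (using $\Delta\Gamma N=0$ to make the bottom row left exact), and you kill $\Coker\eta^0_K$ using $\Delta\Gamma^2K=0$ together with the splitting $\Gamma(\eta^0_K)\circ\eta^0_{\Gamma K}=1_{\Gamma K}$ of Lemma~\ref{prop:key1}, (4) --- precisely the cokernel-killing move inside the proof of Theorem~\ref{teo:fond1}, which you correctly identify as the model --- and then transfer to $\eta^0_N$ by the snake lemma. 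Your route yields a direct proof that $\Ker p$ is $\D$-reflexive and never needs the higher orthogonality of $N$; the paper's route produces $\Delta\Gamma^2N=0$ and $\Delta\Gamma^3N=0$ explicitly along the way, exhibiting the $\infty$-orthogonality of $N$. One cosmetic slip: the isomorphism $\eta^0_M$ comes from $\D$-reflexivity of $M$ via Proposition~\ref{prop:cottheorem}, not from Proposition~\ref{lemma:Grifl=rifl}, which supplies only the statements about the $\Gamma^iM$; this does not affect the argument.
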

\begin{proof}
Consider the commutative diagram with exact rows
\[\xymatrix@-1pc{&&&0\ar[r]&K=\Ker p\ar[d]^-{\eta^0_K}\ar[r]&M\ar[d]^-{\eta^0_M}_-\cong\ar[r]^p&N\ar[d]^-{\eta^0_N}\ar[r]&0\\
0\ar[r]&\Delta\Gamma K\ar[r]&\Delta\Gamma M=0\ar[r]&\Delta\Gamma N\ar[r]&\Gamma^2 K\ar[r]&\Gamma^2 M\ar[r]&\Gamma^2 N\ar[r]&0}\]
If $N$ is $\D$-reflexive, then $\Delta\Gamma N=0$ by definition and $K$ is $\D$-reflexive too. Thus $K$ and hence $\Rej_UK$ (see Proposition~\ref{prop:iffrej}) are $\D$-reflexive; then $\Delta\Gamma^2 K=\Delta\Gamma^2\Rej_U K=0$ by Proposition~\ref{lemma:Grifl=rifl}.
Let us see the converse. Since $\eta^0_M$ is an isomorphism, then $\eta^0_N$ is an epimorphism. Let us prove that $\Delta\Gamma^3 N=0$; thus, since $\Delta\Gamma N=0$ by hypothesis, we conclude by Proposition~\ref{prop:Dtorsionless} that $\eta^0_N$ is also a monomorphism and hence an isomorphism. Applying $\Delta$ to $\xymatrix{0\ar[r]&K\ar[r]&M\ar[r]&N\ar[r]&0}$, we get the exact sequence $\xymatrix{0\ar[r]&\Gamma N\ar[r]&\Gamma M\ar[r]&\Gamma K\ar[r]&0}$. From the following commutative diagram with exact rows
\[\xymatrix@-1pc{&&&0\ar[r]&\Gamma N\ar[d]^-{\eta^0_{\Gamma N}}\ar[r]&\Gamma M\ar[d]^-{\eta^0_{\Gamma M}}_-\cong\ar[r]&\Gamma K\ar[d]^-{\eta^0_{\Gamma K}}\ar[r]&0\\
0\ar[r]&\Delta\Gamma^2 N\ar[r]&\Delta\Gamma^2 M=0\ar[r]&\Delta\Gamma^2 K=0\ar[r]&\Gamma^3 N\ar[r]&\Gamma^3 M\ar[r]&\Gamma^3 K\ar[r]&0}\]
we obtain $\Delta\Gamma^2 N=0$. Moreover, by Proposition~\ref{prop:Dtorsionless}, the maps $\eta^0_{\Gamma K}$ and $\eta^0_{\Gamma N}$ are monomorphisms. Since $\eta^0_{\Gamma M}$ is an isomorphism, then $\eta^0_{\Gamma K}$ is an epimorphism and then both 
$\eta^0_{\Gamma K}$ and $\eta^0_{\Gamma N}$ are isomorphisms and hence $\Gamma N$ is a $\D$-reflexive $\Delta$-torsion module. By Proposition~\ref{lemma:Grifl=rifl} we conclude $\Delta\Gamma^3 N=0$.
\end{proof}

\section{Conclusions}
In this final part we collect the results proved in the previous sections in an unified version which permits us to characterize the $\D$-reflexive complexes.

\begin{definition}\label{def:lincompgen}
Let $_RU_S$ be a 1-cotilting bimodule.
A module $M$ is \textbf{$U$-linearly compact} if for each inverse system of epimorphisms $(M\stackrel{p_\lambda}{\to} M_\lambda)_{\lambda\in\Lambda}$ such that
\begin{enumerate}
\item $p_\lambda(\Rej_U M)=\Rej_U M_\lambda$,
\item $\Ker p_\lambda\cap \Rej_U M$ are $2$-orthogonal $\Delta$-torsion modules,
\end{enumerate}
the inverse limit $\varprojlim p_\lambda: M\to \varprojlim M_\lambda$ is an epimorphism.
\end{definition}

It is clear that if $_RU_S$ is a Morita bimodule, then the $U$-linearly compact modules are exactly the usual linearly compact modules, since $\Rej_U M=0$ for each module $M$.

\begin{proposition}\label{prop:compattezze}
A module $M$ is $U$-linearly compact if $\Rej_U M$ is $\Gamma$-linearly compact, and $M/\Rej_U M$ is $\Delta$-linearly compact. If $M$ is 2-orthogonal also the converse holds.
\end{proposition}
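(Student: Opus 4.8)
The plan is to reduce everything to the $\varprojlim^{(1)}$-criterion already exploited in Proposition~\ref{prop:lincomp}. Write $T:=\Rej_U M$ and $F:=M/\Rej_U M$, so that $0\to T\to M\to F\to 0$ is the sequence attached to the torsion pair $(\Ker\Delta,\Ker\Gamma)$, with $T$ a $\Delta$-torsion module and $F\in\Cogen U$. For any inverse system of epimorphisms $(N\stackrel{p_\lambda}{\to}L_\lambda)$ the constant system $(N)_\lambda$ is weakly flabby \cite[Theorem~1.8]{Jen72}, so $\varprojlim^{(1)}N=0$ and $\varprojlim p_\lambda$ is epic precisely when $\varprojlim^{(1)}\Ker p_\lambda=0$. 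I would therefore phrase both implications as the vanishing of $\varprojlim^{(1)}$ of a family of kernels, splitting those kernels along the torsion pair by the snake lemma and controlling the two pieces separately through the $\Gamma$-linear compactness of $T$ and the $\Delta$-linear compactness of $F$.

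For the sufficiency I would start from a system $(M\stackrel{p_\lambda}{\to}M_\lambda)$ obeying conditions (1) and (2) of Definition~\ref{def:lincompgen}. Restricting $p_\lambda$ to $T$ and passing to torsionless quotients yields, by naturality of the reject, induced maps $t_\lambda:T\to\Rej_U M_\lambda$ and $s_\lambda:F\to M_\lambda/\Rej_U M_\lambda$; condition (1) makes $t_\lambda$, and then $s_\lambda$, surjective, and the snake lemma gives short exact sequences $0\to\Ker t_\lambda\to\Ker p_\lambda\to\Ker s_\lambda\to 0$. Condition (2) says exactly that $\Ker t_\lambda=\Ker p_\lambda\cap\Rej_U M$ is $2$-orthogonal $\Delta$-torsion, so the $\Gamma$-linear compactness of $T$ forces $\varprojlim t_\lambda$ epic, i.e. $\varprojlim^{(1)}\Ker t_\lambda=0$; the targets $M_\lambda/\Rej_U M_\lambda$ lie in $\Cogen U$, so the $\Delta$-linear compactness of $F$ forces $\varprojlim s_\lambda$ epic, i.e. $\varprojlim^{(1)}\Ker s_\lambda=0$. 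Plugging the short exact sequences into the six-term sequence of derived limits and using exactness at the middle term (which holds regardless of the higher $\varprojlim^{(i)}$), I conclude $\varprojlim^{(1)}\Ker p_\lambda=0$, hence $\varprojlim p_\lambda$ is epic and $M$ is $U$-linearly compact.

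For the converse I would first record that $2$-orthogonality passes to $T$: applying $\Delta=\Hom(-,U)$ and $\Gamma=\Ext^1(-,U)$ to $0\to T\to M\to F\to 0$ and using $\Delta T=0$, $\Gamma F=0$ and $\injdim U\le 1$ (so $\Ext^2(-,U)=0$) gives $\Gamma M\cong\Gamma T$, hence $\Gamma^iM\cong\Gamma^iT$ for $i\ge1$, and in particular $T$ is $2$-orthogonal. To see that $T$ is $\Gamma$-linearly compact, given epimorphisms $q_\lambda:T\to L_\lambda$ with $2$-orthogonal $\Delta$-torsion kernels I would set $M_\lambda:=M/\Ker q_\lambda$ and let $p_\lambda:M\tra M_\lambda$ be the projection; since $\Ker q_\lambda\subseteq T$ one identifies $\Rej_U M_\lambda=T/\Ker q_\lambda$, so (1) and (2) hold, the $U$-linear compactness of $M$ gives $\varprojlim^{(1)}\Ker p_\lambda=0$, and $\Ker p_\lambda=\Ker q_\lambda$ turns this into $\varprojlim q_\lambda$ epic. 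To see that $F$ is $\Delta$-linearly compact, given epimorphisms $r_\lambda:F\to L_\lambda$ with $L_\lambda\in\Cogen U$ I would take $p_\lambda:=r_\lambda\circ(M\tra F)$; then $\Rej_U L_\lambda=0$ and $T\subseteq\Ker p_\lambda$, so $\Ker p_\lambda\cap\Rej_U M=T$ is $2$-orthogonal $\Delta$-torsion, conditions (1) and (2) again hold, $U$-linear compactness yields $\varprojlim p_\lambda$ epic, and since that map factors through the projection onto $F$ it follows that $\varprojlim r_\lambda$ is epic.

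The bookkeeping---that the auxiliary families are genuine inverse systems of epimorphisms, and that $\Rej_U M_\lambda=T/\Ker q_\lambda$---is routine. The step I expect to be the main obstacle is the correct matching of hypotheses: one must recognize that condition (2) of $U$-linear compactness is \emph{exactly} the kernel condition appearing in the definition of $\Gamma$-linear compactness, and, crucially, that when testing $\Delta$-linear compactness of $F$ the intersection $\Ker p_\lambda\cap\Rej_U M$ collapses to all of $T$, so verifying condition (2) forces $T$ to be $2$-orthogonal. This is the sole place where the hypothesis `$M$ is $2$-orthogonal' enters, through the isomorphisms $\Gamma^iM\cong\Gamma^iT$; without it the converse direction for $F$ breaks down. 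Everything else is the $\varprojlim^{(1)}$ machinery already developed for Proposition~\ref{prop:lincomp}.
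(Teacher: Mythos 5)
Your proposal is correct and takes essentially the same route as the paper: both directions split along the torsion pair via the snake lemma; your $M/\Ker q_\lambda$ is exactly the paper's pushout of $q_\lambda$ along $\Rej_U M\hookrightarrow M$; the torsionless-part converse composes with the projection $M\twoheadrightarrow M/\Rej_U M$; and the hypothesis that $M$ is $2$-orthogonal enters in the same single spot, namely to make $\Ker p_\lambda\cap\Rej_U M=\Rej_U M$ a $2$-orthogonal $\Delta$-torsion module via $\Gamma^i M\cong\Gamma^i\Rej_U M$. The only cosmetic difference is that you run all the limit arguments uniformly through the $\varprojlim^{(1)}$ six-term sequences (the machinery of Proposition~\ref{prop:lincomp}), whereas the paper chases the inverse-limit diagrams directly with the snake lemma.
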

\begin{proof}
Consider an inverse system of epimorphisms $(M\stackrel{p_\lambda}{\to} M_\lambda)_{\lambda\in\Lambda}$ satisfying conditions (1), (2) in Definition~\ref {def:lincompgen}. Set $K_\lambda=\Ker p_\lambda$, we have the following commutative diagram with exact rows and columns
\[\xymatrix{
&0\ar[d]&0\ar[d]&0\ar[d]\\
0\ar[r]&K_\lambda\cap \Rej_U M\ar[d]\ar[r]& K_\lambda\ar[d]\ar[r]&K_\lambda/K_\lambda\cap \Rej_U M\ar[r]\ar[d]& 0\\
0\ar[r]&\Rej_U M\ar[d]^{n_\lambda}\ar[r]& M\ar[d]^{p_\lambda}\ar[r]&M/\Rej_U M\ar[r]\ar[d]^{q_\lambda}& 0\\
0\ar[r]&\Rej_U M_\lambda\ar[r]\ar[d]& M_\lambda \ar[r]\ar[d]&M_\lambda/\Rej_U M_\lambda\ar[r]\ar[d]& 0\\
&0&0&0}\]
Since $K_\lambda\cap \Rej_U M$ are $2$-orthogonal $\Delta$-torsion modules and $\Rej_U M$ is $\Gamma$-linearly compact, then $\varprojlim n_\lambda$ is surjective. Since $M_\lambda/\Rej_U M_\lambda$ is $\Delta$-torsionless and $M/\Rej_U M$ is $\Delta$-linearly compact, then $\varprojlim q_\lambda$ is surjective.
Taking the inverse limit of this diagram we get the following commutative diagram with exact rows and columns
\[\xymatrix{
&0\ar[d]&0\ar[d]&0\ar[d]\\
0\ar[r]&\varprojlim K_\lambda\cap \Rej_U M\ar[d]\ar[r]& \varprojlim K_\lambda\ar[d]\ar[r]&\varprojlim  K_\lambda/K_\lambda\cap \Rej_U M\ar[d]&\\
0\ar[r]&\Rej_U M\ar[d]^{\varprojlim n_\lambda}\ar[r]^\alpha& M\ar[d]^{\varprojlim p_\lambda}\ar[r]^\beta&M/\Rej_U M\ar[r]\ar[d]^{\varprojlim q_\lambda}& 0\\
0\ar[r]&\varprojlim \Rej_U M_\lambda\ar[r]^\theta\ar[d]& \varprojlim M_\lambda \ar[r]^-\xi&\varprojlim M_\lambda/\Rej_U M_\lambda\ar[d]\\
&0&&0}\]
By the Snake Lemma we conclude that 
$\varprojlim p_\lambda$ is surjective and hence $M$ is $U$-linearly compact.\\
Let $M$ be a $U$-linearly compact module; let us prove that $\Rej_U M$ is $\Gamma$-linearly compact and $M/ \Rej_U M$ is $\Delta$-linearly compact. Consider an inverse system of epimorphisms $(\Rej_U M\stackrel{n_\lambda}{\to} N_\lambda)_{\lambda\in\Lambda}$ where the kernels $\Ker n_\lambda$ are $2$-orthogonal $\Delta$-torsion modules. Then we have the following commutative diagram
\[\xymatrix{
&0\ar[d]&0\ar[d]\\
0\ar[r]&\Ker n_\lambda\ar[d]\ar@{=}[r]& \Ker p_\lambda\ar[d]\\
0\ar[r]&\Rej_U M\ar[d]^{n_\lambda}\ar[r]^\iota& M\ar[d]^{p_\lambda}\ar[r]&M/\Rej_U M\ar[r]\ar@{=}[d]& 0\\
0\ar[r]&N_\lambda\ar[r]\ar[d]& M_\lambda \ar[r]\ar[d]&M/\Rej_U M\ar[r]& 0\\
&0&0
}\]
where $M_\lambda$ is the pushout of $n_\lambda$ and $\iota$. Since $N_\lambda$ is $\Delta$-torsion and $M/\Rej_U M$ is $\Delta$-torsionless we have
\[\Rej_U M_\lambda=N_\lambda=p_\lambda(\Rej_U M);\]
next $\Ker p_\lambda\cap \Rej_U M=\Ker p_\lambda=\Ker n_\lambda$ are $2$-orthogonal $\Delta$-torsion modules. Therefore, since $M$ is $U$-linearly compact, $\varprojlim p_\lambda$ is an epimorphism. We have the following commutative diagram
\[\xymatrix{
0\ar[r]&\Rej_U M\ar[d]^{\varprojlim n_\lambda}\ar[r]^\iota& M\ar[d]^{\varprojlim p_\lambda}\ar[r]&M/\Rej_U M\ar[r]\ar@{=}[d]& 0\\
0\ar[r]&\varprojlim N_\lambda\ar[r]^\theta& \varprojlim M_\lambda \ar[r]^-\xi\ar[d]&M/\Rej_U M\\
&&0
}\]
By the Snake Lemma, 
%
$\varprojlim n_\lambda$ is an epimorphism too, and hence $\Rej_U M$ is $\Gamma$-linearly compact.\\
Consider now an inverse system of epimorphisms $(M/\Rej_U M\stackrel{q_\lambda}{\to} Q_\lambda)_{\lambda\in\Lambda}$ where the $Q_\lambda$'s are $U$-torsionless. Then we have the following commutative diagram
\[\xymatrix{
&&0\ar[d]&0\ar[d]\\
0\ar[r]&\Rej_U M\ar@{=}[d]\ar[r]&K_\lambda\ar[d]\ar[r]& \Ker q_\lambda\ar[d]^j\ar[r]&0\\
0\ar[r]&\Rej_U M\ar[r]^\iota& M\ar[d]^{p_\lambda}\ar[r]^-\pi&M/\Rej_U M\ar[r]\ar[d]^{q_\lambda}& 0\\
&& Q_\lambda \ar@{=}[r]\ar[d]&Q_\lambda\ar[d]\\
&&0&0
}\]
where $K_\lambda$ is the pullback of $\pi$ and $j$. Since
\[\Ker p_\lambda\cap\Rej_U M=K_\lambda\cap\Rej_U M=\Rej_U M,\]
$\Delta\Gamma \Rej_U M=\Delta\Gamma M=0$, and $\Delta\Gamma^2 \Rej_U M=\Delta\Gamma^2 M=0$, the modules $\Ker p_\lambda\cap\Rej_U M$ are $2$-orthogonal and $\Delta$-torsion. Moreover $p_\lambda\Rej_U M=0=\Rej_U Q_\lambda$. Therefore, since $M$ is $U$-linearly compact, $\varprojlim p_\lambda$ is an epimorphism. From the following commutative diagram
\[\xymatrix{
M\ar[r]^-\pi\ar[d]^{\varprojlim p_\lambda}&M/\Rej_U M\ar[r]\ar[d]^{\varprojlim q_\lambda}&0\\
{}\varprojlim Q_\lambda\ar@{=}[r]\ar[d]&\varprojlim Q_\lambda\\
0
}\]
we have that $\varprojlim q_\lambda$ is an epimorphism too, and  hence $M/\Rej_U M$ is $\Delta$-linearly compact.
\end{proof}
The notion of $U$-linear compactness is closed under suitable epimorphisms.
\begin{lemma}
Let $_RU_S$ be a cotilting bimodule and $p:M\to N$ an epimorphism such that $p(\Rej_UM)=\Rej_U N$ and $\Ker p\cap\Rej M$ is a $2$-orthogonal $\Delta$-torsion module. Then, if $M$ is $U$-linearly compact, also $N$ is $U$-linearly compact.
\end{lemma}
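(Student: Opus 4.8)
The plan is to reduce the $U$-linear compactness of $N$ to that of $M$ by pulling back inverse systems along $p$. Concretely, I would start with an arbitrary inverse system of epimorphisms $(N\stackrel{q_\lambda}{\to}N_\lambda)_{\lambda\in\Lambda}$ satisfying conditions (1) and (2) of Definition~\ref{def:lincompgen}, and form the composite system $(M\stackrel{r_\lambda}{\to}N_\lambda)_{\lambda\in\Lambda}$ with $r_\lambda:=q_\lambda\circ p$. Each $r_\lambda$ is an epimorphism, being a composite of epimorphisms, and these maps are compatible with the transition maps of the original system, so this is again an inverse system of epimorphisms indexed by $\Lambda$. Since $\varprojlim r_\lambda=(\varprojlim q_\lambda)\circ p$ and $p$ is surjective, once I know $\varprojlim r_\lambda$ is an epimorphism the surjectivity of $\varprojlim q_\lambda$ follows immediately; so it suffices to check that the composite system satisfies hypotheses (1) and (2) and then invoke the $U$-linear compactness of $M$.

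Condition (1) for $(r_\lambda)$ is immediate: $r_\lambda(\Rej_U M)=q_\lambda(p(\Rej_U M))=q_\lambda(\Rej_U N)=\Rej_U N_\lambda$, using first the hypothesis $p(\Rej_U M)=\Rej_U N$ and then condition (1) for $(q_\lambda)$. For condition (2) I would identify $\Ker r_\lambda\cap\Rej_U M$ as the preimage under the restriction $p|_{\Rej_U M}$ of $\Ker q_\lambda\cap\Rej_U N$; since $p$ restricts to a surjection $\Rej_U M\to\Rej_U N$ with kernel $\Ker p\cap\Rej_U M$, this yields a short exact sequence
\[0\to \Ker p\cap\Rej_U M\to \Ker r_\lambda\cap\Rej_U M\to \Ker q_\lambda\cap\Rej_U N\to 0.\]
The two outer terms are $2$-orthogonal $\Delta$-torsion modules---the left one by the hypothesis of the lemma, the right one by condition (2) for $(q_\lambda)$---so the task reduces to showing that the middle term inherits both properties.

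The $\Delta$-torsion part is automatic, since $\Ker\Delta$ is a torsion class and hence closed under extensions. The $2$-orthogonality is the step I expect to be the main (though routine) obstacle, and it is handled purely by the cohomology long exact sequences of the contravariant functors $\Delta=\Hom(-,U)$ and $\Gamma=\Ext^1(-,U)$, using $\injdim U\leq 1$. Writing the displayed sequence as $0\to A\to B\to C\to 0$ with $A,C$ being $2$-orthogonal $\Delta$-torsion and using $\Delta A=\Delta B=\Delta C=0$, the long exact sequence first gives $0\to \Gamma C\to \Gamma B\to \Gamma A\to 0$; applying $\Delta,\Gamma$ to this and using $\Delta\Gamma A=\Delta\Gamma C=0$ yields $\Delta\Gamma B=0$ together with $0\to\Gamma^2 A\to\Gamma^2 B\to\Gamma^2 C\to 0$; applying $\Delta$ once more and using $\Delta\Gamma^2 A=\Delta\Gamma^2 C=0$ gives $\Delta\Gamma^2 B=0$. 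Hence $B=\Ker r_\lambda\cap\Rej_U M$ is $2$-orthogonal and $\Delta$-torsion, so condition (2) holds for $(r_\lambda)$. The $U$-linear compactness of $M$ then forces $\varprojlim r_\lambda=(\varprojlim q_\lambda)\circ p$ to be an epimorphism, whence $\varprojlim q_\lambda$ is an epimorphism and $N$ is $U$-linearly compact.
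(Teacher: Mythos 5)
Your proof is correct and follows essentially the same route as the paper's: both compose the given inverse system with $p$, derive the short exact sequence $0\to\Ker p\cap\Rej_U M\to\Ker(q_\lambda\circ p)\cap\Rej_U M\to\Ker q_\lambda\cap\Rej_U N\to 0$ (the paper via the snake lemma), and conclude from $\varprojlim(q_\lambda\circ p)=(\varprojlim q_\lambda)\circ p$. The only difference is that you spell out, via the long exact sequences for $\Delta$ and $\Gamma$, the closure of $2$-orthogonal $\Delta$-torsion modules under extensions, a step the paper asserts without detail.
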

\begin{proof}
Let $(N\stackrel{p_\lambda}{\to} N_\lambda)_\lambda$ be an inverse system of epimorphisms such that $\Ker p_\lambda\cap \Rej_UN$ are $2$-orthogonal $\Delta$-torsion modules and $p_\lambda(\Rej N)=\Rej N_\lambda$. Applying the snake lemma to the diagram
\[\xymatrix{&0\ar[r]\ar[d]&M\ar[d]_p\ar@{=}[r]&M\ar[d]^{p_\lambda\circ p}\ar[r]&0\\
0\ar[r]&\Ker p_\lambda\ar[r]&N\ar[r]^{p_\lambda}&N_\lambda\ar[r]&0
}\]
we get the short exact sequence
\[0\to\Ker p\to \Ker (p\circ p_\lambda)\to\Ker p_\lambda\to 0
\]
and hence, since $p(\Rej_UM)=\Rej_U N$,  the short exact sequence
\[0\to\Ker p\cap\Rej_UM\to \Ker (p_\lambda\circ p)\cap\Rej_UM\to\Ker p_\lambda\cap\Rej_UN\to 0\]
Then $\Ker (p_\lambda\circ p)\cap\Rej_UM$ are $2$-orthogonal $\Delta$-torsion modules, $p_\lambda\circ p(\Rej_UM)=\Rej_UN_\lambda$ and hence $\varprojlim (p_\lambda\circ p)$ is surjective. Since $\varprojlim (p_\lambda\circ p)=\varprojlim p_\lambda\circ p$, also $\varprojlim p_\lambda$ is an epimorphism.
\end{proof}

\begin{theorem}\label{teo:fine}
Let $_RU_S$ be a faithfully balanced 1-cotilting bimodule. Then a module is $\D$-reflexive if and only if it is $U$-linearly compact, $\Delta$-dense and $3$-orthogonal.
\end{theorem}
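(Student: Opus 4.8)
The plan is to split $M$ along the torsion pair $(\Ker\Delta,\Ker\Gamma)$ into its $\Delta$-torsion part $\Rej_U M$ and its $\Delta$-torsionless part $M/\Rej_U M$, and to test each of the three hypotheses separately on the two pieces, using Proposition~\ref{prop:iffrej} as the backbone: $M$ is $\D$-reflexive if and only if both $\Rej_U M$ and $M/\Rej_U M$ are $\D$-reflexive. The $\Delta$-torsionless piece will be governed by the Colby--Fuller criterion (Theorem~\ref{teo:deltariflessivo}), the $\Delta$-torsion piece by Corollary~\ref{cor:fond}, and the linear compactness hypothesis by Proposition~\ref{prop:compattezze}.

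Before splitting I would record two elementary gluing facts that let each hypothesis migrate between $M$ and its two parts. First, applying $\Delta$ and $\Gamma$ to $0\to\Rej_U M\to M\to M/\Rej_U M\to 0$ and using $\Delta(\Rej_U M)=0$, $\Gamma(M/\Rej_U M)=0$ (as $M/\Rej_U M\in\Cogen U=\Ker\Gamma$) together with $\injdim U\le 1$, the inclusion induces an isomorphism $\Gamma M\cong\Gamma(\Rej_U M)$, hence $\Gamma^i M\cong\Gamma^i(\Rej_U M)$ for all $i\ge 1$; consequently $M$ is $3$-orthogonal if and only if $\Rej_U M$ is $3$-orthogonal. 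Second, every morphism $\alpha\colon M\to {}_RU^k$ annihilates $\Rej_U M$ and so factors through $M/\Rej_U M$ with the same cokernel; since the bimodule is faithfully balanced we have $S=\End{}_RU$, so Theorem~\ref{thm:densita} applies and this factorization shows $M$ is $\Delta$-dense if and only if $M/\Rej_U M$ is $\Delta$-dense.

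For the forward implication, assume $M$ is $\D$-reflexive, so by Proposition~\ref{prop:iffrej} both $\Rej_U M$ and $M/\Rej_U M$ are $\D$-reflexive. Since $\Rej_U M$ is $\Delta$-torsion and $\D$-reflexive, Proposition~\ref{lemma:Grifl=rifl} makes it $\infty$-orthogonal, whence by the first gluing fact $M$ is $3$-orthogonal, while Theorem~\ref{teo:fond1} gives that $\Rej_U M$ is $\Gamma$-linearly compact. Since $M/\Rej_U M$ is $\Delta$-torsionless and $\D$-reflexive, Theorem~\ref{teo:deltariflessivo} makes it $\Delta$-dense and $\Delta$-linearly compact; the second gluing fact then yields $\Delta$-density of $M$, and the unconditional direction of Proposition~\ref{prop:compattezze} (applied to $\Rej_U M$ $\Gamma$-linearly compact and $M/\Rej_U M$ $\Delta$-linearly compact) yields $U$-linear compactness of $M$.

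For the converse, assume $M$ is $U$-linearly compact, $\Delta$-dense and $3$-orthogonal; in particular $M$ is $2$-orthogonal, so the conditional direction of Proposition~\ref{prop:compattezze} applies and gives that $\Rej_U M$ is $\Gamma$-linearly compact and $M/\Rej_U M$ is $\Delta$-linearly compact. By the first gluing fact $\Rej_U M$ is $3$-orthogonal, and being $\Delta$-torsion and $\Gamma$-linearly compact it is $\D$-reflexive by Corollary~\ref{cor:fond}; by the second gluing fact $M/\Rej_U M$ is $\Delta$-dense, and being $\Delta$-torsionless, $\Delta$-dense and $\Delta$-linearly compact it is $\D$-reflexive by Theorem~\ref{teo:deltariflessivo}. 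Proposition~\ref{prop:iffrej} then assembles the two pieces to conclude that $M$ is $\D$-reflexive. The main obstacle is not any single deep step but confirming that all three hypotheses \emph{decouple} cleanly along the torsion pair; the two delicate points are precisely the isomorphism $\Gamma M\cong\Gamma(\Rej_U M)$ and the cokernel-preserving factorization through $M/\Rej_U M$, which are exactly what let the asymmetric $\Gamma$-side conditions be tested on the torsion summand and the symmetric $\Delta$-side conditions on the torsionless summand.
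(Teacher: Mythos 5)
Your proof is correct and follows essentially the same route as the paper: decomposing $M$ along the torsion pair via Proposition~\ref{prop:iffrej}, handling $M/\Rej_U M$ with Theorem~\ref{teo:deltariflessivo}, handling $\Rej_U M$ with Theorem~\ref{teo:fond1}, Proposition~\ref{lemma:Grifl=rifl} and Corollary~\ref{cor:fond}, and transferring linear compactness via Proposition~\ref{prop:compattezze}. Your two ``gluing facts'' ($\Gamma^i M\cong\Gamma^i(\Rej_U M)$ for $i\geq 1$, and the cokernel-preserving factorization of maps $M\to U^k$ through $M/\Rej_U M$) are exactly the steps the paper leaves implicit, and making them explicit is a sound refinement rather than a deviation.
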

\begin{proof}
By Proposition~\ref{prop:iffrej}, a module $M$ is $\D$-reflexive if and only if both $\Rej_U M$ and $M/\Rej_U M$ are $\D$-reflexive. Assume  $M$ is $\D$-reflexive. By Propositions~\ref{prop:iffrej}, \ref{lemma:Grifl=rifl} and Theorems~\ref{teo:fond1}, \ref{teo:deltariflessivo} $\Gamma^i M=\Gamma^i\Rej_UM$ are $\Delta$-torsion for each $i\geq 1$, $\Rej_U M$ is $\Gamma$-linearly compact and $M/\Rej_U M$ is $\Delta$-linearly compact and $\Delta$-dense in $\Delta^2 (M/\Rej_U M)=\Delta^2 M$. Therefore, by Proposition~\ref{prop:compattezze}, $M$ is $U$-linearly compact and $\Delta$-dense. Conversely, assume $M$ is $U$-linearly compact, $\Delta$-dense and $3$-orthogonal. By Proposition~\ref{prop:compattezze}, $\Rej_U M$ is $\Gamma$-linearly compact and $M/\Rej_U M$ is $\Delta$-linearly compact and $\Delta$-dense. Then $M/\Rej_U M$ is $\D$-reflexive by Theorem~\ref{teo:deltariflessivo} and $\Rej_U M$ is $\D$-reflexive by Corollary~\ref{cor:fond}; therefore $M$ is $\D$-reflexive by Proposition~\ref{prop:iffrej}.
\end{proof}
Let us denote by \textbf{$_RU$-LCD3} (resp. \textbf{$U_S$-LCD3}) the subcategory of $U$-linearly compact, $\Delta$-dense and $3$-orthogonal left $R$- (reap. right $S$-) modules. Observe that by Theorem~\ref{teo:fine} and Proposition~\ref{lemma:Grifl=rifl} the modules in $U$-LCD3 are $\infty$-orthogonal. 

\begin{corollary}\label{cor:complessiriflessivi}
The total derived functors $\R\Hom_R(-, U)$ and $\R\Hom_S(-, U)$ associated to a 1-cotilting bimodule $_RU_S$ define a duality between the categories of complexes of left $R$- and right $S$- modules with cohomologies in $_RU$-LCD3 and $U_S$-LCD3:
\[\R\Hom_R(-,U):\D_{\text{$_RU$-LCD3}}(R)\dualita{}{} \D_{\text{$_SU$-LCD3}}(S):\R\Hom_S(-,U).\]
\end{corollary}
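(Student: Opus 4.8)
The plan is to recognize this corollary as a formal assembly of the two main theorems, once the two notions of reflexivity are matched up. First I would use Theorem~\ref{teo:fine} to identify the class $_RU$-LCD3 of $U$-linearly compact, $\Delta$-dense, $3$-orthogonal left $R$-modules with the class of $\D$-reflexive left $R$-modules, and likewise $U_S$-LCD3 with the $\D$-reflexive right $S$-modules. Then Theorem~\ref{two:coomologie} tells us that a complex lies in $\D_{\text{$_RU$-LCD3}}(R)$, i.e.\ has all its cohomologies in $_RU$-LCD3, exactly when it is $\D$-reflexive; the same holds on the $S$-side. Thus the statement to be proved is equivalent to the assertion that $\R\Delta_R$ and $\R\Delta_S$ restrict to a duality between the full subcategories of $\D$-reflexive complexes of $\D(R)$ and $\D(S)$.

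Next I would verify that these functors are well defined on the relevant subcategories, i.e.\ that they preserve $\D$-reflexivity. This is already recorded in the preliminaries: from the triangle identity $\R\Delta(\eta_{X^\bullet})\circ\eta_{\R\Delta(X^\bullet)}=1_{\R\Delta(X^\bullet)}$, if $\eta_{X^\bullet}$ is an isomorphism then $\R\Delta(\eta_{X^\bullet})$ is an isomorphism, whence $\eta_{\R\Delta(X^\bullet)}$ is an isomorphism as well. Therefore $\R\Delta_R$ sends $\D_{\text{$_RU$-LCD3}}(R)$ into $\D_{\text{$U_S$-LCD3}}(S)$ and, symmetrically, $\R\Delta_S$ sends $\D_{\text{$U_S$-LCD3}}(S)$ into $\D_{\text{$_RU$-LCD3}}(R)$.

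Finally, to conclude that the restricted functors form a duality, I would appeal to the adjunction between $\R\Delta_R$ and $\R\Delta_S$ given by the units $\eta$. By the very definition of $\D$-reflexivity, $\eta_{X^\bullet}\colon X^\bullet\to(\R\Delta)^2 X^\bullet$ is an isomorphism for every object of $\D_{\text{$_RU$-LCD3}}(R)$, and likewise $\eta_{Y^\bullet}$ is an isomorphism for every object of $\D_{\text{$U_S$-LCD3}}(S)$. Hence the natural transformations $\eta$ restrict to natural isomorphisms $\text{id}\xrightarrow{\sim}\R\Delta_S\R\Delta_R$ and $\text{id}\xrightarrow{\sim}\R\Delta_R\R\Delta_S$ on the two subcategories, which is precisely the statement that $(\R\Delta_R,\R\Delta_S)$ is a duality between $\D_{\text{$_RU$-LCD3}}(R)$ and $\D_{\text{$U_S$-LCD3}}(S)$.

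Since all the substantive work has been carried out in Theorems~\ref{two:coomologie} and~\ref{teo:fine}, I do not expect a genuine obstacle here; the argument is pure bookkeeping of the two characterizations of reflexivity. The only step meriting attention is the well-definedness of the functors on the subcategories, namely the preservation of $\D$-reflexivity, and this follows at once from the triangle identity as above.
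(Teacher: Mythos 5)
Your proposal is correct and coincides with the paper's intended argument: the paper states the corollary without proof precisely because it is the bookkeeping you carry out, namely Theorem~\ref{teo:fine} identifying the LCD3 classes with the $\D$-reflexive modules (note the bimodule must be faithfully balanced, as in Theorem~\ref{teo:fine}, a hypothesis the corollary's statement leaves implicit), Theorem~\ref{two:coomologie} transferring this to complexes via their cohomologies, and the adjunction units $\eta$ together with the triangle identity $\R\Delta(\eta_{X^\bullet})\circ\eta_{\R\Delta(X^\bullet)}=1_{\R\Delta(X^\bullet)}$ recorded in the preliminaries yielding that the restricted functors are mutually inverse dualities. There is no gap and no genuinely different route to compare.
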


\end{document}